\documentclass[11pt, letterpaper, twoside]{amsart}
   
\usepackage[
hypertexnames=false, colorlinks, 
linkcolor=black,citecolor=black,urlcolor=black, linktocpage=true
]%
{hyperref} 
\hypersetup{bookmarksdepth=3}



%
\usepackage{amssymb}
\usepackage{mathrsfs}

		  %
	\setlength{\textwidth}{15.5cm}			  %
	\setlength{\textheight}{22cm}			  %
	\setlength{\topmargin}{-.5cm}			  %
	\setlength{\oddsidemargin}{6mm}			  %
	\setlength{\evensidemargin}{6mm}		  %
	\setlength{\abovedisplayskip}{3mm}		  %
	\setlength{\belowdisplayskip}{3mm}		  %
	\setlength{\abovedisplayshortskip}{0mm}	  %
	\setlength{\belowdisplayshortskip}{2mm}	  %
	\setlength{\baselineskip}{12pt}			  %
	\setlength{\normalbaselineskip}{12pt}	  %
	\normalbaselines						  %

		  %


\renewcommand{\labelenumi}{(\roman{enumi})}

\newcommand{\shto}{\raisebox{.3ex}{$\scriptscriptstyle\rightarrow$}\!}

\newcommand{\R}{{\mathbb  R}}
\newcommand{\kf}[2]{{\begin{pmatrix} #1 \\ #2 \end{pmatrix}}}

\newcommand{\D}{{\mathbb  D}}

\newcommand{\T}{\mathbb{T}}

\newcommand{\Z}{{\mathbb  Z}}
\newcommand{\N}{{\mathbb  N}}
\newcommand{\C}{{\mathbb  C}}

\newcommand{\OZ}{{\mathbf{O}}}
\newcommand{\dd}{{d}}
\newcommand{\ID}{{\mathbf{1}}}

\newcommand{\I}{\mathbf{I}}

\newcommand{\fdot}{\,\cdot\,}

\newcommand{\wt}{\widetilde}

\newcommand{\cH}{\mathcal{H}}
\newcommand{\cD}{\mathcal{D}}
\newcommand{\cX}{\mathcal{X}}
\newcommand{\K}{\mathcal{K}}
\newcommand{\cM}{\mathcal{M}}
\newcommand{\cK}{\mathcal{K}}
\newcommand{\cU}{\mathcal{U}}
\newcommand{\cV}{\mathcal{V}}

\newcommand{\te}{\theta}
\newcommand{\f}{\varphi}

\DeclareMathOperator{\clos}{clos}

\DeclareMathOperator{\ran}{Ran}

\newcommand{\fD}{\mathfrak{D}}

\DeclareMathOperator{\rank}{rank}
\DeclareMathOperator{\Ker}{Ker}
\DeclareMathOperator{\Ran}{Ran}
\DeclareMathOperator{\spa}{\overline{span}}
\DeclareMathOperator{\im}{Im}
\DeclareMathOperator{\re}{Re}
\DeclareMathOperator{\supp}{supp}
\DeclareMathOperator{\dist}{dist}

\newcommand{\1}{\mathbf{1}}
\newcommand{\la}{\lambda}

\DeclareMathOperator{\ext}{Ext}

\newcommand{\ci}[1]{_{ {}_{\scriptstyle #1}}}
\newcommand{\ti}[1]{_{\scriptstyle \text{\rm #1}}}

\newcounter{vremennyj}

\newcommand\cond[1]{\setcounter{vremennyj}{\theenumi}\setcounter{enumi}{#1}\labelenumi\setcounter{enumi}{\thevremennyj}}


%
%
%

%
%
%

\count255\catcode`@
\catcode`@=11
\chardef\mathlig@atcode\count255

\def\actively#1#2{\begingroup\uccode`\~=`#2\relax\uppercase{\endgroup#1~}}
\def\mathlig@gobble{\afterassignment\mathlig@next@cmd\let\mathlig@next= }
\def\mathlig@delim{\mathlig@delim}
\def\mathlig@defcs#1{\expandafter\def\csname#1\endcsname}
\def\mathlig@let@cs#1#2{\expandafter\let\expandafter#1\csname#2\endcsname}
\def\mathlig@appendcs#1#2{\expandafter\edef\csname#1\endcsname{\csname#1\endcsname#2}}

\def\mathlig#1#2{\mathlig@checklig#1\mathlig@end\mathlig@defcs{mathlig@back@#1}{#2}\ignorespaces}


\def\mathlig@checklig#1#2\mathlig@end{%
 \expandafter\ifx\csname mathlig@forw@#1\endcsname\relax
 \expandafter\mathchardef\csname mathlig@back@#1\endcsname=\mathcode`#1%
 \mathcode`#1"8000\actively\def#1{\csname mathlig@look@#1\endcsname}%
 \mathlig@dolig#1\mathlig@delim
\fi
\mathlig@checksuffix#1#2\mathlig@end
}

\def\mathlig@checksuffix#1#2\mathlig@end{%
\ifx\mathlig@delim#2\mathlig@delim\relax\else\mathlig@checksuffix@{#1}#2\mathlig@end\fi
}
\def\mathlig@checksuffix@#1#2#3\mathlig@end{%
\expandafter\ifx\csname mathlig@forw@#1#2\endcsname\relax\mathlig@dosuffix{#1}{#2}\fi
\mathlig@checksuffix{#1#2}#3\mathlig@end
}


\def\mathlig@dosuffix#1#2{%
\mathlig@appendcs{mathlig@toks@#1}{#2}%
\mathlig@dolig{#1}{#2}\mathlig@delim
}


\def\mathlig@dolig#1#2\mathlig@delim{%
 \mathlig@defcs{mathlig@look@#1#2}{%
 \mathlig@let@cs\mathlig@next{mathlig@forw@#1#2}\futurelet\mathlig@next@tok\mathlig@next}%
 \mathlig@defcs{mathlig@forw@#1#2}{%
  \mathlig@let@cs\mathlig@next{mathlig@back@#1#2}%
  \mathlig@let@cs\checker{mathlig@chck@#1#2}%
  \mathlig@let@cs\mathligtoks{mathlig@toks@#1#2}%
  \expandafter\ifx\expandafter\mathlig@delim\mathligtoks\mathlig@delim\relax\else
  \expandafter\checker\mathligtoks\mathlig@delim\fi
  \mathlig@next
 }%
 \mathlig@defcs{mathlig@toks@#1#2}{}%
 \mathlig@defcs{mathlig@chck@#1#2}##1##2\mathlig@delim{%
  \ifx\mathlig@next@tok##1%
   \mathlig@let@cs\mathlig@next@cmd{mathlig@look@#1#2##1}\let\mathlig@next\mathlig@gobble
  \fi
  \ifx\mathlig@delim##2\mathlig@delim\relax\else
   \csname mathlig@chck@#1#2\endcsname##2\mathlig@delim
  \fi
 }%
%
 \ifx\mathlig@delim#2\mathlig@delim\else
  \mathlig@defcs{mathlig@back@#1#2}{\csname mathlig@back@#1\endcsname #2}%
 \fi
}%

\catcode`@\mathlig@atcode

\mathchardef\ordinarycolon\mathcode`\:
\def\vcentcolon{\mathrel{\mathop\ordinarycolon}}
\mathlig{:=}{\vcentcolon=}
\mathlig{::=}{\vcentcolon\vcentcolon=}


\numberwithin{equation}{section}

\theoremstyle{plain}
\newtheorem{theo}{Theorem}[section]
\newtheorem{cor}[theo]{Corollary}
\newtheorem{lem}[theo]{Lemma}
\newtheorem{prop}[theo]{Proposition}

\theoremstyle{definition}
\newtheorem{defn}[theo]{Definition}

\theoremstyle{remark}
\newtheorem*{ex*}{Example}
\theoremstyle{remark}
\newtheorem*{exs*}{Examples}
\theoremstyle{remark}
\newtheorem*{rem*}{Remark}
\newtheorem{rem}[theo]{Remark}
\theoremstyle{remark}
\newtheorem*{rems*}{Remarks}

\title[Clark model]{Clark model in general situation}
\author{Constanze~Liaw}
\thanks{CL is supported by the NSF grant DMS-1101477.}
\address{C.~Liaw: Department of Mathematics, Baylor University, One Bear Place \#97328,      
 Waco, TX  76798, USA}
\email{Constanze$\underline{\,\,\,}$Liaw@baylor.edu}
\author{Sergei~Treil}
\address{S.~Treil: Department of Mathematics, Brown University   
151 Thayer
Str./Box 1917,      
 Providence, RI  02912, USA}
 
\email{treil@math.brown.edu}

 \thanks{
Work of S.~Treil is supported by the National Science Foundation under the grants  DMS-0800876, DMS-1301579. Any opinions, findings and conclusions or recommendations expressed in this material are those of the author and do not necessarily reflect the views of the National Science Foundation. }

\keywords{Rank one unitary perturbations, model theory, Clark operator, normalized Cauchy transform}
 \subjclass[2010]{44A15, 47A10, 47A20, 47A55}

\begin{document}

\begin{abstract}
For a unitary operator the family of its unitary perturbations by rank one operators with fixed range is parametrized by a complex parameter $\gamma$, $|\gamma|=1$. Namely all such unitary perturbations are the operators $U_\gamma:=U+(\gamma-1) ( \fdot,   b_1 )\ci{\cH}  b$, where $b\in \cH$, $\|b\|=1$, $b_1=U^{-1} b$, $|\gamma|=1$. For $|\gamma|<1$ the operators $U_\gamma$ are contractions with one-dimensional defects. 

Restricting our attention on the non-trivial part of perturbation we assume that $b$ is a cyclic vector for $U$, i.e.~that $\cH=\spa\{U^n b : n\in\Z\}$. In this case the operator $U_\gamma$, $|\gamma|<1$ is a completely non-unitary contraction, and thus unitarily equivalent to  its functional model $\cM_\gamma$, which is the compression of the multiplication by the independent variable $z$ onto the model space $\cK_{\theta_\gamma}$, where $\theta_\gamma$ here is the characteristic function of the contraction $U_\gamma$. 

The Clark operator $\Phi_\gamma$ is a unitary operator intertwining the operator $U_\gamma$, $|\gamma|<1$ (in the spectral representation of the operator $U$) and its model $\cM_\gamma$, $\cM_\gamma \Phi_\gamma = \Phi_\gamma U_\gamma$. In the case when the spectral measure of $U$ is purely singular (equivalently, the characteristic function $\theta_\gamma$ is inner) the operator $\Phi_\gamma$ was described from a slightly different point of view by D.~Clark \cite{Clark}. The case when $\theta_\gamma$ is an extreme point of the unit ball in $H^\infty$ was treated by D.~Sarason \cite{SAR} using the 
 {sub-Hardy spaces} $\cH(\theta)$ introduced by L.~de Branges. 

In this paper we treat the general case and give a systematic presentation of the subject. We first find a formula for the adjoint operator $\Phi^*_\gamma$ which is represented by a singular integral operator, generalizing in a sense the normalized Cauchy transform studied by A.~Poltoratskii, cf \cite{NONTAN}. We first present a ``universal'' representation that works for any transcription of the functional model. We then  give the formulas adapted for specific transcriptions of the model, such as Sz.-Nagy--Foia\c{s} and the de Branges--Rovnyak transcriptions, and finally obtain the representation of  $\Phi_\gamma$. 
\end{abstract}

\maketitle

\setcounter{tocdepth}{1}
\tableofcontents

\section{Introduction: main objects and aim of the paper}\label{s-intro}
\subsection{Rank one unitary perturbations}

For a unitary operator $U$ on a separable Hilbert space consider all its rank one perturbations $U+K$, $\rank K=1$ which are unitary. Writing 
\[
U+K= ( \I + KU^{-1})U
\]
we reduce the question to description of unitary rank one perturbations of the identity, which is an easy exercise. Namely, all possible rank one unitary perturbations of the identity are described as 
\[
\I + (\gamma-1) (\fdot, b)b = \I + (\gamma -1) bb^*, \qquad b\in\cH, \ \|b\|=1, \quad \gamma \in \T;
\]
here we use the ``linear algebra'' notation where a vector $b\in \cH$ is identified with the operator $\lambda\mapsto \lambda b$ acting from $\C$ to $\cH$, and the $b^*$ is the dual of this operator, i.e.~the linear functional $b^*f = (f, b)$. We will use this notation throughout the paper.

Using this representation one can immediately see that if one fixes the range of $K$, then all rank one perturbations $U+K$ are parametrized by the scalar parameter $\gamma\in \T$, namely 
\begin{align}
\label{f-Ugamma}
U_\gamma=\bigl(\I+ (\gamma-1)bb^* \bigr)U  = U+ (\gamma-1) b b_1^*, 
\end{align}
where $b \in\Ran K$, $\| b \|\ci\cH =1$, $b_1:=U^{-1}b$.  
Note that in this notation $U=U_1$. 

Since nothing interesting happens on the orthogonal complement $\spa\{U^k b: k\in\Z\}^\perp$, we assume 
without loss of generality that $b$ is a cyclic vector for $U$, i.e.~$\cH =
{ \spa\{U^k b: k\in\Z\}}$. 
It is an easy exercise to show that in this case $b$ is a cyclic vector for all $U_\gamma$, $\gamma \in \T$.

According to the Spectral Theorem there exists a unique Borel measure on $\T$ such that
\begin{align}
\label{SpectrMeas}
(U^nb, b) = \int_\T \xi^n d\mu(\xi), \qquad \forall n\in \Z, 
\end{align}
and $U=U_1$ is unitarily equivalent to the multiplication $M_\xi$ by the independent variable  $\xi$ on $L^2(\mu)$. So without loss of generality we assume that $U=U_1$ \emph{is} the multiplication operator $M_\xi$ on $L^2(\mu)$.  Identity \eqref{SpectrMeas} implies $\mu(\T)=1$ and that in this representation
\[
b(\xi)\equiv 1, 
\]
so 
\[
b_1(\xi) = \overline\xi. 
\]

With \eqref{f-Ugamma} it easily follows that for $|\gamma|<1$ the operators $U_\gamma$ are not unitary,  but contractive operators. 
Moreover, they are  what is called \emph{completely non-unitary} contractions, meaning that there is no  reducing subspace (i.e.~invariant for both $U_\gamma$ and $U_\gamma^*$) on which $U_\gamma$ acts unitarily. 

It is not difficult to compute the so-called \emph{defect operators} for $U_\gamma$, $|\gamma|<1$. Recall that for a contraction $T$ its defect operators $D_T$ and $D_{T^*}$ are defined as 
\begin{equation}
\label{defect}
D_T = \left( \I - T^*T \right)^{1/2}, \qquad D_{T^*} := \left( \I- TT^*\right)^{1/2}. 
\end{equation}
For $U_\gamma$, $|\gamma|<1$  defined above, we almost immediately obtain
\begin{align}
\label{defect-U_gamma}
D_{U_\gamma} =\left( 1 -|\gamma|^2\right)^{1/2} b_1b_1^*, \qquad  D_{U_\gamma^*} =\left( 1 -|\gamma|^2\right)^{1/2} b b^*.
\end{align}

\subsection{Functional models}
It is well-known that up to unitary equivalence a completely non-unitary contraction $T$ is fully determined  by its \emph{characteristic function} $\theta$ (see the definition below).  Namely,  $T$ is unitarily equivalent to its \emph{functional model} $\cM=\cM_\theta$, where $\cM_\theta$ is a \emph{compression}  of the multiplication operator $M_z$, 
\begin{align}
\label{model-01}
\cM_\theta  = P_{\theta} M_z \bigm|_{\textstyle \cK_\theta};
\end{align}
here $\cK_\theta $ is a subspace of a generally vector-valued, and possibly weighted $L^2$ space on the unit circle, $P_\theta = P_{\cK_\theta}$ is the orthogonal projection onto $\cK_\theta$, and $M_z$ is the multiplication by the independent variable $z$, $M_z f(z) = z f(z)$, $z\in \T$. 

There are several accepted models (transcriptions): the  Sz.-Nagy--Foia\c{s} model is probably the most known; the other two widely used ones are the de Branges--Rovnyak and  Pavlov models. 

While it is not relevant in the present paper, let us mention that according to Nikolski--Vasyunin \cite{Nik-Vas_model_MSRI_1998} all such functional models  can be obtained from the so-called \emph{free function model} by choosing a spectral representation of the minimal unitary dilation and then choosing the so-called \emph{functional embeddings}. 
A reader interested in the details should consult \cite{Nik-Vas_model_MSRI_1998}.  

We will only use the fact that a model is defined as above in \eqref{model-01}, our representation formula (Theorem \ref{t-reprB}) holds for any such a model.  We will also present the detailed calculations for the Sz.-Nagy--Foia\c{s} and for the de Branges--Rovnyak models.

\subsection{What is done in  the paper} In our case the defect subspaces of the operator $U_\gamma$, $|\gamma|<1$ are one dimensional, so the characteristic function $\theta_\gamma := \theta\ci{U_\gamma}$ is a (scalar-valued) bounded analytic function, and a model subspace $\cK_{\theta_\gamma}$ is a subspace of $L^2$ with values in $\C^2$ (with matrix weight in some transcriptions). 

The goal of this paper is to describe unitary operators $\Phi=\Phi_\gamma: \cK_{\theta_\gamma}\to L^2(\mu)$ intertwining the operator $U_\gamma$, $|\gamma|<1$ and 
its model, i.e.~such that $U_\gamma\Phi_\gamma = \Phi_\gamma \cM_{\theta_\gamma}$. Of course, such an operator depends on the transcription of the model, 
but to avoid overloading formulas we will not incorporate the transcription into the notation (although we will 
always say what transcription we are considering in the statements).

In the \emph{classical} case of purely singular measures $\mu$ (which is equivalent to the characteristic function $\theta_\gamma$ being inner) the operators $\Phi_\gamma$ (in Sz.-Nagy--Foia\c{s} transcription) were described by Clark \cite{Clark}, so we will call $\Phi_\gamma$ the Clark operator. 

We will describe this operator and its adjoint for the general case. The adjoint Clark operator will be represented as a singular integral operator with Cauchy type kernel. 

The plan of the paper is as follows. 

In Section \ref{s:prelim} we give necessary information about the functional models, compute the characteristic functions for $U_\gamma$, and discuss the uniqueness of the Clark operator. Most of the material in this section is not new and is presented only for the convenience of the reader. 

In Section \ref{s:UnivRepr} we give a ``universal'' representation formula for the adjoint Clark operator $\Phi_\gamma^*$. ``Universal'' here means that the formula works for any transcription of the functional model. 

In Section \ref{s:AltRepr} we present a formula for the adjoint Clark operator adapted to the Sz.-Nagy--Foia\c{s} transcription. A feature of this transcription is that the ``top'' entry in the model space belongs to the Hardy space $H^2$. We present a formula where the ``top'' entry is given, as in the classical case (see Theorem 5.1 of \cite{Clark}) in Clark theory by the boundary values of the so-called \emph{normalized Cauchy transform}. 

Section \ref{s:Phi^*_gamma-deBranges} deals with a formula for $\Phi_\gamma^*$ in the de Branges--Rovnyak transcription for the general case. 
In this transcription the elements of the model space are (boundary values of the) pairs of functions analytic inside and outside of the unit disc, and our representation formula is given in terms of the normalized Cauchy transforms inside and outside the unit disc.

Also in Section \ref{s:Phi^*_gamma-deBranges}, this representation is then used to get the formulas for $\Phi_\gamma$, which generalize Poltoratski's theorem \cite{NONTAN}.

In Section \ref{s:boundNCT} we improve a result concerning the boundedness of the normalized Cauchy transform as an operator. When taking the limits of the same expression from the outside of the unit disk, we discover that the resulting operator is an unbounded. We introduce an expression which yields a bounded operator that corresponds to the exterior Cauchy transform.

In Section \ref{s:OtherClarkMeas} we will revisit some of the previous results and explain how to treat the case of other Clark measures, i.e.~how to write the adjoint Clark operator in the spectral representation of the operator $U_\alpha$, $\alpha\in\T$. The main idea is rather straightforward, but the details turned out to be somewhat tedious. So for the readers' convenience we present the formulas with the detailed explanation. We relate our results to Clark's work \cite{Clark}.

In the appendix, Section \ref{s:V_alpha},  we show that the spectral representation of $U_\alpha$ is given by a singular integral operator of special form. Further we prove a certain converse statement: Given such an integral operator, we derive a rank one perturbation setup. In the related setting of  self-adjoint rank one perturbations these results were first obtained by the authors in \cite{mypaper}.

\section{Preliminaries: characteristic function and model for the operators 
\texorpdfstring{$U_\gamma$}{U<sub>gamma}
}\label{s:prelim}

Let us recall the main definitions. For a contraction $T$ acting on a separable Hilbert space let
\begin{align}
\label{DefectSpaces}
\fD=\fD_{T} := \clos \Ran D_T, \qquad \fD_*=\fD_{T^*} := \clos\Ran D_{T^*}
\end{align}
be the \emph{defect subspaces} of the operator $T$. The characteristic function $\theta=\theta_T$ of the operator $T$ is an analytic function $\theta =\theta_T \in H^\infty_{\fD\shto \fD_*}$ whose values are bounded operators (in fact, contractions) acting from $\fD$ to $\fD^*$ defined by the equation 
\begin{align}
\label{CharFunction}
\theta_T(z) = \left( -T + z D_{T^*}(\I - z T^*)^{-1} D_T \right) \Bigm|_{\textstyle\fD}, \qquad z\in \D. 
\end{align}
Note that $T\fD \subset \fD_*$, so for $z\in D$ the above expression indeed can be interpreted as an operator from $\fD$ to $\fD_*$. 

\begin{rem}
The following facts about the characteristic function $\theta =\theta\ci T$ are well known, cf \cite[Ch.~VI, s.~1]{SzNF2010}:
\begin{enumerate}
	\item $\theta\in H^\infty\ci{\fD\shto\fD_*}$, $\|\theta\|_\infty\le 1$;
	\item the function $\theta$ is \emph{purely contractive}, meaning that 
	\[
	\|\theta (0) x\| <\|x\| ,  \qquad \forall x\in \fD. 
	\]
\end{enumerate}
\end{rem}
\begin{rem}
The values $\theta(z)$ are contractions, so the non-tangential boundary values (limits in strong operator topology) exist a.e.~on $\T$. Some care is needed to  prove the existence of the non-tangential boundary values in the general case when $\fD$ and $\fD_*$ can be infinite-dimensional, but in our case $\dim \fD =\dim\fD_* =1$,  so the existence of boundary values is the standard fact from the theory of (scalar-valued) Hardy spaces. 
\end{rem}

We will follow the standard convention that the characteristic function is defined up to constant unitary factors on the right and on 
the left, i.e.~we consider the whole equivalence class consisting of functions $U\theta V$, 
where $U:\fD_*\to E_*$ and $V:\fD\to E$ are unitary operators and $E_*$, $E$ are Hilbert spaces of appropriate dimensions. 
The advantage of this point of view is that we are not restricted to using the defect spaces of $T$, but can work arbitrary Hilbert spaces of appropriate dimensions. 

Note, that the characteristic function (defined up to constant unitary factors) is a unitary invariant of a completely 
non-unitary contraction: any two such contractions with the same characteristic function are unitary equivalent.

Note also , that given a characteristic function, any representative  gives us a model, and there is a standard unitary equivalence between the model for different representatives.

\begin{rem*}
Another way to look at a choice of a representation of a characteristic function is to pick orthonormal bases in the defect spaces and treat the characteristic function as a matrix-valued function (possibly of infinite size). The choice of the orthonormal bases is equivalent to the choice of the constant unitary factors. 
\end{rem*}

\subsection{Functional models}
In this paper by a \emph{functional model} associated to an operator-valued function $\theta\in H^\infty_{E\shto E_*}$ we understand the following: a model space $\cK_\theta$ is an appropriately constructed subspace of a (possibly) weighted space $L^2(E_*\oplus E,W)$ on the unit circle $\T$ with the operator-valued weight  $W$.  The model operator $\cM_\theta$ is a compression of the multiplication operator $M_z$ onto $\cK_\theta$, 
\begin{align}
\label{model-02}
\cM_\theta  = P_{\theta} M_z \bigm|_{\textstyle \cK_\theta};
\end{align}
where $P_\theta = P_{\cK_\theta}$ is the orthogonal projection onto $\cK_\theta$. 

All the functional models for the same $\theta$ are unitarily equivalent, so sometimes people interpret them as different \emph{transcriptions} of one object. 

As we already mentioned above, a completely non-unitary contraction with characteristic function $\theta$ is unitarily equivalent to its model $\cM_\theta$. 

On the other hand, for any purely contractive $\theta\in H^\infty\ci{E\shto E_*}$, 
$\|\theta\|_\infty\le 1$ the model operator $\cM_\theta$ is a purely non-unitary contraction, with $\theta $ being its characteristic function. Thus, any such $\theta$ is a characteristic function of a completely non-unitary contraction.


\subsection{Sz.-Nagy--
\texorpdfstring{Foia\c{s}}{Foias} 
transcription}

The Sz.-Nagy--Foia\c{s} model (transcription) is probably the most used.

The model  space $\K_\theta$ is defined as a subspace of $L^2(E_*\oplus E)$ (non-weighted, $W(z)\equiv \I$), 
\begin{align}\label{K_theta}
 \K_\te=
 \left( \begin{array}{c} H^2_{E_*} \\ \clos\Delta L^2_{E} \end{array}\right) 
 \ominus 
  \left( \begin{array}{c} \theta \\ \Delta \end{array}\right) H^2_{E}
\intertext{where the defect $\Delta$ is given by}
 \Delta(z) := (1-\theta(z)^*\theta(z))^{1/2} , \qquad z\in \T
 .
\end{align}
If the characteristic function $\theta$ is \emph{inner}, meaning that its boundary values are isometries a.e.~on $\T$, then $\Delta \equiv 0$, so the lower ``floor'' of $\cK_\theta$ collapses and we get a simpler, ``one-story'' model subspace, 
\begin{align}
\label{oneStory-K_theta}
\cK_\theta = H^2(E_*)\ominus \theta H^2(E). 
\end{align}
This subspace is probably much more familiar to analysts, especially when $\theta$ is a scalar-valued function. 

The model operator $\cM$ is defined by \eqref{model-02} as the compression of the multiplication operator $M_z$ (also known as forward shift operator) onto $\cK_\theta$, 
and the multiplication operator $M_z$ is understood as the entry-wise multiplication by the independent variable $z$, 
\[
M_z \left(\begin{array}{c} g \\ h \end{array} \right) = \left(\begin{array}{c} zg \\ zh \end{array} \right). 
\]

As we discussed above, 
the characteristic function $\theta$ is defined up to constant unitary factors on the right and on the left. 
But one has to be a bit careful here, because if $\wt \theta(z) = U \theta (z) V$, where $U$ and $V$ are constant unitary operators, then the spaces $\cK_\theta$ and $\cK_{\wt\theta}$ are different. 

However, the map $\cU$
\[
\cU \left(\begin{array}{c} g \\ h \end{array} \right) = \left(\begin{array}{c} U g \\ V^*  h \end{array} \right) 
\]
is the canonical unitary map transferring the model from one space to the other. 

Namely, it is easy to see that $\cU$ is a unitary map from $H^2(E_*) \oplus \clos\Delta L^2 (E)$ onto $H^2(UE_*) \oplus \clos \wt\Delta L^2 (V^* E)$, where $\wt \Delta = \Delta_{\wt \theta} = V^*\Delta V$. Moreover, it is not difficult to see that $\cU \cK_\theta = \cK_{\wt\theta}$ and that $\cU$ commutes with the multiplication by $z$, so $\cU_\theta := \cU\bigm|_{\textstyle \cK_\theta}$ intertwines the model operators, 
\[
\cU_\theta \cM_\theta = \cM_{\wt\theta} \cU_\theta  .
\]

\subsection{de Branges--Rovnyak transcription}
\label{s:deBrangesRepr}
Let us present this transcription as it is described in \cite{Nik-Vas_model_MSRI_1998}. Since the ambient space in this transcription is a weighted $L^2$ space with an operator-valued weight, let us recall that if $W$ is an operator-valued weight on the circle, i.e.~a function whose values are self-adjoint non-negative operators in a Hilbert space $E$, then the norm in the space $L^2(W) $ is defined as 
\[
\|f\|_{L^2(W)} = \int_\T \left( W(z) f(z), f(z)\right)\ci{E} \frac{|dz|}{2\pi} \,.
\]
There are some delicate details here in defining the above integral if we allow the values $W(z)$ to be unbounded operators, but we will not discuss it here. In our case when the characteristic function is scalar-valued the values $W(z)$ are bounded self-adjoint operators on $\C^2$, and the definition of the integral is straightforward. 

Let 
\[
W_\theta(z) = \left(\begin{array}{cc} \I & \theta(z) \\ \theta(z)^* & \I \end{array}\right)\, .
\]
The weight in the ambient space will be given by $W=W_\theta^{[-1]}$, $W_\theta^{[-1]}(z) = (W_\theta(z))^{[-1]}$ where 
$A^{[-1]}$ stands for the Moore--Penrose inverse of the operator $A$. If $A=A^*$ then $A^{[-1]}$ is $\OZ$ on $\Ker A$ and is 
equal to the left inverse of $A$ on $\Ran A$. The model space
$\cK_\theta$ is defined as
\begin{align}
\label{deBrangesRepr}
\cK_\theta = \left\{
\left(\begin{array}{c} g_+ \\ g_-\end{array} \right) \,:\ g_+\in H^2(E_*),\  g_- \in H^2_-(E),\  g_- - \theta^* g_+ \in \Delta L^2(E)
\right\}   .
\end{align}

\begin{rem}
\label{r:dBr-R_orig}
The original de Branges--Rovnyak model was initially described in \cite{deBr-Rovn_CanonModels_1966} in  completely different terms. To give the definition from \cite{deBr-Rovn_CanonModels_1966} we need to recall the notion of a Toeplitz operator. For $\f\in L^\infty_{E\shto E_*}$ the Toeplitz operator $T_\f : H^2(E)\to H^2(E_*)$ with symbol $\f$ is defined by 
\[
T_\f f := P_+ (\f f), \qquad f\in H^2(E). 
\]

The (preliminary) space $\cH(\theta) \subset H^2(E_*)$ is defined as a range $(\I - T_{\theta}T_{\theta^*})^{1/2} H^2 (E)$ endowed with the \emph{range norm} (the minimal norm of the preimage). 

Let the involution operator $J$ on $L^2(\T)$ be defined as 
\[
Jf(z) = \overline z f(\overline z). 
\]
Following de Branges--Rovnyak \cite{deBr-Rovn_CanonModels_1966} define  the \emph{model space} $\cD(\theta)$  as the set of vectors 
\[
\left( \begin{array}{c}
g_1 \\ 
g_2 \\ 
\end{array} \right) \ : g_1\in \cH(\theta), \ g_2\in H^2(E),  \text{ such that } z^n g_1 -\theta P_+(z^n Jg_2) \in \cH(\theta) \ \forall n\ge 0, 
\] 
and such that 
\[
\left\| \left( \begin{array}{c}
g_1 \\ 
g_2 \\ 
\end{array} \right) \right\|_{\cD(\theta)}^2 := \lim_{n\to\infty}
\left( \|z^n g_1 -\theta P_+(z^n Jg_2)\|\ci{\cH(\theta)}^2 + \|P_+(z^n Jg_2)\|_2^2 \right) <\infty. 
\]
It might look surprising, but it was proved in \cite{Nik-Vas_FunctModels_1989} that the operator 
$\left( \begin{array}{c}
g_+ \\ 
g_- \\ 
\end{array} \right)
\mapsto 
\left( \begin{array}{c}
g_+ \\ 
Jg_- \\ 
\end{array} \right)
$
is a unitary operator between the described above model space $\cK_\theta$ in the de Branges--Rovnyak transcription  and the model space $\cD(\theta)$. 
\end{rem}

\subsection{Characteristic function for 
\texorpdfstring{$U_\gamma$}{U<sub>gamma}
}
\label{s:CharFunctU_gamma}
One immediately sees from \eqref{defect-U_gamma} that the defect spaces $\fD_{U_\gamma}$ and $\fD_{U^*_\gamma}$ are spanned by the vectors $b_1$ and $b$ respectively. Pick these vectors as orthonormal bases in the corresponding defect spaces. The only freedom we have in the choice of the bases  is to multiply the vectors $b_1$ and $b$ by unimodular constant, and we set both these constants to be $1$. 

For a (possibly complex-valued) measure $\tau$ on $\T$ and $\la\notin\T$ define the following Cauchy type transforms $R$, $R_1$ and $R_2$
\begin{align}
\label{CauchyTrans}
R \tau (\la) := \int_\T \frac{d\tau(\xi)}{1-\overline\xi\la}, \qquad R_1 \tau (\la) := \int_\T \frac{\overline\xi \la d\tau(\xi)}{1-\overline\xi\la}, \qquad R_2\tau(\la):= \int_\T \frac{1+ \overline\xi \la }{1-\overline\xi\la} d\tau(\xi).
\end{align}

Recall that $U=U_1$ is the multiplication by the independent variable $\xi$ in $L^2(\mu)$, and the operators $U_\gamma$ are defined as $U_\gamma = U+ (\gamma-1) b b_1^*$, where $b(\xi)\equiv 1$, $b_1(\xi) \equiv \overline \xi$. 

\begin{lem}
\label{l:CharFunct-U_0}
Under the above assumption the characteristic function $\theta =\theta_0$ of the operator $U_0$ (with respect to the bases $b_1$ and $b$ in the defect spaces) is given by 
\[
\theta(\la) =  \frac{R_1 \mu(\la)}{1 + R_1 \mu(\la)} =  \frac{R_2\mu(\la) -1}{R_2\mu(\la)+1}, \qquad \la\in \D.
\]
\end{lem}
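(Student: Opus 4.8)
The plan is to compute $\theta_0$ directly from the definition \eqref{CharFunction} applied to $T=U_0$, taking advantage of the fact that both defect spaces are one-dimensional. Setting $\gamma=0$ in \eqref{defect-U_gamma} gives $D_{U_0}=b_1b_1^*$ and $D_{U_0^*}=bb^*$, so $\fD_{U_0}=\spa\{b_1\}$, $\fD_{U_0^*}=\spa\{b\}$; identifying $\theta_0(\la)$ with a scalar via the orthonormal bases $\{b_1\}$ and $\{b\}$ we have
\[
\theta_0(\la) = \bigl( \theta_{U_0}(\la) b_1,\, b \bigr)\ci{L^2(\mu)}, \qquad
\theta_{U_0}(\la) b_1 = -U_0 b_1 + \la\, D_{U_0^*}(\I - \la U_0^*)^{-1} D_{U_0} b_1, \quad \la\in\D.
\]
First I would record the simplifications coming from the concrete realization $U=M_\xi$ on $L^2(\mu)$ with $b\equiv 1$, $b_1\equiv\bar\xi$: since $|b_1|\equiv 1$ on $\T$, $D_{U_0}b_1=b_1(b_1,b_1)=b_1$; and since $Ub_1=\xi\bar\xi\equiv 1=b$ while $U_0=U-bb_1^*$, we get $U_0b_1 = b - b(b_1,b_1)=0$. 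Hence $\theta_{U_0}(\la)b_1 = \la\, D_{U_0^*}(\I-\la U_0^*)^{-1}b_1$.

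The heart of the matter is inverting $\I-\la U_0^*$ on the vector $b_1$, and this is the only step that requires real (if routine) work. Using $U_0 = (\I-bb^*)U$, equivalently $b_1^* = b^*U$, we have $U_0^* = U^*(\I-bb^*)$. Set $f := (\I-\la U_0^*)^{-1}b_1$, which exists and is unique for $|\la|<1$ since $U_0$ is a contraction, and put $c := (f,b)$. Because $U^*b = \bar\xi = b_1$, the equation $(\I-\la U_0^*)f = b_1$ becomes $(\I-\la U^*)f = (1-\la c)b_1$, so
\[
f = (1-\la c)(\I-\la U^*)^{-1}b_1 = (1-\la c)\,\frac{\bar\xi}{1-\la\bar\xi}
\]
(the Neumann series for $(\I-\la M_{\bar\xi})^{-1}$ converges since $|\la|<1=|\bar\xi|$ a.e.). Pairing with $b$ and setting $g(\la) := \int_\T\frac{\bar\xi}{1-\la\bar\xi}\,d\mu(\xi)$, so that $R_1\mu(\la) = \la g(\la)$ by \eqref{CauchyTrans}, yields the self-consistency relation $c = (1-\la c)g(\la)$, i.e.\ $c\bigl(1+\la g(\la)\bigr) = g(\la)$. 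Since $1+\la g(\la)=0$ would force $g(\la)=0$ and hence $1+\la g(\la)=1$, we conclude $1+R_1\mu(\la)=1+\la g(\la)\ne 0$ on $\D$ and $\la c = \frac{\la g(\la)}{1+\la g(\la)} = \frac{R_1\mu(\la)}{1+R_1\mu(\la)}$. Finally $D_{U_0^*}f = b(f,b) = cb$, so $\theta_{U_0}(\la)b_1 = \la c\, b$ and therefore $\theta_0(\la) = \la c = \dfrac{R_1\mu(\la)}{1+R_1\mu(\la)}$.

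The second equality is pure algebra: from $\frac{1+\bar\xi\la}{1-\bar\xi\la} = 1 + \frac{2\bar\xi\la}{1-\bar\xi\la}$ and $\mu(\T)=1$ one gets $R_2\mu(\la) = 1 + 2R_1\mu(\la)$, and substituting $R_1\mu(\la) = \tfrac12\bigl(R_2\mu(\la)-1\bigr)$ into $\frac{R_1\mu}{1+R_1\mu}$ gives $\frac{R_2\mu(\la)-1}{R_2\mu(\la)+1}$. The one place to be careful is the resolvent bookkeeping for $U_0^*$, namely isolating the scalar $c=(f,b)$ and reducing the computation to the unperturbed resolvent $(\I-\la U^*)^{-1}$; once this is done, everything else is direct substitution in the model $U=M_\xi$ on $L^2(\mu)$.
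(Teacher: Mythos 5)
Your computation is correct: the reduction $U_0b_1=0$, $D_{U_0}b_1=b_1$, the rewriting $U_0^*=U^*(\I-bb^*)$, and the self-consistency relation $c=(1-\la c)g(\la)$ obtained by pairing $(\I-\la U^*)f=(1-\la c)b_1$ with $b$ all check out, and the identity $R_2\mu=1+2R_1\mu$ (using $\mu(\T)=1$) gives the second form; you also correctly justify $1+R_1\mu(\la)\ne0$ on $\D$ (alternatively, $1+R_1\mu=R\mu$ has positive real part in $\D$). Note, however, that the paper does not prove this lemma at all: it simply refers to Theorem 5.1 of \cite{RonRkN} (with the same choice of bases $b_1$, $b$ in the defect spaces), so your argument is a self-contained alternative rather than a reproduction of the paper's proof. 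What you do is the standard rank-one resolvent trick --- isolating the scalar $c=(f,b)$ so that the perturbed resolvent $(\I-\la U_0^*)^{-1}b_1$ is expressed through the unperturbed multiplication operator $(\I-\la M_{\bar\xi})^{-1}$ --- plugged directly into the definition \eqref{CharFunction}; this buys a short, elementary verification within the scalar setting of the paper, whereas the cited reference \cite{RonRkN} develops the computation in the more general framework of finite rank unitary perturbations (cf.\ their Theorem 4.1), of which this lemma is the rank-one special case.
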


\begin{lem}
\label{CharFunct-U_gamma}
Under the above assumption the characteristic function $\theta_\gamma$ of the operator $U_\gamma$, $|\gamma|<1$ (with respect to the bases $b_1$ and $b$ in the defect spaces) is given by 
\[
\theta_\gamma(\la) =  -\gamma + \frac{(1-|\gamma|^2) R_1 \mu(\la)}{1 + (1-\overline\gamma) R_1 \mu(\la)} = 
\frac{(1-\gamma) R_2\mu(\la) -(1+\gamma)}{(1-\overline\gamma )R_2\mu(\la)+(1+\overline\gamma)}, \qquad \la\in \D.
\]
\end{lem}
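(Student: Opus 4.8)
\textbf{Proof proposal for Lemma~\ref{CharFunct-U_gamma}.}
The plan is to reduce the computation of $\theta_\gamma$ to that of $\theta_0$ (Lemma~\ref{l:CharFunct-U_0}) by exploiting the fact that all the $U_\gamma$, $|\gamma|<1$, share the same defect \emph{subspaces}: from \eqref{defect-U_gamma} we have $\fD_{U_\gamma}=\spa\{b_1\}$ and $\fD_{U_\gamma^*}=\spa\{b\}$ for every $\gamma$ with $|\gamma|<1$, and we use $b_1$, $b$ as the (unimodular) orthonormal bases throughout. So the characteristic functions $\theta_\gamma$ are genuinely scalar functions attached to the \emph{same} pair of one-dimensional spaces, and the defining formula \eqref{CharFunction} becomes
\[
\theta_\gamma(\la) = \bigl(-U_\gamma + \la D_{U_\gamma^*}(\I-\la U_\gamma^*)^{-1}D_{U_\gamma}\bigr) b_1,
\]
read as a complex number via the coefficient in front of $b$. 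First I would substitute $U_\gamma = U + (\gamma-1)bb_1^*$ and $D_{U_\gamma}=(1-|\gamma|^2)^{1/2}b_1b_1^*$, $D_{U_\gamma^*}=(1-|\gamma|^2)^{1/2}bb^*$, and note $U_\gamma b_1 = U b_1 + (\gamma-1)b = b + (\gamma-1)b = \gamma b$ (since $Ub_1 = UU^{-1}b = b$). Thus the $-U_\gamma b_1$ term contributes $-\gamma$, which already matches the leading term in the claimed formula, and the whole problem is to evaluate the resolvent term $\la(1-|\gamma|^2)^{1/2}\, b\, b^*(\I-\la U_\gamma^*)^{-1}b_1$.

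Next I would compute $(\I-\la U_\gamma^*)^{-1}b_1$. Since $U_\gamma^* = U^* + (\overline\gamma-1)b_1 b^*$, one has $\I - \la U_\gamma^* = (\I-\la U^*) - \la(\overline\gamma-1)b_1 b^*$, a rank-one perturbation of $\I-\la U^*$; I would apply the Sherman--Morrison / Woodbury formula
\[
(\I-\la U_\gamma^*)^{-1} = (\I-\la U^*)^{-1} + \frac{\la(\overline\gamma-1)\,(\I-\la U^*)^{-1}b_1\, b^*(\I-\la U^*)^{-1}}{\,1 - \la(\overline\gamma-1)\, b^*(\I-\la U^*)^{-1}b_1\,}.
\]
All the scalar quantities appearing here are Cauchy-type transforms of $\mu$ evaluated in $L^2(\mu)$, where $U^*$ is multiplication by $\overline\xi$, $b\equiv 1$, $b_1(\xi)=\overline\xi$: for instance $b^*(\I-\la U^*)^{-1}b_1 = \int_\T \frac{\overline\xi}{1-\la\overline\xi}\,d\mu(\xi)$, which after multiplying numerator and denominator by $\la$ is exactly $\la^{-1}R_1\mu(\la)$ in the notation of \eqref{CauchyTrans} (and similarly $b^*(\I-\la U^*)^{-1}b = R\mu(\la)$, etc.). A convenient bookkeeping device is to record what the same manipulations give at $\gamma=0$ — there Lemma~\ref{l:CharFunct-U_0} already packages the answer as $\theta_0(\la) = R_1\mu(\la)/(1+R_1\mu(\la))$ — and then see that turning on $\gamma$ only modifies the scalar resolvent by the Sherman--Morrison correction above.

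Assembling the pieces, the resolvent term becomes $\la(1-|\gamma|^2)^{1/2}$ times $b^*(\I-\la U_\gamma^*)^{-1}b_1$, and after substituting the Cauchy transforms and simplifying I expect to land on $-\gamma + \dfrac{(1-|\gamma|^2)R_1\mu(\la)}{1+(1-\overline\gamma)R_1\mu(\la)}$. Finally I would pass to the second expression using the algebraic identity $R_2\mu = 2R\mu - \mu(\T) = 2R\mu - 1$ together with $R\mu = \mu(\T) + R_1\mu = 1 + R_1\mu$ (immediate from $\frac{1}{1-\overline\xi\la} = 1 + \frac{\overline\xi\la}{1-\overline\xi\la}$), which gives $R_1\mu = \tfrac12(R_2\mu - 1)$; substituting this into the first form and clearing denominators yields $\dfrac{(1-\gamma)R_2\mu(\la)-(1+\gamma)}{(1-\overline\gamma)R_2\mu(\la)+(1+\overline\gamma)}$, as claimed. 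I do not anticipate a serious conceptual obstacle; the one place demanding care is the rank-one resolvent algebra — keeping track of which scalars are $b^* (\cdots) b$ versus $b^*(\cdots)b_1$, and correctly converting each into the normalized $R$, $R_1$, $R_2$ transforms (in particular the stray factors of $\la$ and $\overline\xi$) — and then verifying that the denominator produced by Sherman--Morrison is precisely $1 + (1-\overline\gamma)R_1\mu(\la)$ rather than something off by a unit. An alternative, essentially equivalent route that avoids the resolvent identity is to use the general transformation rule for characteristic functions under a further rank-one unitary-type perturbation inside the disc (writing $U_\gamma$ as such a perturbation of $U_0$) and simply feed in $\theta_0$; I would mention this but carry out the direct Woodbury computation as the main argument.
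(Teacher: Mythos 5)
Your computation is correct, and it is worth pointing out that the paper itself does not prove Lemma \ref{CharFunct-U_gamma} at all: it simply cites Theorem 5.1 of \cite{RonRkN}, remarking only that the choice of bases $b_1$, $b$ there agrees with the one used here. So your argument is a genuinely different, self-contained route: you plug $U_\gamma=U+(\gamma-1)bb_1^*$ and the defect operators \eqref{defect-U_gamma} into the definition \eqref{CharFunction}, use $U_\gamma b_1=\gamma b$ to produce the constant term $-\gamma$, and evaluate $b^*(\I-\la U_\gamma^*)^{-1}b_1$ by Sherman--Morrison applied to $\I-\la U^*-\la(\overline\gamma-1)b_1b^*$. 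This does work: with $s:=b^*(\I-\la U^*)^{-1}b_1=\la^{-1}R_1\mu(\la)$ the correction collapses to $b^*(\I-\la U_\gamma^*)^{-1}b_1=s/\bigl(1-\la(\overline\gamma-1)s\bigr)$, giving $\theta_\gamma(\la)=-\gamma+(1-|\gamma|^2)R_1\mu(\la)/\bigl(1+(1-\overline\gamma)R_1\mu(\la)\bigr)$, and the passage to the $R_2$-form via $R\mu=1+R_1\mu$, $R_2\mu=2R\mu-1$ (using $\mu(\T)=1$), i.e. $R_1\mu=\tfrac12(R_2\mu-1)$, checks out by direct algebra. What your route buys is independence from \cite{RonRkN} and a built-in verification that one is computing the very representative of the characteristic function fixed by the bases $b_1$, $b$; what the citation buys the authors is brevity and a statement already available for finite-rank perturbations. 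Two small repairs to your write-up: the resolvent term is $\la(1-|\gamma|^2)\,b\,b^*(\I-\la U_\gamma^*)^{-1}b_1$, not $\la(1-|\gamma|^2)^{1/2}(\cdots)$ --- you dropped the second square root coming from $D_{U_\gamma}b_1=(1-|\gamma|^2)^{1/2}b_1$ (your final formula carries the correct factor $(1-|\gamma|^2)$, so this is only a slip in the intermediate displays, but as written the numerator would come out with a spurious square root); and you should note that the Sherman--Morrison denominator $1+(1-\overline\gamma)R_1\mu(\la)=\overline\gamma+(1-\overline\gamma)R\mu(\la)$ does not vanish on $\D$ --- this is automatic since $\I-\la U_\gamma^*$ is invertible for $|\la|<1$ (or directly from $\re R\mu(\la)>1/2$ and $|\gamma|<1$), so the formula is legitimate throughout the disc.
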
 

For the proofs of the latter two lemmata we refer the reader to Theorem 5.1 of \cite{RonRkN}. Notice that their choice of bases for the defect spaces agrees with ours. In other words, we consider the same representative from the equivalence class of characteristic functions.

For finite rank perturbations an analogous formula for the characteristic function was also obtained in Theorem 4.1 of \cite{RonRkN}.

\begin{rem}
\label{l:theta_0-theta_gamma}
It can be easily seen by the direct calculations that 
\begin{align}
\label{theta_0-theta_gamma}
\theta_\gamma = \frac{\theta_0-\gamma}{1-\overline\gamma\theta_0} \qquad
\textup{or equivalently } \qquad \theta_0 = \frac{\theta_\gamma + \gamma}{1 + \overline\gamma\theta_\gamma} ,
\end{align}
and that
\begin{align*}
\Delta_\gamma = \frac{(1- |\gamma|^2)^{1/2}}{|1-\overline\gamma\theta_0|} \Delta_0.
\end{align*}
\end{rem}

\subsection{Defect subspaces for the model operator in the Sz.~Nagy--%
\texorpdfstring{Foia\c{s}}{Foias} 
transcription}
The following simple and well-known lemma describes the defect spaces of a model operator.

\begin{lem}
\label{l:DefectsModelOp}
Let $\theta\in H^\infty$ (scalar-valued), $\|\theta\|_\infty\le 1$ be a purely contractive function, 
and let $\cM=\cM_\theta$ be the corresponding model operator, $\cM= P_\theta M_z \bigm|_{\textstyle \cK_\theta}$. Then the defect spaces of $\cM$ are given by
\begin{align}
\label{defects}
\fD_{\cM^*} = \spa\{c\}, \qquad \fD_{\cM} = \spa\{ c_1\}, 
\end{align}
where 
\begin{align}
\label{c}
c(z) &:= \left( 1- |\theta(0)|^2 \right)^{-1/2} 
\left( \begin{array}{c} 1-\overline{\theta(0)}\theta (z)\\ -\overline{\theta(0)}\Delta (z) \end{array} \right), 
\\
\label{c_1}
c_1(z) & := \left( 1- |\theta(0)|^2 \right)^{-1/2} 
\left( \begin{array}{c} z^{-1} \left(\theta(z)- \theta(0)\right) \\ z^{-1} \Delta (z)\end{array} \right), 
\end{align}
$\|c\|=\|c_1\|=1$. 
Moreover, 
\begin{align}
\label{Mc_1}
\cM c_1 = -\theta(0) c. 
\end{align}
\end{lem}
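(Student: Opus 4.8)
The plan is to verify all the claims by direct computation in the Sz.-Nagy--Foia\c{s} model space $\cK_\theta$, using the explicit description \eqref{K_theta} and the two defining orthogonality conditions: membership in $\binom{H^2}{\clos\Delta L^2}$ and orthogonality to $\binom{\theta}{\Delta}H^2$. First I would check that $c$ and $c_1$ as defined in \eqref{c}--\eqref{c_1} actually lie in $\cK_\theta$. For the ``top entry in $H^2$'' requirement: for $c$ this is clear since $1-\overline{\theta(0)}\theta(z)\in H^2$ and $\Delta L^2\subset L^2$; for $c_1$ one notes that $\theta(z)-\theta(0)$ vanishes at $z=0$, so $z^{-1}(\theta(z)-\theta(0))\in H^2$, and similarly $z^{-1}\Delta(z)\in L^2$ trivially (multiplication by $\bar z$ is unitary on $L^2$), with $z^{-1}\Delta(z)\in\clos\Delta L^2$ because $\clos\Delta L^2$ is $M_z$-reducing (as $\Delta$ depends only on $|z|$-type data, i.e. $\clos\Delta L^2$ is invariant under multiplication by $z$ and $\bar z$). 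For the orthogonality to $\binom{\theta}{\Delta}H^2$: given $h\in H^2$, compute $\langle c,\binom{\theta}{\Delta}h\rangle = \int_\T\big[(1-\overline{\theta(0)}\theta)\overline{\theta}\,\overline h + (-\overline{\theta(0)}\Delta)\Delta\overline h\big]\frac{|dz|}{2\pi}$. Using $\Delta^2 = 1-\overline\theta\theta = 1-|\theta|^2$ on $\T$, the bracket collapses to $\overline h\big(\overline\theta - \overline{\theta(0)}|\theta|^2 - \overline{\theta(0)}(1-|\theta|^2)\big) = \overline h\big(\overline\theta - \overline{\theta(0)}\big) = \overline{(\theta - \theta(0))h}$; since $(\theta-\theta(0))h\in H^2$ vanishes at $0$, its integral against $1$ (i.e. the $0$-th Fourier coefficient, conjugated) — wait, more carefully, $\int_\T \overline{(\theta-\theta(0))h}\,\frac{|dz|}{2\pi}$ is the conjugate of the $0$-th Taylor coefficient of $(\theta-\theta(0))h$, which is $0$. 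So $c\perp\binom{\theta}{\Delta}h$ for all $h\in H^2$, giving $c\in\cK_\theta$. The analogous computation for $c_1$ uses the extra factor $z$: $\langle c_1,\binom{\theta}{\Delta}h\rangle$ involves $\int_\T z^{-1}\big[(\theta-\theta(0))\overline\theta + \Delta^2\big]\overline h\,\frac{|dz|}{2\pi} = \int_\T \bar z\big(1 - \theta(0)\overline\theta\big)\overline h\,\frac{|dz|}{2\pi}$; conjugating, this is the conjugate of $\int_\T z(1-\overline{\theta(0)}\theta)h\,\frac{|dz|}{2\pi}$, i.e. the $(-1)$-st Taylor coefficient of an $H^2$ function, hence $0$.

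Next I would compute the norms. By the Pythagorean-type identity on $\T$ and the above simplifications, $\|c\|^2 = (1-|\theta(0)|^2)^{-1}\int_\T\big(|1-\overline{\theta(0)}\theta|^2 + |\theta(0)|^2\Delta^2\big)\frac{|dz|}{2\pi}$. Expanding $|1-\overline{\theta(0)}\theta|^2 = 1 - 2\re(\overline{\theta(0)}\theta) + |\theta(0)|^2|\theta|^2$ and adding $|\theta(0)|^2(1-|\theta|^2)$, the $|\theta|^2$ terms cancel and we get $1 - 2\re(\overline{\theta(0)}\theta) + |\theta(0)|^2$; integrating and using $\int_\T\theta\,\frac{|dz|}{2\pi} = \theta(0)$ yields $1 - 2|\theta(0)|^2 + |\theta(0)|^2 = 1-|\theta(0)|^2$, so $\|c\|^2 = 1$. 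For $c_1$, the factor $z^{-1}$ has modulus $1$ on $\T$ and so does not affect the norm; the integrand is $(1-|\theta(0)|^2)^{-1}\big(|\theta-\theta(0)|^2 + \Delta^2\big)$, and $|\theta-\theta(0)|^2 = |\theta|^2 - 2\re(\overline{\theta(0)}\theta) + |\theta(0)|^2$; adding $1-|\theta|^2$ and integrating gives again $1 - 2|\theta(0)|^2 + |\theta(0)|^2 = 1-|\theta(0)|^2$, so $\|c_1\|=1$.

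For the identity $\cM c_1 = -\theta(0)c$: since $\cM = P_\theta M_z|_{\cK_\theta}$, I would compute $M_z c_1$ first, $(M_z c_1)(z) = (1-|\theta(0)|^2)^{-1/2}\binom{\theta(z)-\theta(0)}{\Delta(z)}$, and then decompose it relative to $\cK_\theta\oplus\binom{\theta}{\Delta}H^2$ inside $\binom{H^2}{\clos\Delta L^2}$. The natural guess is $M_z c_1 = -\theta(0)c + \binom{\theta}{\Delta}h$ for a suitable $h\in H^2$; subtracting, $-\theta(0)c + M_z c_1$ equals $(1-|\theta(0)|^2)^{-1/2}\binom{\theta(z)-\theta(0) + \theta(0)(1-\overline{\theta(0)}\theta(z))}{\Delta(z) + \theta(0)\overline{\theta(0)}\Delta(z)} = (1-|\theta(0)|^2)^{-1/2}\binom{(1-|\theta(0)|^2)\theta(z)}{(1+|\theta(0)|^2)\Delta(z)}$. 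Hmm, that does not have the form $\binom{\theta}{\Delta}h$ with a single $h$ — so instead I would directly verify that $M_z c_1 + \theta(0)c$ is orthogonal to $\cK_\theta$, i.e. lies in $\binom{\theta}{\Delta}H^2$; equivalently, that $M_z c_1 + \theta(0)c = \binom{\theta}{\Delta}h$ for some $h\in H^2$. Comparing top entries forces $h = (1-|\theta(0)|^2)^{-1/2}$ times $\theta^{-1}\big((\theta-\theta(0)) + \theta(0)(1-\overline{\theta(0)}\theta)\big) = (1-|\theta(0)|^2)^{-1/2}(1-|\theta(0)|^2) = (1-|\theta(0)|^2)^{1/2}$, a constant in $H^2$; and then the bottom entry must be $\Delta\cdot(1-|\theta(0)|^2)^{1/2}$, which matches since $\Delta(z) + |\theta(0)|^2\Delta(z) = (1+|\theta(0)|^2)\Delta(z)$... which is \emph{not} $(1-|\theta(0)|^2)\Delta(z)$. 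So the bottom entries do not match, meaning $M_z c_1 + \theta(0)c\notin\binom{\theta}{\Delta}H^2$ naively; the resolution is that one must instead subtract the projection correctly — i.e. the correct statement is that $\cM c_1 = P_\theta(M_z c_1)$, and one shows $M_z c_1 - (-\theta(0)c) \perp \cK_\theta$ by pairing against an arbitrary $k\in\cK_\theta$ and using $\langle k, \binom{\theta}{\Delta}H^2\rangle = 0$ together with $\langle k, \text{(the discrepancy)}\rangle$; alternatively and more cleanly, one checks $\langle M_z c_1 + \theta(0) c, k\rangle = 0$ for all $k\in\cK_\theta$ directly. I expect \textbf{this last step to be the main obstacle}: the bookkeeping of which piece of $M_z c_1$ lands in $\binom{\theta}{\Delta}H^2$ and which is killed by $P_\theta$ is the delicate point, and the cleanest route is probably to test against the two generating families — against $\binom{\theta}{\Delta}H^2$ (automatic, since $M_z\binom{\theta}{\Delta}H^2\subset\binom{\theta}{\Delta}H^2$ and $c,c_1\in\cK_\theta$) is not quite it either, so one genuinely pairs $M_z c_1 + \theta(0)c$ against an arbitrary $k=\binom{k_+}{k_-}\in\cK_\theta$, writes out the integral, uses $\Delta^2 = 1-|\theta|^2$ and the orthogonality relations $k\perp\binom{\theta}{\Delta}z^n H^2$ for $n\ge 0$ and $k_+\in H^2$, $k_-\in\clos\Delta L^2$, and watches the terms telescope to $0$. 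Once the membership, the norm computations, and this intertwining-type identity are in hand, the description \eqref{defects} of the defect spaces follows from the general fact that for the model operator $\cM = P_\theta M_z|_{\cK_\theta}$ one has $\fD_{\cM} = \cK_\theta\ominus M_z\cK_\theta$ (dimension one here, spanned by $c_1$) and $\fD_{\cM^*} = \cK_\theta\ominus M_z^*\cK_\theta$ (spanned by $c$), which is standard; the explicit vectors are then pinned down by the membership and norm calculations above.
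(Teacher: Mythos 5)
Your verification that $c,c_1\in\cK_\theta$ and that $\|c\|=\|c_1\|=1$ is correct, but the rest of the proposal has problems. First, the proof of \eqref{Mc_1} is derailed by a sign slip: the bottom entry of $c$ is $-\overline{\theta(0)}\Delta$, so the bottom entry of $M_zc_1+\theta(0)c$ is $(1-|\theta(0)|^2)\Delta$, not $(1+|\theta(0)|^2)\Delta$. In fact $M_zc_1+\theta(0)c=(1-|\theta(0)|^2)^{1/2}\kf{\theta}{\Delta}\in\kf{\theta}{\Delta}H^2$, so your ``natural guess'' works verbatim and the ``main obstacle'' you describe does not exist; as written, however, you abandon that route and only gesture at pairing against arbitrary $k\in\cK_\theta$ and watching terms telescope, so \eqref{Mc_1} is not actually proved (it is immediately fixable: $\cM c_1=P_\theta M_zc_1$, $P_\theta\kf{\theta}{\Delta}=0$, and $P_\theta\kf{1}{0}=(1-|\theta(0)|^2)^{1/2}c$, which is exactly the paper's two-line argument).

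The serious gap is \eqref{defects} itself, the heart of the lemma. The ``standard fact'' you invoke, $\fD_{\cM}=\cK_\theta\ominus M_z\cK_\theta$ and $\fD_{\cM^*}=\cK_\theta\ominus M_z^*\cK_\theta$, is false. Reading $\cK_\theta\ominus M_z\cK_\theta$ as $\cK_\theta\cap(M_z\cK_\theta)^\perp$, one has $h\perp M_z\cK_\theta$ for $h\in\cK_\theta$ iff $P_\theta(\overline z h)=0$, i.e.\ this space is $\Ker\cM^*$ (and likewise $\cK_\theta\cap(M_z^*\cK_\theta)^\perp=\Ker\cM$), not a defect space. Since $\|D_Tg\|^2=\|g\|^2-\|Tg\|^2$, the correct description is $\fD_{\cM}=\cK_\theta\ominus\{g\in\cK_\theta:\ zg\in\cK_\theta\}$ and $\fD_{\cM^*}=\cK_\theta\ominus\{g\in\cK_\theta:\ \overline zg\in\cK_\theta\}$, which differ from the kernels whenever $\theta(0)\ne0$. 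A concrete counterexample inside the hypotheses of the lemma: $\theta(z)=(z-a)/(1-\overline az)$ with $0<|a|<1$, so $\Delta\equiv0$, $\cK_\theta$ is one-dimensional and $\cM$ acts on it as multiplication by $a$; then $\fD_{\cM}=\fD_{\cM^*}=\cK_\theta$, while $\Ker\cM=\Ker\cM^*=\{0\}$, so your formula gives the wrong (trivial) answer. Moreover, even with a corrected characterization, knowing that $c$ and $c_1$ are unit vectors of $\cK_\theta$ does not ``pin them down'' as spanning the defect spaces: you would still have to prove, say, that $c_1$ is orthogonal to $\{g\in\cK_\theta:\|\cM g\|=\|g\|\}$ and that this subspace has codimension one, none of which appears in the proposal. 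What is missing is precisely the paper's argument: compute $(I-\cM\cM^*)g$ and $(I-\cM^*\cM)g$ for $g\in\cK_\theta$ and read off that their ranges are spanned by $P_\theta\kf{1}{0}$ and by $\kf{z^{-1}(\theta-\theta(0))}{z^{-1}\Delta}$, respectively.
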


\begin{proof}
Recall that $\cM = P_\te M_z$.
If $\te(0)\neq 0$, then $\cM^\ast = P_\te S^\ast$, where
\[
S^\ast \kf{g_1}{g_2} = \kf{z^{-1} (g_1(z)-g_1(0))}{z^{-1} g_2(z)},\qquad \kf{g_1}{g_2}\in\cK_\te.
\]

With this, we have
\[
 (I - \cM \cM^\ast)  \kf{g_1}{g_2} = \kf{g_1}{g_2} - P_\te\kf{g_1(z) -g_1(0)}{g_2} = P_\te \kf{g_1(0)}{0}.
 \]
 For the second equality we used that $\kf{g_1}{g_2}\in \cK_\te$ implies $\kf{g_1}{g_2}\perp \kf{\te}{\Delta}H^2$. Since the range must be non-trivial, we can assume without loss of generality that $g_1(0) = 1$. We obtain
 \begin{align}\label{equation}
 (I - \cM \cM^\ast)  \kf{g_1}{g_2} = P_\te \kf{1}{0}  = \left( \begin{array}{c} 1-\overline{\theta(0)}\theta (z)\\ -\overline{\theta(0)}\Delta (z) \end{array} \right),
\end{align}
and the first equation of \eqref{defects}.

The normalizations follow from straightforward computations.

To see the second equation of \eqref{defects}, we simply compute
\[
 (I - \cM^\ast \cM)  \kf{g_1}{g_2} = S^\ast \kf{\te}{\Delta} P_+(\bar\te z g_1 + \Delta z g_2) = \kf{z^{-1}(\te(z)-\te(0))}{z^{-1}\Delta(z)} P_+(\bar\te z g_1 + \Delta z g_2).
\]

It remains to prove \eqref{Mc_1}. We have
\[
\cM c_1 = P_\te M_z c_1 = \left( 1- |\theta(0)|^2 \right)^{-1/2} 
P_\te \left( \begin{array}{c} \theta(z)- \theta(0) \\ \Delta (z)\end{array} \right).
\]
By the definition of the Sz.-Nagy--Foia\c s space \eqref{K_theta} the projection
$$
P_\te \left( \begin{array}{c} \theta(z) \\ \Delta (z)\end{array} \right) = {\bf 0},
$$
the zero vector of $\cK_\te$. And recalling equation \eqref{equation}, we get
\[
P_\te \left( \begin{array}{c}  -\theta(0) \\ 0\end{array} \right)
=
-\theta(0) P_\te \left( \begin{array}{c}  1 \\ 0\end{array} \right)
=
-\te(0) \left( \begin{array}{c} 1-\overline{\theta(0)}\theta (z)\\ -\overline{\theta(0)}\Delta (z) \end{array} \right).
\]
Summing up and recalling the definition \eqref{c} of $c$ we conclude \eqref{Mc_1}.
\end{proof}

\subsection{Freedom in the choice of the Clark operator 
\texorpdfstring{$\Phi_\gamma$}{Phi<sub>gamma}
}
Assume that a transcription for the model is fixed. 

Let $\theta_\gamma$ be a characteristic function (one of the representatives) of the operator $U_\gamma$, $|\gamma|<1$. We want to obtain a representation for the adjoint of a Clark operator, i.e.~for a unitary operator $\Phi_\gamma^*: L^2(\mu) \to \cK_{\theta_\gamma}$, 
\begin{align}
\label{Phi*-intertwine}
\Phi_\gamma^* U_\gamma = \cM_{\theta_\gamma} \Phi_\gamma^*. 
\end{align}
Note that such a unitary operator is not unique, one can, for example, multiply it by a unimodular constant. The proposition below says that with the exception of the case when $\gamma=0$ and $\mu$ is the normalized Lebesgue measure that is the only degree of freedom we have.

A unitary operator $\Phi^*_\gamma$ satisfying \eqref{Phi*-intertwine} maps $\fD_{U_\gamma}$ onto $\fD_{\cM_{\theta_\gamma}}$, and $\fD_{U_\gamma^*}$ onto $\fD_{\cM_{\theta_\gamma}^*}$. Take a unit vector in one of the defect subspaces, for example take $c\in \fD_{\cM_{\theta_\gamma}^*}$, $\|c\|=1$. Multiplying, if necessary, $\Phi^*_\gamma$ by an appropriate unimodular constant we find $\Phi_\gamma^*$ satisfying \eqref{Phi*-intertwine} such that $\Phi_\gamma^* b =c$. 

\begin{defn}
\label{d:c-agree}
We say that the unit vectors $c\in \fD_{\cM_{\theta_\gamma}^*}$ and $c_1\in \fD_{\cM_{\theta_\gamma}}$, $\|c\|=\|c_1\|=1$ \emph{agree} if there exists a unitary map $\Phi_\gamma^*: L^2(\mu)\to \cK_{\theta_\gamma}$ satisfying \eqref{Phi*-intertwine} such that 
\[
\Phi_\gamma^* b = c, \qquad \Phi_\gamma^* b_1 =c_1 .
\]
\end{defn}

Note that multiplying $\Phi^*_\gamma$ by a unimodular constant, if necessary, we can always assume without loss of generality that $\Phi_\gamma^* b=c$. 

\begin{prop}
\label{p:c-agree}
Let $\theta_\gamma$ be a characteristic function (one of the representatives) of the operator $U_\gamma$, $|\gamma|<1$. 
If $\gamma = 0$ and $\mu$ is the Lebesgue measure (so $\theta_\gamma\equiv 0$), then any unit vectors $c\in \fD_{\cM_{\theta_\gamma}^*}$ and $c_1\in \fD_{\cM_{\theta_\gamma}}$, $\|c\|=\|c_1\|=1$ agree. 

In all other cases given $c\in \fD_{\cM_{\theta_\gamma}^*}$, $\|c\|=1$ there exists a unique vector $c_1 \in \fD_{\cM_{\theta_\gamma}}$ such that $c$ and $c_1$ agree (of course, in this case $\|c_1\|=1$). 

Moreover
\begin{enumerate}
\item If $\gamma\ne 0$ and $\Phi_\gamma^* b=c$, the vector $c_1$ can be found as the unique vector in the one-dimensional subspace $\fD_{\cM_{\theta_\gamma}}$ satisfying the equation
\begin{align*}
\label{find:c_1-01}
(c_1, \cM_{\theta\gamma}^* c) = (\cM_{\theta\gamma} c_1,  c) = \gamma. 
\end{align*}

\item If $\mu$ is not the Lebesgue measure and $\Phi_\gamma^* b=c$, the vector $c_1$ can be found as the unique vector in the one-dimensional subspace $\fD_{\cM_{\theta_\gamma}}$ satisfying the equation 
\[
(c_1, \cM_{\theta_\gamma}^n c)\ci{\cK_{\theta_\gamma}} = \widehat \mu(n+1), 
\]
where $n= \min\{k\in\Z_+: \widehat \mu(k+1) \ne 0\}$. 
\end{enumerate}

\end{prop}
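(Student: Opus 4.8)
The plan is to exploit the two facts we already have in hand: first, that a unitary $\Phi_\gamma^*$ satisfying \eqref{Phi*-intertwine} is essentially unique (unique up to a unimodular constant, which we kill by imposing $\Phi_\gamma^*b=c$); and second, that $b$, $b_1$ are specific vectors in $L^2(\mu)$ — namely $b\equiv 1$, $b_1\equiv\overline\xi$ — so $b_1 = \overline\xi\, b = U^{-1}b$, and more generally $\xi^n b$ spans $L^2(\mu)$ with $(\xi^n b, b) = \widehat\mu(n)$. First I would observe that since $\Phi_\gamma^*$ intertwines $U_\gamma$ with $\cM_{\theta_\gamma}$ and is unitary, for any polynomial relation among the $U_\gamma^k b$ the images $\Phi_\gamma^* U_\gamma^k b$ satisfy the same relation; but the relations we actually want involve $U=U_1$, not $U_\gamma$. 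The key computational input is the formula $U_\gamma = U + (\gamma-1)bb_1^*$ from \eqref{f-Ugamma}, equivalently $U = U_\gamma - (\gamma-1)bb_1^*$, which lets us express $U^{-1}b$ (and iterates) in terms of $U_\gamma$ acting on $b$.

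\textbf{Existence and the two formulas.} Fix $c\in\fD_{\cM_{\theta_\gamma}^*}$ with $\|c\|=1$ and let $\Phi_\gamma^*$ be the unique unitary satisfying \eqref{Phi*-intertwine} with $\Phi_\gamma^* b = c$; set $c_1 := \Phi_\gamma^* b_1$. Since $\Phi_\gamma^*$ maps $\fD_{U_\gamma} = \spa\{b_1\}$ onto $\fD_{\cM_{\theta_\gamma}} = \spa\{c_1\}$ (by Lemma~\ref{l:DefectsModelOp}), this $c_1$ is a unit vector in the one-dimensional space $\fD_{\cM_{\theta_\gamma}}$, and by Definition~\ref{d:c-agree} it agrees with $c$; so existence is immediate. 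For uniqueness and the explicit description, I would compute $(c_1,\cM_{\theta_\gamma}^*c) = (\Phi_\gamma^* b_1, \cM_{\theta_\gamma}^*\Phi_\gamma^* b) = (\Phi_\gamma^* b_1, \Phi_\gamma^* U_\gamma^* b) = (b_1, U_\gamma^* b)\ci{L^2(\mu)}$, using the intertwining relation \eqref{Phi*-intertwine} (applied to $\Phi_\gamma^*$, which gives $\cM_{\theta_\gamma}^*\Phi_\gamma^* = \Phi_\gamma^* U_\gamma^*$) and unitarity. Now $U_\gamma^* b = U^* b + (\overline\gamma-1) b_1 (b^* b) = b_1 + (\overline\gamma - 1) b_1 = \overline\gamma\, b_1$ in $L^2(\mu)$ (since $U^* b = U^{-1}b = b_1$, $\|b\|=1$), so $(b_1, U_\gamma^* b) = \overline\gamma\|b_1\|^2 = \overline\gamma$, whence $(c_1,\cM_{\theta_\gamma}^*c) = (\cM_{\theta_\gamma}c_1, c) = \gamma$. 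This proves statement (i). Statement (ii): $(c_1,\cM_{\theta_\gamma}^n c) = (\Phi_\gamma^* b_1, \Phi_\gamma^*(U_\gamma^*)^n b) = (b_1, (U_\gamma^*)^n b)$; one then shows by induction, again using $U_\gamma^* b = \overline\gamma b_1$ and, crucially, the fact that when $\widehat\mu(k+1)=0$ for $k<n$ the perturbation term contributes nothing at the relevant steps, that $(b_1,(U_\gamma^*)^n b) = (b_1, \overline\xi^{\,n} b)\cdot(\text{unit factor})$; more directly, with $b_1=\overline\xi\, b$ we get $(b_1,(U_\gamma^*)^n b)\ci{L^2(\mu)} = (b, \xi\,(U_\gamma^*)^n b)$ and unwind $U_\gamma^* = \xi^{-1}(\I + (\overline\gamma - 1)b b_1^*)^* $ — here the bottom index $n=\min\{k\in\Z_+:\widehat\mu(k+1)\ne0\}$ is exactly the first moment at which the pure-shift contribution $\widehat\mu(n+1)$ survives, so $(c_1,\cM_{\theta_\gamma}^n c) = \widehat\mu(n+1)\ne 0$, pinning $c_1$ down uniquely in the one-dimensional space $\fD_{\cM_{\theta_\gamma}}$.

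\textbf{Uniqueness in general and the $\gamma=0$, Lebesgue exception.} The vector $c_1$ is always determined by $\Phi_\gamma^*b_1$, and $\Phi_\gamma^*$ is determined once $\Phi_\gamma^* b = c$ is fixed \emph{provided} the span condition forces it; the only way $c_1$ could fail to be unique is if \emph{every} unit vector in $\fD_{\cM_{\theta_\gamma}}$ agrees with $c$, i.e.\ if there is more than one $\Phi_\gamma^*$ with $\Phi_\gamma^* b = c$. Both scalar quantities $\gamma$ (from (i)) and $\widehat\mu(n+1)$ (from (ii)) determine $c_1$ uniquely as soon as at least one of them is nonzero — $\gamma\ne 0$, or $\widehat\mu(k)\ne 0$ for some $k\ge 1$. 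The latter fails precisely when $\widehat\mu(k) = 0$ for all $k\ge 1$, i.e.\ $\mu$ is normalized Lebesgue measure. Hence in every case except $\gamma = 0$ \emph{and} $\mu$ Lebesgue, uniqueness holds. In the exceptional case $\theta_\gamma\equiv 0$, the model space collapses and $\cM_{\theta_\gamma}$ is the bilateral shift on $L^2(\T)$ (unitarily $U$ itself via $\Phi$), so the intertwining condition leaves a full extra degree of freedom: any choice of $c_1$ compatible with $\|c_1\|=1$ can be realized by composing with an automorphism of $L^2(\mu)=L^2(\T)$ commuting with $M_\xi$ and fixing $b=1$; one checks such automorphisms act transitively on unit vectors of the (one-dimensional) relevant defect space, so all pairs agree.

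The main obstacle I anticipate is the bookkeeping in statement (ii): verifying carefully that the perturbation terms $(\gamma-1)bb_1^*$ accumulated in expanding $(U_\gamma^*)^n$ genuinely cancel out up to order $n$ because of the hypothesis $\widehat\mu(k+1)=0$ for $k<n$, so that $(b_1,(U_\gamma^*)^n b)$ reduces cleanly to $\widehat\mu(n+1)$ (times a harmless unimodular factor depending on $\gamma$, which one must track or absorb into the normalization of $c$). Everything else — existence, the defect-space identification, and the dichotomy for uniqueness — follows formally from the intertwining relation, unitarity, and Lemma~\ref{l:DefectsModelOp}.
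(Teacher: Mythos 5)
Your overall scheme --- normalize $\Phi_\gamma^* b=c$, set $c_1:=\Phi_\gamma^* b_1$, transport inner products through the intertwining relation, and use one-dimensionality of $\fD_{\cM_{\theta_\gamma}}$ for uniqueness --- is the paper's, and your treatment of (i) is essentially correct apart from a harmless conjugation slip ($(b_1,\overline\gamma b_1)=\gamma$, not $\overline\gamma$). The argument for (ii), however, breaks down. The intertwining relation gives $\cM_{\theta_\gamma}^n\Phi_\gamma^*=\Phi_\gamma^* U_\gamma^n$, so $(c_1,\cM_{\theta_\gamma}^n c)=(b_1,U_\gamma^n b)$, \emph{not} $(b_1,(U_\gamma^*)^n b)$ as you wrote, and your induction is then built around the wrong object: expanding $(U_\gamma^*)^n b$ via $U_\gamma^* b=\overline\gamma b_1$ and the vanishing of $\widehat\mu(1),\dots,\widehat\mu(n)$ yields $(U_\gamma^*)^n b=\overline\gamma\,\overline\xi^{\,n}$, hence $(b_1,(U_\gamma^*)^n b)=\gamma\,\overline{\widehat\mu(n-1)}$, which is $0$ for $n\ge 2$ and $\gamma$ for $n=1$ --- never $\widehat\mu(n+1)$ in general. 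Nor can the ``unimodular factor depending on $\gamma$'' be ``absorbed into the normalization of $c$'': the vector $c$ is given, $\Phi_\gamma^* b=c$ is already fixed, and the exact equation $(c_1,\cM_{\theta_\gamma}^n c)=\widehat\mu(n+1)$ is the whole point (it is what Proposition \ref{p:Phi-c1} later verifies for the concrete $c^\gamma$, $c_1^\gamma$); a unimodular factor would select a different $c_1$. The correct computation goes the other way: since $U_\gamma x=Ux$ whenever $x\perp b_1$, minimality of $n$ gives $U_\gamma^k b=U^k b$ for $k\le n$, so $(b_1,U_\gamma^n b)=(b_1,U^n b)=\widehat\mu(n+1)$, with no $\gamma$-dependence at all. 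Note also that without a correct (ii) your uniqueness assertion is unproved precisely in the case $\gamma=0$, $\mu$ not Lebesgue, where (i) gives no information.

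The exceptional case is also not handled correctly. For $\theta_\gamma\equiv0$ the model operator is not ``the bilateral shift on $L^2(\T)$'': $U_0$ is a completely non-unitary contraction, and in the Sz.-Nagy--Foia\c{s} transcription $\cK_0=H^2\oplus H^2_-$ with $\cM_0$ unitarily equivalent to $S\oplus S^*$. More seriously, the unitaries of $L^2(\T)$ commuting with $M_\xi$ are multiplications by unimodular functions, and the only one fixing $b\equiv\1$ is the identity; so the class of automorphisms you invoke is trivial, cannot act transitively on the defect direction, and taken literally your argument would prove uniqueness of $c_1$ --- the opposite of the claim. The freedom has to be exhibited in the commutant of $U_0$ (equivalently of $\cM_0$), as the paper does: for $\alpha,\beta\in\T$ define $\Phi_0^*\xi^n=\alpha z^n\oplus 0$ for $n\ge0$ and $\Phi_0^*\xi^n=0\oplus\beta z^n$ for $n<0$ on the orthonormal basis $\{\xi^n\}$ of $L^2(\T)$ (equivalently, compose one intertwiner with $\alpha I\ci{H^2}\oplus\beta I\ci{H^2_-}$, which commutes with $\cM_0$); this intertwines $U_0$ and $\cM_0$ and sends $b\mapsto\alpha c$, $b_1\mapsto\beta c_1$, so every unit vector of $\fD_{\cM_0}$ agrees with $c$.
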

\begin{proof}
The existence of $c_1$ follows from the model theory (existence of an intertwining unitary operator $\Phi_\gamma^*$ satisfying \eqref{Phi*-intertwine}). Multiplying $\Phi^*_\gamma$ by a unimodular constant, if necessary, we can assume without loss of generality that $\Phi_\gamma^* b=c$. We then define $c_1 :=\Phi^*_\gamma b_1$; because of the intertwining relation \eqref{Phi*-intertwine} $\Phi_\gamma^*$ maps the defect subspaces of $U_\gamma$ to the corresponding defect subspaces of $\cM_{\theta_\gamma}$, so $c_1\in\fD_{\cM_{\theta_\gamma}}$.

If $\gamma=0$ and $\mu $ is the Lebesgue measure, Lemma \ref{l:CharFunct-U_0} asserts that the characteristic function $\theta\equiv 0$. The model space $\cK_\theta$ is represented in this case in the Sz.-Nagy--Foia\c{s} transcription as 
\[
\cK_\theta = \{ g_+ \oplus g_- : g_+ \in H^2, \ g_-\in H^2_- \} . 
\]
The defect subspaces $\fD_{\cM_{\theta_\gamma}}$ and $\fD_{\cM_{\theta_\gamma}^*}$ are spanned by the vectors $c_1 = 0\oplus z^{-1}$ and $c=\1 \oplus 0$ respectively. 

If $\mu$ is the Lebesgue measure, then the system of exponents $e_n(\xi) = \xi^n$, $n\in\Z$ forms an orthonormal basis in $L^2(\mu)$, and it is easy to see that for any $\alpha, \beta \in\T$ the operator $\Phi_0^*$
\[
\Phi_0^* e_n =
\left\{
\begin{array}{ll} 
\alpha z^n \oplus 0, \qquad & n\ge 0, \\
 0\oplus \beta z^n,   & n< 0
\end{array}
\right. 
\]
satisfies \eqref{Phi*-intertwine} with $\gamma=0$. Then for this $\Phi_\gamma^*$ we have 
\[
\Phi_\gamma^* b = \alpha c, \qquad \Phi^*_\gamma b_1 = \beta c_1. 
\]

Let $\gamma\ne0$. Let $\Phi_\gamma^* :L^2(\mu)\to \cK_{\theta_\gamma}$ be a unitary operator satisfying \eqref{Phi*-intertwine} and such that $\Phi_\gamma^* b=c$ (as we discussed above, such an operator always exists). Since 
\[
U_\gamma^* b = \overline\gamma b_1
\]
the vector $c_1=\Phi_\gamma^* b_1$ must then satisfy 
\begin{align}
\label{find:c_1}
\cM_{\gamma_\theta}^* c=\overline\gamma c_1.
\end{align}
Because $\gamma\ne0$ vector $c_1$ is unique. Note, that since $\cM_{\theta_\gamma} c \in \fD_{\cM_{\theta_\gamma}^*}$, to check identity \eqref{find:c_1} it is enough to check that 
\[
(c_1, \cM_{\gamma_\theta}^* c) = (\cM_{\gamma_\theta} c_1,  c) = \gamma, 
\]
which is the equation in the statement \cond1 of the proposition.

Now, let $\mu$ be not the Lebesgue measure. 
Let $\Phi_\gamma^* b = c$ and let $c_1:=\Phi_\gamma^* b_1$ (clearly $c_1\in \fD_{\cM_{\theta_\gamma}}$ and $\|c_1\|=1$). 

Let $n\ge 0$ be smallest integer such that $(b_1, U_\gamma^n b) \ne 0$. 
To show that such  $n<\infty$ exists let us first note that if $x\perp b_1$ then 
\[
(b_1, U_\gamma x) = (b_1, Ux). 
\]
This implies that 
\begin{align}
\label{n-muHat}
n=\min\{k\in\Z_+: (b_1, U_\gamma^k b) \ne 0 \} = \min\{ k\in\Z_+ : (b_1, U^k b) \ne0\}. 
\end{align}
By the spectral theorem, the second representation means that 
\[
n = \min \{ k\in \Z_+ : \widehat \mu(k+1) \ne 0\}
\]
where $\widehat \mu(k) $ is the $k$th Fourier coefficient of the measure $\mu$. If $\widehat\mu(k+1) =0$ for all $k\in\Z_+$, then because $\mu$ is a real measure $\widehat\mu(k) =0$ for all $k\ne 0$, so the measure $\mu$ is the Lebesgue measure, which contradicts the assumption. So such $n<\infty$ exists. 

It also follows from the above reasoning that 
\[
(b_1, U_\gamma^n b) = (b_1, U^n b) = \widehat\mu(n+1) , 
\]
so $c_1$ must satisfy 
\[
(c_1, \cM_{\theta_\gamma}^n c) = \widehat \mu(n+1). 
\]
Since the defect subspace $\fD_{\cM_{\theta_\gamma}}$ is one-dimensional and $\widehat\mu(n+1)\ne0$ the above equation has unique solution $c_1\in \fD_{\cM_{\theta_\gamma}}$.
\end{proof}

The next proposition deals with the Sz.-Nagy--Foia\c{s} transcription. 
\begin{prop}
\label{p:Phi-c1}
Let the characteristic function be given by Lemma \ref{CharFunct-U_gamma}, and let $c^\gamma$ and $c_1^\gamma$ be defined by \eqref{c}, \eqref{c_1} with $\theta=\theta_\gamma$. Then the vectors $c^\gamma$ and $c_1^\gamma$ agree. 
\end{prop}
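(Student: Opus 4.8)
The plan is to verify that the explicit vectors $c^\gamma$ and $c_1^\gamma$ of \eqref{c}, \eqref{c_1} (taken with $\theta=\theta_\gamma$) satisfy the characterization of agreeing defect vectors supplied by Proposition~\ref{p:c-agree}, and then conclude by its uniqueness part. By Lemma~\ref{l:DefectsModelOp}, $c^\gamma$ and $c_1^\gamma$ are unit vectors spanning $\fD_{\cM_{\theta_\gamma}^*}$ and $\fD_{\cM_{\theta_\gamma}}$, and they satisfy $\cM_{\theta_\gamma}c_1^\gamma=-\theta_\gamma(0)c^\gamma$ by \eqref{Mc_1}. A Clark operator $\Phi_\gamma^*$ exists by general model theory, and after multiplication by a suitable unimodular constant we may assume $\Phi_\gamma^* b=c^\gamma$; then Proposition~\ref{p:c-agree} produces a \emph{unique} $c_1\in\fD_{\cM_{\theta_\gamma}}$ agreeing with $c^\gamma$ (except when $\gamma=0$ and $\mu$ is Lebesgue, in which case every unit defect vector agrees and there is nothing to prove). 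The single computation driving the argument is $\theta_\gamma(0)=-\gamma$, which is immediate from Lemma~\ref{CharFunct-U_gamma}, or from \eqref{theta_0-theta_gamma} together with $\theta_0(0)=0$ (the latter because $R_1\mu(0)=0$).

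For $\gamma\ne0$ I would combine $\theta_\gamma(0)=-\gamma$ with \eqref{Mc_1} to obtain $\cM_{\theta_\gamma}c_1^\gamma=\gamma c^\gamma$, whence $(\cM_{\theta_\gamma}c_1^\gamma,c^\gamma)=\gamma$ since $\|c^\gamma\|=1$. This is precisely the equation characterizing the agreeing vector $c_1$ in part~\cond1 of Proposition~\ref{p:c-agree}; as $c_1^\gamma$ is a unit vector in the one-dimensional space $\fD_{\cM_{\theta_\gamma}}$, it must coincide with $c_1$, so $c^\gamma$ and $c_1^\gamma$ agree.

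The value $\gamma=0$ needs a separate treatment, since \eqref{Mc_1} then only gives $\cM_{\theta_0}c_1^0=0$ and determines nothing. Assuming $\mu$ is not the Lebesgue measure (the Lebesgue case being trivial by Proposition~\ref{p:c-agree}), $\theta_0(0)=0$ forces $c^0=\kf{1}{0}$ and $c_1^0=\kf{z^{-1}\theta_0}{z^{-1}\Delta_0}$, and the relevant criterion is part~\cond2 of Proposition~\ref{p:c-agree} with $n=\min\{k\in\Z_+:\widehat\mu(k+1)\ne0\}$. I would compute $(c_1^0,\cM_{\theta_0}^n c^0)$ using the standard fact $\cM_{\theta_0}^n=P_{\theta_0}M_z^n\big|_{\cK_{\theta_0}}$ — valid because $M_z^jf-\cM_{\theta_0}^jf\in\kf{\theta_0}{\Delta_0}H^2\perp\cK_{\theta_0}$ for all $f\in\cK_{\theta_0}$ and $j\ge0$, by the $M_z$-invariance of $\kf{H^2}{\clos\Delta_0 L^2}$ and of $\kf{\theta_0}{\Delta_0}H^2$ — so that, since $c_1^0\in\cK_{\theta_0}$,
\[
(c_1^0,\cM_{\theta_0}^n c^0)=(c_1^0,M_z^n c^0)=\int_\T z^{-1}\theta_0(z)\,\overline{z^n}\,\frac{|dz|}{2\pi}=\widehat{\theta_0}(n+1),
\]
the $(n+1)$-st Taylor coefficient of $\theta_0$. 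Finally, by Lemma~\ref{l:CharFunct-U_0},
\[
\theta_0=\frac{R_1\mu}{1+R_1\mu}=R_1\mu-(R_1\mu)^2+\cdots,\qquad R_1\mu(\la)=\sum_{k\ge1}\widehat\mu(k)\la^k,
\]
and by the choice of $n$ the lowest nonzero Taylor coefficient of $R_1\mu$ is $\widehat\mu(n+1)$ in degree $n+1$, while $(R_1\mu)^2$ and higher powers contribute only from degree $2(n+1)>n+1$; hence $\widehat{\theta_0}(n+1)=\widehat\mu(n+1)$, which is exactly the equation characterizing the agreeing vector in part~\cond2 of Proposition~\ref{p:c-agree}. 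So $c_1^0$ coincides with it and $c^0,c_1^0$ agree.

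The only step requiring genuine care is this last case: one needs to be comfortable that the model operator satisfies $\cM_{\theta_0}^n=P_{\theta_0}M_z^n|_{\cK_{\theta_0}}$, to keep the Fourier-coefficient normalization consistent so that the $\widehat\mu(n+1)$ of Proposition~\ref{p:c-agree} really equals the Taylor coefficient extracted from $R_1\mu$, and to verify that the tail $-(R_1\mu)^2+\cdots$ of $(1+R_1\mu)^{-1}$ cannot contribute at degree $n+1$. All other steps are immediate consequences of Lemma~\ref{l:DefectsModelOp}, Lemma~\ref{CharFunct-U_gamma} and Proposition~\ref{p:c-agree}.
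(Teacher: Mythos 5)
Your proof is correct and follows essentially the same route as the paper: verifying, for the explicit vectors of \eqref{c}, \eqref{c_1}, the criteria \cond1 and \cond2 of Proposition \ref{p:c-agree} using $\theta_\gamma(0)=-\gamma$, the dilation property $\cM_{\theta}^n=P_\theta M_z^n|_{\cK_\theta}$, and the power-series expansion of $\theta$ via $R_1\mu$. The only (harmless) differences are cosmetic: for $\gamma\ne0$ you invoke \eqref{Mc_1} directly instead of recomputing $(zc_1^\gamma,c^\gamma)$, and you run the Fourier-coefficient argument only for $\gamma=0$ (the paper does it for general $\gamma$, getting $\widehat\theta_\gamma(n+1)=(1-|\gamma|^2)\widehat\mu(n+1)$), which suffices given your case split.
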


This proposition is trivial if $\gamma=0$ and $\mu$ is the Lebesgue measure, because  in this case, by Proposition \ref{p:c-agree} any two unit vectors in the corresponding defect spaces agree. 
 
\begin{proof}[Proof of Proposition \ref{p:Phi-c1}]
If $\gamma\ne 0$ it is sufficient to check statement \cond1 of Proposition \ref{p:c-agree}. Note that since $\cM_\theta f = P_\theta (zf)$ and $c\in\cK_\theta$, 
\[
(\cM_\theta c_1, c) = (z c_1, c). 
\]
Using identities \eqref{c}, \eqref{c_1} and recalling that $\theta=\theta_\gamma$ implies $\theta_\gamma(0)=-\gamma$, we obtain
\begin{align*}
(z c_1, c) & = (1-|\gamma|^2)^{-1} \bigl( 
\left(\begin{array}{c} \theta \\ \Delta \end{array} \right) + 
\left(\begin{array}{c} \gamma \\ 0 \end{array} \right), 
\left(\begin{array}{c} 1+ \overline\gamma\theta \\ \overline\gamma\Delta \end{array} \right)
\bigr)\ci{L^2_{\C^2}}
\\
& = 
(1-|\gamma|^2)^{-1} \bigl( 
\left(\begin{array}{c} \gamma \\ 0 \end{array} \right), 
\left(\begin{array}{c} 1+ \overline\gamma\theta \\ \overline\gamma\Delta \end{array} \right)
\bigr)\ci{L^2_{\C^2}}
\\
& = 
(1-|\gamma|^2)^{-1} \gamma \bigl(\1, \1+\overline\gamma \theta \bigr)\ci{L^2} = 
(1-|\gamma|^2)^{-1} \gamma (1+\gamma\overline{\theta(0)}) = \gamma;
\end{align*}
here the second equality holds because $\left(\begin{array}{c} \theta \\ \Delta \end{array} \right)  \perp\cK_\theta$, and in the last line we again used the fact that $\theta(0) =-\gamma$. 

If $\mu$ is not the Lebesgue measure we can check the identity from the statement \cond2 of Proposition \ref{p:c-agree}. 
It is a standard and a well known fact of the  functional model theory that 
\begin{align}
\label{dilation}
\cM_\theta^k = P_\theta M_z^k \bigm|_{\textstyle \K_\theta};
\end{align}
the ``high brow'' explanation is that all the functional models are constructed in such a way that the operator $M_z$ in the ambient space is a unitary dilation of the model operator $\cM_\theta$. In the
Sz.-Nagy--Foia\c{s} transcription one can check \eqref{dilation} directly: 
it  easily follows from the fact that $\cK_\theta \oplus \left( \begin{array}{c} \theta \\ \Delta \end{array}\right) H^2$ and  $\left( \begin{array}{c} \theta \\ \Delta \end{array}\right) H^2$ are $M_z$-invariant subspaces, we leave the details as an exercise for the reader.

Direct computations using \eqref{c}, \eqref{c_1} give us
\begin{align}
\label{c,z^nc_1}
(c_1, z^n c) = (1-|\gamma|^2)^{-1} \widehat \theta(n+1). 
\end{align}
Using the first representation for $\theta_\gamma$ (the one involving $R_1$) from Lemma \ref{CharFunct-U_gamma} we can easily see that 
\begin{align}
\label{hatTheta}
\widehat \theta_\gamma(n+1) = (1-|\gamma|^2) \widehat\mu(n+1). 
\end{align}
Indeed, 
\[
R_1\mu(z) = \sum_{k=1}^\infty \widehat \mu(k) z^k =\sum_{k=n+1}^\infty \widehat \mu(k) z^k, 
\]
so dividing the power series in the representation from Lemma \ref{CharFunct-U_gamma} we get \eqref{hatTheta}. 

Combining \eqref{c,z^nc_1} and \eqref{hatTheta} we obtain
\[
(c_1, \cM_{\theta_\gamma}^n c) = (c_1, z^n c) = \widehat\mu(n+1), 
\]
which is exactly the identity from the statement \cond2 of Proposition \ref{p:c-agree}.
\end{proof}

\section{A 
\texorpdfstring{``}{"}universal'' 
representation formula for 
\texorpdfstring{$\Phi_\gamma^*$}{Phi<sub>gamma<sup>*} 
and some of its properties}
\label{s:UnivRepr}

In the next theorem we fix an arbitrary transcription of the functional model. 

\begin{theo}[A ``universal'' representation formula]
\label{t-reprB}
Let $\theta_\gamma$  be a characteristic function (one representative) of $U_\gamma$,  $|\gamma|<1$, and let $\cK_{\theta_\gamma}$ and $\cM_\gamma =\cM_{\theta_\gamma}$ be the model subspace and the model operator respectively. 
Assume that the vectors $c=c^\gamma\in\fD_{\cM_{\theta_\gamma}^*}$, $c_1=c_1^\gamma \in \fD_{\cM_{\theta_\gamma}}$  $\|c^\gamma\|=\|c_1^\gamma\|=1$ agree. 
Let $\Phi^*=\Phi^*_\gamma: L^2(\mu) \to \cK_{\theta_\gamma}$ be a unitary operator satisfying 
\[
\Phi_\gamma^* U_\gamma = \cM_{\theta_\gamma} \Phi_\gamma^* , 
\]
and such that $\Phi_\gamma^* b = c^\gamma$, $\Phi_\gamma^* b_1 = c_1^\gamma$. 

Then for all $f\in C^1(\T)$
\begin{align}
\label{f-reprB}
\Phi_\gamma^* f(z)&= A_\gamma(z) f(z) +
B_\gamma(z)\int\frac{f(\xi)-f(z)}{1- \overline\xi z}\, d\mu(\xi)
\end{align}
where $A_\gamma(z) = c^\gamma(z)$, $B_\gamma(z) = c^\gamma(z) - z c_1^\gamma(z)$.
\end{theo}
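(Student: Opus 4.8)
The plan is to verify formula \eqref{f-reprB} by checking that both sides define the same unitary operator, which (since $b$ is cyclic for $U_\gamma$) reduces to checking agreement on the dense set $\{U_\gamma^n b: n\in\Z\}$ together with the intertwining relation. First I would observe that the right-hand side of \eqref{f-reprB} defines a bounded linear map $\Psi_\gamma: C^1(\T)\to L^2(E_*\oplus E, W)$ (the integral converges for $f\in C^1$ since $\frac{f(\xi)-f(z)}{1-\overline\xi z}$ is bounded on $\T\times\T$, using $|1-\overline\xi z|\gtrsim |\xi - z|$ and the Lipschitz bound on $f$). Then the strategy is: (i) show $\Psi_\gamma b = c^\gamma$, which is immediate because $b\equiv 1$ makes the integral vanish, leaving $A_\gamma(z)\cdot 1 = c^\gamma(z)$; (ii) show $\Psi_\gamma b_1 = c_1^\gamma$, using $b_1(\xi) = \overline\xi$ so that $\frac{b_1(\xi) - b_1(z)}{1-\overline\xi z} = \frac{\overline\xi - \overline z}{1 - \overline\xi z}$; a short computation should turn $A_\gamma(z)\overline z + B_\gamma(z)\int\frac{\overline\xi - \overline z}{1-\overline\xi z}d\mu(\xi)$ into $c_1^\gamma(z)$ — here one uses $\mu(\T)=1$ and the definitions $A_\gamma = c^\gamma$, $B_\gamma = c^\gamma - zc_1^\gamma$, so that the $\overline z$-terms are arranged to leave exactly $\overline z \cdot z c_1^\gamma = c_1^\gamma$ modulo a term that integrates to zero; (iii) establish the intertwining identity $\Psi_\gamma U_\gamma = \cM_{\theta_\gamma}\Psi_\gamma$ on $C^1(\T)$.

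The heart of the argument is step (iii). I would compute $U_\gamma f = M_\xi f + (\gamma - 1) b\, b_1^* f = \xi f(\xi) + (\gamma-1)\langle f, b_1\rangle_{L^2(\mu)}$, where $\langle f, b_1\rangle = \int \xi f(\xi)\,d\mu(\xi) = \widehat{(f\,d\mu)}(-1)$ roughly speaking; note $\xi f \in C^1(\T)$ as well, so $\Psi_\gamma(U_\gamma f)$ makes sense. On the other side, $\cM_{\theta_\gamma}\Psi_\gamma f = P_{\theta_\gamma} M_z \Psi_\gamma f = P_{\theta_\gamma}\big(z A_\gamma(z) f(z) + z B_\gamma(z)\int\frac{f(\xi)-f(z)}{1-\overline\xi z}d\mu(\xi)\big)$. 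The key algebraic manipulation is the Cauchy-kernel identity
\begin{align*}
z\cdot\frac{f(\xi)-f(z)}{1-\overline\xi z} &= \frac{zf(\xi) - zf(z)}{1-\overline\xi z} = \frac{\xi f(\xi) - z f(z)}{1-\overline\xi z} + \frac{(z-\xi)f(\xi)}{1-\overline\xi z},
\end{align*}
and since $\frac{z-\xi}{1-\overline\xi z} = \xi\cdot\frac{\overline\xi z - 1}{1-\overline\xi z} = -\xi$ on $\T$ (because $|\xi|=1$), the last term is $-\xi f(\xi)$, whose $\mu$-integral is exactly $-\langle f, b_1\rangle$. Thus $z\Psi_\gamma f$ becomes $z A_\gamma f + B_\gamma\int\frac{(\xi f(\xi)) - (zf(z))}{1-\overline\xi z}d\mu(\xi) - \langle f,b_1\rangle B_\gamma$, i.e. it has exactly the form of $\Psi_\gamma$ applied to $\xi f$ but with the leading coefficient $zA_\gamma$ instead of $A_\gamma$, plus a correction $-\langle f,b_1\rangle B_\gamma$ plus terms supported on the "outside" of $\cK_{\theta_\gamma}$. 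Matching this against $\Psi_\gamma(U_\gamma f) = \Psi_\gamma(\xi f) + (\gamma-1)\langle f,b_1\rangle\Psi_\gamma(b) = \Psi_\gamma(\xi f) + (\gamma-1)\langle f,b_1\rangle c^\gamma$ then reduces the whole identity, after applying $P_{\theta_\gamma}$, to two facts: that $(z A_\gamma - A_\gamma)f$ and $(z B_\gamma - B_\gamma)(\cdots)$ project correctly — which should follow from $\cM_{\theta_\gamma} c_1^\gamma = -\theta_\gamma(0)c^\gamma = \gamma c^\gamma$ (Lemma \ref{l:DefectsModelOp} together with $\theta_\gamma(0) = -\gamma$) and $P_{\theta_\gamma}(z c^\gamma) = \cM_{\theta_\gamma}c^\gamma$ together with the defining property of the defect vectors — and that the correction term $-\langle f,b_1\rangle B_\gamma = -\langle f,b_1\rangle(c^\gamma - zc_1^\gamma)$ combines with $(\gamma - 1)\langle f,b_1\rangle c^\gamma$ correctly after projection, using $P_{\theta_\gamma}(z c_1^\gamma) = \cM_{\theta_\gamma}c_1^\gamma = \gamma c^\gamma$. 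Collecting: the $\langle f,b_1\rangle$-coefficient on the model side is $P_{\theta_\gamma}(-c^\gamma + zc_1^\gamma) = -c^\gamma + \gamma c^\gamma = (\gamma-1)c^\gamma$, matching perfectly.

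I expect the main obstacle to be bookkeeping rather than conceptual: carefully tracking which pieces of $zA_\gamma(z)f(z)$ and $z B_\gamma(z)(\text{integral})$ lie in $\cK_{\theta_\gamma}$ versus its orthogonal complement inside the ambient $L^2(W)$, so that $P_{\theta_\gamma}$ can be applied cleanly — the point being that $\Psi_\gamma(\xi f)$ already has its top/bottom components in the right spaces, so one must show the "extra" pieces $(z-1)A_\gamma f$ etc. project to multiples of $c^\gamma$ via the established relations $\cM_{\theta_\gamma}c^\gamma$, $\cM_{\theta_\gamma}c_1^\gamma = \gamma c^\gamma$. A secondary technical point is justifying that $\Psi_\gamma$ actually maps into $\cK_{\theta_\gamma}$ (not merely the ambient space) — but this follows a posteriori once we know $\Psi_\gamma b = c^\gamma \in \cK_{\theta_\gamma}$, the intertwining relation, and cyclicity of $b$, since then $\Psi_\gamma(U_\gamma^n b) = \cM_{\theta_\gamma}^n c^\gamma \in \cK_{\theta_\gamma}$ for $n\ge 0$ and similarly for $n<0$ using that $\cM_{\theta_\gamma}$ is invertible on... no — rather using $\Phi_\gamma^*$'s existence from model theory to identify $\Psi_\gamma$ with $\Phi_\gamma^*$ on the dense set $\{U_\gamma^n b\}$ and hence conclude $\Psi_\gamma = \Phi_\gamma^*|_{C^1(\T)}$, which simultaneously gives that $\Psi_\gamma$ extends to the claimed unitary. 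Once steps (i)–(iii) are in place, this last identification is the clean finish.
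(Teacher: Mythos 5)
Your overall strategy (define $\Psi_\gamma$ as the right-hand side of \eqref{f-reprB} and identify it with the abstractly given $\Phi_\gamma^*$) is legitimate, and your algebra in steps (i), (ii) and the kernel identity $z\frac{f(\xi)-f(z)}{1-\overline\xi z}=\frac{\xi f(\xi)-zf(z)}{1-\overline\xi z}-\xi f(\xi)$ is correct. The genuine gap is the reduction ``since $b$ is cyclic for $U_\gamma$ it suffices to check agreement on $\{U_\gamma^n b:n\in\Z\}$.'' For $|\gamma|<1$ the operator $U_\gamma$ is a completely non-unitary \emph{contraction}: for $\gamma=0$ it is not even injective (its kernel is $\spa\{b_1\}$), so the negative powers in your set do not exist, and for $\gamma\ne0$ one has $U_\gamma^{-1}\ne U_\gamma^*$, so nothing you prove controls $\Psi_\gamma$ on a backward orbit. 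Your step (iii), the forward intertwining, propagates the agreement $\Psi_\gamma b=\Phi_\gamma^* b$ only along $\{U_\gamma^n b:n\ge0\}$, whose linear span is exactly the analytic polynomials; their closure in $L^2(\mu)$ is in general a proper subspace (for Lebesgue $\mu$ it is $H^2$), so this does not determine $\Psi_\gamma$ on all trigonometric polynomials, let alone on $C^1(\T)$. To capture the antianalytic directions you need a second, independent computation with $U_\gamma^*$ and $\cM_{\theta_\gamma}^*$ --- this is precisely the second half of the paper's proof, where a separate representation of $\cM_{\theta_\gamma}^*$ is derived (and, as the paper emphasizes, it is \emph{not} the formal adjoint of the representation of $\cM_{\theta_\gamma}$, because $zc_1^\gamma\notin\cK_{\theta_\gamma}$ in general). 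For $\gamma\ne0$ one could alternatively invert the forward relation using invertibility of $\cM_{\theta_\gamma}$, but the case $\gamma=0$ would still require the adjoint argument, and none of this is in your proposal.

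Two further points are glossed over. First, in (iii) you apply $\cM_{\theta_\gamma}=P_{\theta_\gamma}M_z\bigm|_{\cK_{\theta_\gamma}}$ to $\Psi_\gamma f$ before knowing $\Psi_\gamma f\in\cK_{\theta_\gamma}$; this can be patched (work with $P_{\theta_\gamma}M_z$ in the ambient space, or run the induction along the orbit where $\Psi_\gamma(U_\gamma^n b)=\Phi_\gamma^*(U_\gamma^n b)\in\cK_{\theta_\gamma}$ by the inductive hypothesis), but as written the step is circular with your closing identification. Second, even granting agreement on all trigonometric polynomials, the passage to $f\in C^1(\T)$ is not automatic: $\Psi_\gamma$ is not known to be bounded from $L^2(\mu)$ (that is essentially what the theorem asserts), so you must take trigonometric polynomials $p_k\to f$ with $p_k'\to f'$ uniformly, use the estimate $\left|\frac{(f-p_k)(\xi)-(f-p_k)(z)}{1-\overline\xi z}\right|\le\frac{\pi}{2}\|f'-p_k'\|_\infty$ together with $A_\gamma,B_\gamma$ lying in the ambient $L^2$ space to control the right-hand side, and the $L^2(\mu)$-boundedness of $\Phi_\gamma^*$ for the left-hand side. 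This approximation argument closes the paper's proof and is absent from yours. By contrast, the paper does not verify the formula but derives it: it computes $\Phi_\gamma^*\xi^n$ and $\Phi_\gamma^*\overline\xi^n$ from projection-free formulas for $\cM_{\theta_\gamma}$ and $\cM_{\theta_\gamma}^*$ by induction and summation of a geometric series, which automatically covers both the analytic and antianalytic directions.
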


\begin{rem}
Let the representation of $\theta_\gamma$ be as given in Lemma \ref{CharFunct-U_gamma}. 
Then by  Proposition \ref{p:Phi-c1} the vectors $c^\gamma$ and $c_1^\gamma$ given by \eqref{c} and \eqref{c_1} with $\theta=\theta_\gamma$ agree.

By Remark \ref{l:theta_0-theta_gamma}, we have $\te_\gamma(0) = -\gamma$, so
for the above $c^\gamma$ and $c_1^\gamma$ 
the functions $A_\gamma$ and $B_\gamma$ are given in the Sz.-Nagy--Foia\c{s} transcription (for $z\in\T$) by
\begin{align}
\label{A(z)}
A_\gamma(z) & = c^\gamma(z) = (1-|\gamma|^2)^{-1/2} 
\left(\begin{array}{c} 1 + \overline\gamma  \theta_\gamma(z) \\ \overline\gamma \Delta_\gamma(z) \end{array}\right) 
= \left(\begin{array}{c}  \frac{(1-|\gamma|^2)^{1/2}}{1 - \overline\gamma  \theta_0(z)} \\ \frac{\overline\gamma\Delta_0(z)}{| 1- \overline\gamma \theta_0(z)|} \end{array}\right) 
\\
\label{B(z)}
B_\gamma(z) & = c^\gamma(z) - z c_1^\gamma(z)  = (1-|\gamma|^2)^{-1/2} 
\left(\begin{array}{c} 1 + (\overline\gamma  -1)\theta_\gamma(z) -\gamma\\ (\overline\gamma -1)\Delta_\gamma(z) \end{array}\right)
\\
\notag
&  \qquad\qquad =
\left(\begin{array}{c}  (1-|\gamma|^2)^{1/2}
{(1-\theta_0(z))}/{(1 - \overline\gamma  \theta_0(z))} \\ (\overline\gamma -1) 
{\Delta_0(z)}/{| 1- \overline\gamma \theta_0(z)|} \end{array}\right) 
 . 
\end{align}
\end{rem}

\begin{proof}[Proof of Theorem \ref{f-reprB}]
First, let us notice that one can write the model operator $\cM_{\theta_\gamma} = P_{\theta_\gamma} M_z \bigm|_{\textstyle \cK_{\theta_\gamma}}$ without explicitly using the projection $P_{\theta_\gamma}$. Namely, denoting $c_2^\gamma(z) = zc_1^\gamma(z)$ we can see that on $\cK_{\theta_\gamma}$ 
\begin{align}
\label{M_theta-01}
\cM_{\theta_\gamma}  & = M_z -c_2^\gamma (c_1^\gamma)^* - \theta_\gamma(0) c^\gamma (c_1^\gamma)^*  \\
\notag
& = M_z + (\gamma c^\gamma - c_2^\gamma )(c_1^\gamma)^*  . 
\end{align}
Indeed, $\cM_\theta f = M_z f$ for $\cK_\theta\cap c_1^\perp$,   
so the formula holds for $f\in \cK_{\theta_\gamma}\cap (c_1^\gamma)^\perp$.   It is not hard to compute that $\cM_\theta c_1= \theta(0) c$, so, since $M_z c_1 - c_2 c_1^* c_1 =0$, the first line of \eqref{M_theta-01} holds for $f=c_1$. For the second line the equality $\gamma=-\theta(0)$ was used. 

Using the above formula we can rewrite identity $\Phi_\gamma^* U_\gamma = \cM_{\theta_\gamma} \Phi^*_\gamma$ as 
\begin{align*}
\Phi^*_\gamma U_1 + (\gamma-1) c^\gamma b_1^* = M_z \Phi_\gamma^* + (\gamma c^\gamma -c_2^\gamma) b_1^*
\end{align*}
(here we used that $\Phi^*_\gamma b =c^\gamma$ and $(c_1^\gamma)^* \Phi_\gamma^* = (\Phi_\gamma c_1^\gamma)^* = b_1^*$), so 
\begin{align}
\label{CommRel}
\Phi_\gamma^* U_1 = M_z \Phi_\gamma^*  + (c^\gamma-c_2^\gamma) b_1^*. 
\end{align}

Right multiplying \eqref{CommRel} by $U_1$ and using \eqref{CommRel} for  $\Phi^* U_1$ in $M_z\Phi^* U_1$ we get
\[
\Phi^*_\gamma U_1^2 = M_z^2 \Phi^*_\gamma + M_z (c^\gamma-c_2^\gamma) b_1^* + (c^\gamma-c^\gamma_2)(U_1^* b_1)^* . 
\]
Right multiplying the obtained identity by $U_1$ and repeating this procedure we get by induction that for all $n\ge 1$
\[
\Phi^*_\gamma U_1^n = M_z^n \Phi^*_\gamma + \sum_{k=1}^n M_z^{k-1} (c^\gamma-c_2^\gamma)\left( (U_1^*)^{n-k} b_1 \right)^* . 
\]
Let us now apply both sides of this formula to the vector $b\equiv \1 \in L^2(\mu)$.  Note that  
$ (U_1^*)^{n-k} b_1 \equiv \xi^{-(n-k+1)} $, so 
\[
\left( (U_1^*)^{n-k} b_1 \right)^*  b = \int_\T \xi^{n+k-1} d\mu(\xi). 
\]
Since $U_1^n b \equiv \xi^n$ and $\Phi^*_\gamma b = c^\gamma$ we get (using $\xi$ and $z$ for the independent variables in $L^2(\mu) $ and $\cK_\theta$ respectively) that
\[
\left( \Phi^*_\gamma \xi^n\right) (z) = z^n c^\gamma(z) + (c^\gamma(z) - c^\gamma_2(z) ) \int_\T \left(\sum_{k=1}^n z^{k-1}\xi^{n-k+1} \right) d\mu(\xi). 
\]
Summing the geometric progression we get 
\[
\sum_{k=1}^n z^{k-1}\xi^{n-k+1} = \frac{\xi^n - z^n}{1-\overline \xi z} , 
\]
so the representation formula \eqref{f-reprB} holds for any analytic monomial $f(\xi) = \xi^n$, $n\in\N$. Note that it also holds trivially for $f(\xi) \equiv 1$, so \eqref{f-reprB} holds for all (analytic) polynomials in $\xi$. 

Now, let us show that the representation formula \eqref{f-reprB} holds for all trigonometric polynomials. We proved it for polynomials in $\xi$, so it remains to prove the formula for polynomials in $\overline \xi$. 

We start with the representation
\begin{align}
\label{M_theta-02}
\cM^*_{\theta_\gamma}  & = M_{\overline z} - M_{\overline z} c^\gamma (c^\gamma)^* - \overline{\theta(0)} c_1^\gamma (c^\gamma)^* 
\\
\notag 
       & = M_{\overline z} + (\overline \gamma c_1^\gamma - M_{\overline z} c^\gamma ) (c^\gamma)^* .
\end{align}
Note that since generally $c_2^\gamma\notin \cK_{\theta_\gamma}$ we cannot get the formula for $\cM_{\theta_\gamma}^*$  by taking the adjoint in \eqref{M_theta-01}, and, as one can see, \eqref{M_theta-02} is not the formal adjoint of \eqref{M_theta-01}. Instead, to get \eqref{M_theta-02} we essentially repeat the reasoning we used to get \eqref{M_theta-01}. 

Namely, for $f\in \cK_{\theta} \cap c^\perp$ we have $ \cM_\theta^* f = M_{\overline z} f$, so the first line of \eqref{M_theta-02} holds for $f\in \cK_{\theta_\gamma}\cap (c^\gamma)^\perp$. 

Since $\cM_{\theta}^* c =\overline{ \theta(0)}c_1$ and $M_{\overline z} c - M_{\overline z} c c^* c =0$, the first line of \eqref{M_theta-02} holds for $f=(c^\gamma)^*$ as well. The second line of  \eqref{M_theta-02} follows from the fact that $\theta_\gamma(0) =-\gamma$.

Substituting \eqref{M_theta-02} into the identity $\Phi_\gamma^* U_\gamma^* = \cM_\theta^* \Phi_\gamma^*$ we get 
\begin{align}
\label{Surprise}
\Phi^*_\gamma U_1^* + (\overline\gamma -1) c_1^\gamma b^* = M_{\overline z} \Phi^*_\gamma + (\overline \gamma c_1^\gamma - M_{\overline z} c^\gamma) b^*, 
\end{align}
so
\begin{align}
\label{AdjCommRel}
\Phi^*_\gamma U_1^* & = M_{\overline z} \Phi^*_\gamma + (c_1^\gamma - M_{\overline z} c^\gamma) b^* \\ \notag
             & = M_{\overline z} \Phi^*_\gamma - M_{\overline z} (c^\gamma - c_2^\gamma) b^*, 
\end{align}
where, recall, $c_2^\gamma(z) = z c_1^\gamma(z)$.  Right multiplying \eqref{AdjCommRel} by $U_1^*$ and using  \eqref{AdjCommRel} for $\Phi^*_\gamma U_1^*$ in the right side we get
\[
\Phi^*_\gamma (U_1^*)^2 = (M_{\overline z})^2 \Phi^*_\gamma - (M_{\overline z})^2 (c^\gamma-c_2^\gamma) b^* - M_{\overline z} (c^\gamma - c_2^\gamma) (U_1b)^*  .
\]
Right multiplying the obtained identity by $U_1^*$ and repeating this procedure we get by induction that for all $n\ge 1$
\[
\Phi^*_\gamma (U_1^*)^n = (M_{\overline z})^n \Phi^*_\gamma  - \sum_{k=1}^n (M_{\overline z})^k (c^\gamma-c_2^\gamma) (U_1^{n-k} b)^*
\]
We act as before and apply this formula to the vector $b_1\equiv 1 \in L^2(\mu)$. Then using the identity
\[
-\sum_{k=1}^n (\overline z)^k (\overline \xi)^{n-k} = \frac{ (\overline \xi)^n - (\overline z)^n }{1-\overline \xi z }\,,
\]
we see that the representation formula \eqref{f-reprB} holds for the antianalytic monomials $f(\xi) = (\overline \xi)^n$, $n\in\Z_+$. The representation formula for the analytic monomials is already proved, so by linearity \eqref{f-reprB} holds for all trigonometric polynomials. 

To extend this formula to $C^1(\T)$ we use standard approximation reasoning, similar to one used in \cite{mypaper}. For $f\in C^1(\T)$ take a sequence of trigonometric polynomials $\{p_k\}$ such that $p_k\to f$,  $p_k'\to f'$ uniformly on $\T$. Then $p_k\to f$ in $L^2(\mu)$. 
Note also that $|p_k'|\le C$ (with constant $C<\infty$ independent of $k$).  
We substitute the polynomials  $p_k$ into \eqref{f-reprB} and take the limit as $k\to \infty$.  
Since $\Phi^*_\gamma$ is a bounded operator, the convergence $p_k\to f$ in $L^2(\mu)$ implies  that  $\Phi^{\ast}_\gamma p_k\to \Phi^{\ast}_\gamma f$ in $\K_{\theta_\gamma}$.

To investigate the convergence in the right hand side, consider the functions $f_k:=f-p_k$. We know that $f_k\rightrightarrows 0$, $f_k'\rightrightarrows 0$. 
By the intermediate valued theorem we have
\begin{align*}
|f_k(\xi)- f_k(z)| \le \|f_k'\|_\infty |I_{\xi,z}| \qquad\forall \xi,z\in\T,
\end{align*}
where $I_{\xi, z}\subset \T$ is the shortest arc connecting $\xi $ and $z$. Since $|I_{\xi,z}|\le\frac\pi2 |\xi-z|$ we conclude that 
\[
\left| \frac{f_k(\xi)-f_k(z)}{1-\overline \xi z} \right| \le \frac\pi2 \|f_k'\|_\infty \to 0 \qquad \text{as } k\to \infty, 
\]
so
\begin{align}
\label{IntUnigConv-01}
\int\frac{p_k(\xi)-p_k(z)}{1-\overline \xi z}\, d\mu(\xi)
\,\,\, \rightrightarrows \,\,\,
\int\frac{f(\xi)-f(z)}{1-\overline\xi z}\, d\mu(\xi)
\qquad
z\in \T.
\end{align}
Recall that our model is in the ambient space $L^2(W) = L^2(\C^2, W)$ (consisting of functions on the circle $\T$ with values in $\C^2$) with a matrix weight $W$. Clearly for a bounded scalar-valued function $\f$ we have in this space that 
\[
\|\f f\|\ci{L^2(W)} \le \|\f\|_\infty \|f\|\ci{L^2(W)} .
\]  

Note that $A_\gamma, \ B_\gamma\in L^2(W)$. 
So, substituting $p_k$ in \eqref{f-reprB} and using the uniform convergence in \eqref{IntUnigConv-01} and the uniform convergence $p_k\rightrightarrows f$ we get  that the right side converges  (in $L^2(W)$ norm) as $k\to \infty$ to the right side of  \eqref{f-reprB} for $f\in C^1$.  The convergence $\Phi_\gamma^* p_k \to \Phi_\gamma^* f$ (in $\cM_{\theta_\gamma}$)  in the left side was already proved before. 
\end{proof}

%

\section{Singular integral operators and a representation formula for 
\texorpdfstring{$\Phi_\gamma^*$}{Phi<sub>gamma<sup>*} 
adapted to Sz.-Nagy--%
\texorpdfstring{Foia\c{s}}{Foias} 
transcription}
\label{s:AltRepr}

In the Sz.-Nagy--Foia\c{s} transcription the first entry $g_1$ of the vector $\left(\begin{array}{c} g_1\\ g_2 \end{array}\right) \in\cK_\theta$ belongs to the Hardy space $H^2$. The space $H^2$ can be described as the space of functions analytic in the unit disc $\D$. In this section we give a representation formula for $\Phi_\gamma^*$ in the Sz.-Nagy--Foia\c{s} transcription, which is adapted to the $H^2$ theory. 

We will need some facts about singular integral operators and their regularizations obtained by the authors in 
\cite{mypaper, Regularizations2013}. 

\subsection{Singular integral operators and regularizations}
\label{s:SIO-reg}
Let $\mu$ and $\nu$ be Radon measures on a Haussdorff space $\cX$, and let $K(\fdot, \fdot)$ be a \emph{singular kernel} on $\cX\times\cX$, meaning that $K$ is locally bounded off the ``diagonal'' $\{ (s,t)\in\cX\times\cX: s=t\}$ but can blow up approaching the diagonal.  

Following the standard terminology we say that a bounded operator $T:L^2(\mu)\to L^2(\nu)$ is a \emph{singular integral operator with kernel} $K(s,t)$ if for any bounded functions $f$ and $g$ with disjoint compact supports
\begin{align}
\label{SIO-ScalarPairing}
(Tf, g)\ci{L^2(\nu)} = \int_{\cX\times \cX} K(s,t) f(t) \overline{g(s)} d\mu(t) d\nu(s) . 
\end{align}
One can generalize this notion to the case when our $L^2$ spaces are vector-valued spaces $L^2(\mu, E_1)$ and $L^2(\nu,E_2)$  with values in separable Hilbert spaces $E_1$ and $E_2$ respectively. The values $K(s,t)$ in this case are  bounded linear operators from $E_1$ to $E_2$, and for bounded functions $f$ and $g$ with disjoint compact supports  
\begin{align}
\label{SIO-VectorPairing}
(Tf, g)\ci{L^2(\nu)} = \int_{\cX\times \cX} \Bigl( K(s,t) f(t) , {g(s)} \Bigr)_{E_2} d\mu(t) d\nu(s) . 
\end{align}

Let us interpret the representation formula \eqref{f-reprB} in the Sz.-Nagy--Foia\c{s} transcription as a singular integral operator.

\begin{prop}
\label{p:KernelPhi*}
The operator $\Phi^*_\gamma:L^2(\mu)\to L^2(\C^2)$ in the Sz.-Nagy--Foia\c{s} transcription is a singular integral operator with kernel $\wt K$, 
\[
\wt K(z, \xi) = B_\gamma(z) \frac{1}{1-\overline\xi z}   .
\]
\end{prop}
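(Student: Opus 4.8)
The plan is to extract the singular-integral-operator kernel directly from the ``universal'' representation formula \eqref{f-reprB} of Theorem \ref{t-reprB}, specialized to the Sz.-Nagy--Foia\c{s} transcription. Recall that in this transcription the ambient space is $L^2(\C^2)$ (unweighted), so we must verify the pairing identity \eqref{SIO-VectorPairing} with $E_1 = \C$, $E_2 = \C^2$, $\nu = $ normalized Lebesgue measure on $\T$, and $K(z,\xi) = \wt K(z,\xi) = B_\gamma(z)\,(1-\overline\xi z)^{-1}$. The first step is to fix bounded functions $f \in L^2(\mu)$ and $g \in L^2(\C^2)$ with disjoint compact supports in $\T$, say $\supp f \cap \supp g = \varnothing$; by a density/approximation argument I may further assume $f \in C^1(\T)$ (approximate $f$ by $C^1$ functions supported away from $\supp g$, using that $\Phi^*_\gamma$ is bounded and the kernel is bounded on $\supp g \times \supp f$ since the diagonal is avoided).

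Next, with $f \in C^1$ I apply \eqref{f-reprB}: $\Phi_\gamma^* f(z) = A_\gamma(z) f(z) + B_\gamma(z)\int \frac{f(\xi)-f(z)}{1-\overline\xi z}\,d\mu(\xi)$. Now pair against $g$ in $L^2(\C^2)$. The term $A_\gamma(z) f(z)$ contributes $\int_\T (A_\gamma(z)f(z), g(z))_{\C^2}\,\frac{|dz|}{2\pi}$; since $\supp f \cap \supp g = \varnothing$, the integrand vanishes pointwise, so this term drops out entirely. Similarly, inside the double integral the piece $-B_\gamma(z) f(z)(1-\overline\xi z)^{-1}$ pairs to $\int_\T \big(B_\gamma(z)f(z)\,\overline{\,\textstyle\int (1-\overline\xi z)^{-1}d\mu(\xi)}, g(z)\big)$-type expression — more carefully, it produces an integrand supported on $\supp f \cap \supp g = \varnothing$ after one integration, hence also vanishes. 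Wait — I should be careful: the $f(z)$ factor multiplies $g(z)$ in the outer integral, so that piece is supported on $\supp f\cap\supp g=\varnothing$ and indeed vanishes. What survives is exactly $\int_\T\int_\T \big(B_\gamma(z)\frac{1}{1-\overline\xi z}f(\xi), g(z)\big)_{\C^2}\,d\mu(\xi)\,\frac{|dz|}{2\pi}$, which is precisely \eqref{SIO-VectorPairing} with $K = \wt K$. I also need to record that $\Phi^*_\gamma$ is bounded (it is unitary) so it qualifies as a singular integral operator once the kernel representation is verified, and that $\wt K$ is genuinely a singular kernel, i.e.\ locally bounded off the diagonal $\{z = \xi\}$: this is clear because $B_\gamma \in L^\infty$ on $\T$ (it is bounded: $A_\gamma, B_\gamma$ are bounded functions, as one sees from the explicit formulas \eqref{A(z)}, \eqref{B(z)} together with $|\theta_\gamma|\le 1$, $\Delta_\gamma \le 1$) and $(1-\overline\xi z)^{-1}$ is bounded when $z$ and $\xi$ stay a positive distance apart on $\T$.

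The only genuinely delicate point — and the step I expect to be the main obstacle — is the justification of the interchange of integration (Fubini) and, more subtly, the reduction to $f \in C^1$: one must ensure that when $f$ is merely bounded with compact support disjoint from $\supp g$, the quantity $(\Phi^*_\gamma f, g)$ depends continuously enough on $f$ to pass the $C^1$ identity to the limit. Since $\Phi^*_\gamma$ is bounded on $L^2(\mu)$ and $g \in L^2(\C^2)$, the map $f \mapsto (\Phi^*_\gamma f, g)$ is an $L^2(\mu)$-continuous linear functional; approximating $f$ in $L^2(\mu)$ by $C^1$ functions $f_k$ with supports contained in a fixed compact neighborhood of $\supp f$ still disjoint from $\supp g$ handles the left side, while on the right side the kernel $\wt K(z,\xi)$ is uniformly bounded on the (compact, diagonal-avoiding) product $\supp g \times \supp f_k$, so dominated convergence gives the convergence of the double integral. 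This mollification-with-support-control is routine but is where all the care lives. Everything else is bookkeeping: plugging in \eqref{f-reprB}, killing the $A_\gamma f$ and $B_\gamma f$ terms by the disjoint-support hypothesis, and reading off $\wt K$.
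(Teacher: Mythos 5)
Your proposal is correct and follows essentially the same route as the paper: plug the universal formula \eqref{f-reprB} into the pairing for $C^1$ functions with disjoint supports, note that the terms carrying the factor $f(z)$ vanish identically there, and then pass to bounded $f,g$ by approximation using the boundedness of $\Phi^*_\gamma$ and the boundedness of the kernel off the diagonal. The only cosmetic difference is that the paper mollifies both $f$ and $g$ while you approximate only $f$ (keeping $g$ merely bounded), which is an equally valid, if anything slightly cleaner, way to run the same limiting argument.
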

Note that the kernel in this general case is a $2\times 1$ matrix-function; if the measure $\mu$ is purely singular, then the ``second floor'' of the model collapses and the kernel is scalar. 

\begin{proof}[Proof of Proposition \ref{p:KernelPhi*}]
The identity \eqref{SIO-VectorPairing} for the operator $\Phi_\gamma^*$ and the kernel $\wt K$ clearly holds for $f, g\in C^1(\T)$ with disjoint compact supports.  Let now $f$ and $g$ be bounded functions  with disjoint compact supports. 
Then by taking convolutions of $f$ and $g$ with appropriate smooth mollifiers we get $f_n, g_n\in C^1(\T)$, $f_n \to f $ in $L^2(\mu)$, $g_n\to g$ in $L^2$ as $n\to \infty$ and 
\[
\dist(\supp f_n , \supp g_n ) \ge \delta >0 \qquad \forall n. 
\]
Taking limit as $n\to\infty$ we immediately  get from \eqref{SIO-VectorPairing} for $f_n$ and $g_n$ the corresponding identity for $f$ and $g$.  
\end{proof}

\begin{defn}
\label{d:RestrBdd}
Following \cite{Regularizations2013} we say that a singular kernel $K$ is \emph{restrictedly bounded} (more precisely $L^2(\mu)\to L^2(\nu)$ restrictedly bounded) if 
\[
(Tf, g)\ci{L^2(\nu)} \le C \|f\|\ci{L^2(\mu)}\|g\|\ci{L^2(\nu)}.
\]
The best constant $C$ in the estimate is called the \emph{restricted norm} and denoted by $\|K\|\ti{restr}$. 
\end{defn}

\begin{cor}
The kernel $\wt K$ from Proposition \ref{p:KernelPhi*} is ($L^2(\mu)\to L^2$) restrictedly bounded with the restricted norm at most $1$. Equivalently, the kernel $K$, 
\begin{align*}
K(z, \xi) = \frac{1}{1- \overline\xi z}
\end{align*}
is restrictedly bounded from $L^2(\mu)$ to $L^2(v)$, $v(z) = |B_\gamma(z) |^2$, $z\in \T$ with the restricted norm at most $1$. 
\end{cor}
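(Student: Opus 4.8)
The plan is to read off both assertions from facts already in hand, with essentially no analysis involved: that $\Phi_\gamma^*$ is a \emph{unitary} operator and that, by Proposition \ref{p:KernelPhi*}, it is the singular integral operator with kernel $\wt K$ in the Sz.-Nagy--Foia\c{s} transcription. The corollary is then a restatement of $\|\Phi_\gamma^*\|=1$ together with an isometric change of variables between the vector-valued space $L^2(\C^2)$ and the scalar weighted space $L^2(v)$, $v=|B_\gamma|^2$.

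First I would prove that $\wt K$ is $L^2(\mu)\to L^2(\C^2)$ restrictedly bounded with restricted norm at most $1$. For bounded $f,g$ with disjoint compact supports, Proposition \ref{p:KernelPhi*} gives that the right-hand side of \eqref{SIO-VectorPairing} with kernel $\wt K$ equals $(\Phi_\gamma^* f,g)\ci{L^2(\C^2)}$. Since $\Phi_\gamma^*$ is unitary it is in particular isometric into $L^2(\C^2)$, so Cauchy--Schwarz yields $|(\Phi_\gamma^* f,g)\ci{L^2(\C^2)}|\le\|\Phi_\gamma^* f\|\ci{L^2(\C^2)}\|g\|\ci{L^2(\C^2)}=\|f\|\ci{L^2(\mu)}\|g\|\ci{L^2(\C^2)}$. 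By Definition \ref{d:RestrBdd} this is exactly restricted boundedness of $\wt K$ with $\|\wt K\|\ti{restr}\le 1$.

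Next I would establish the equivalence with the scalar statement. Write $\wt K(z,\xi)=B_\gamma(z) K(z,\xi)$ with $K(z,\xi)=\frac1{1-\overline\xi z}$, and observe that $B_\gamma(z)\ne 0$ for a.e.\ $z\in\T$: by the Remark following Theorem \ref{t-reprB} the first entry of $B_\gamma$ is $(1-|\gamma|^2)^{1/2}(1-\theta_0(z))/(1-\overline\gamma\theta_0(z))$, whose denominator never vanishes (as $|\overline\gamma\theta_0|\le|\gamma|<1$) and whose numerator $1-\theta_0$ vanishes only on a null set, since $\theta_0$ is purely contractive, hence not the constant function $1$. Consequently $J:\phi\mapsto \phi B_\gamma$ is an isometry from $L^2(v)$ into $L^2(\C^2)$, with adjoint $J^*G=(G,B_\gamma)\ci{\C^2}/|B_\gamma|^2$ a contraction $L^2(\C^2)\to L^2(v)$. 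A one-line computation on functions with disjoint compact supports (where both sides of \eqref{SIO-VectorPairing} are absolutely convergent integrals) shows that the $\wt K$-pairing and the $K$-pairing, the latter for $K$ regarded as the kernel of an operator $L^2(\mu)\to L^2(v)$ (i.e.\ paired against the weight $v$), are related by $(\wt Tf,G)\ci{L^2(\C^2)}=(T_Kf,J^*G)\ci{L^2(v)}$, where $\wt T$, $T_K$ denote the corresponding operators. Taking $G=J\phi$ and using that $J$ is norm-preserving while $J^*$ is a contraction gives $\|\wt K\|\ti{restr}=\|K\|\ti{restr}$, the latter taken as an $L^2(\mu)\to L^2(v)$ restricted norm; combined with the previous paragraph this yields $\|K\|\ti{restr}\le 1$, which is the corollary.

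I do not anticipate a genuine obstacle here; the whole content is the unitarity of $\Phi_\gamma^*$ plus bookkeeping. The only two points needing a sentence of care are (a) verifying $B_\gamma\ne 0$ a.e., so that passing to the weight $v=|B_\gamma|^2$ loses no information (handled via the uniqueness theorem for bounded analytic functions applied to $\theta_0$), and (b) keeping the identification of the two bilinear forms strictly at the level of functions with disjoint compact supports, before invoking Definition \ref{d:RestrBdd}.
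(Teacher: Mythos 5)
Your proposal is correct and follows essentially the same route as the paper: restricted boundedness of $\wt K$ with norm at most $1$ comes directly from the unitarity of $\Phi_\gamma^*$ via Proposition \ref{p:KernelPhi*}, and the passage to the scalar kernel $K$ with weight $v=|B_\gamma|^2$ is the same change of variables the paper performs by substituting $g=B_\gamma\psi$ (you merely formalize it as the isometry $J\phi=\phi B_\gamma$ and its adjoint). The extra observations (that $B_\gamma\ne0$ a.e., and the full equality $\|\wt K\|\ti{restr}=\|K\|\ti{restr}$) are harmless but not needed for the stated conclusion.
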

\begin{proof}
The first statement is trivial, because the kernel of a bounded singular operator is restrictedly bounded with the restricted norm being at most the norm of the operator. 

So, we know that 
\begin{align}
\label{K-pairing-01}
\left|
\int_\T \bigl( \wt K(z, \xi) f(\xi) , g(z) \bigr)\ci{\C^2} d\mu(\xi) dm(z) 
\right|
\le \|f\|\ci{L^2(\mu)} \|g\|\ci{L^2(\C^2)}. 
\end{align}
for all bounded $f$ and $g$ functions with disjoint compact supports. Nothing changes if we consider only the functions $g$ which are $0$ whenever $B(z)=0$,  and which are of form $g= (B/|B|)\f$, where $\f\in L^2$ (scalar-valued), $\|\f\|_2 = \|g\|\ci{L^2(\C^2)}$. Denoting $\psi =\f/|B|$ and noticing that
\[
\|\psi\|\ci{L^2(v)} = \|\f\|\ci{L^2} =\|g\|\ci{L^2(\C)}
\]
we can using $g:=B\psi$ rewrite \eqref{K-pairing-01} as 
\[
\left|
\int_\T  K(z, \xi) f(\xi) \overline{\psi(z)}  v(z) d\mu(\xi) dm(z) 
\right|
\le \|f\|\ci{L^2(\mu)} \|\psi\|\ci{L^2}, 
\]
which is exactly the restricted boundedness of $K$. 
\end{proof}

\begin{theo}
\label{t:UniformBds-K_r}
Let as above $v(z) = | B_\gamma(z) |^2$, $z\in \T$. 
Then for $r\in [0,\infty ) \setminus \{1\}$  the operators $T_r:L^2(\mu) \to L^2(v)$ with kernels $K_r$, 
\[
K_r(z, \xi) =  \frac{1}{1- r \overline\xi z} 
\]
are uniformly (in $r$) bounded. 
\end{theo}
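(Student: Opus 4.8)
The plan is to reduce the uniform boundedness for $r\in[0,\infty)\setminus\{1\}$ to the already-established restricted boundedness at $r=1$ (the Corollary above), together with an elementary analysis of the ``regular'' part that appears when $r\ne 1$. The first step is to write, for $z,\xi\in\T$,
\[
K_r(z,\xi) = \frac{1}{1-r\overline\xi z} = \frac{1}{1-\overline\xi z} + \left(\frac{1}{1-r\overline\xi z} - \frac{1}{1-\overline\xi z}\right)
= K_1(z,\xi) + \frac{(r-1)\overline\xi z}{(1-r\overline\xi z)(1-\overline\xi z)}.
\]
The first term gives exactly the operator $T_1$, which is bounded $L^2(\mu)\to L^2(v)$ with norm at most $1$ by the Corollary, uniformly (it does not depend on $r$). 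So it suffices to bound the operator $S_r$ with kernel $L_r(z,\xi):=\dfrac{(r-1)\overline\xi z}{(1-r\overline\xi z)(1-\overline\xi z)}$ uniformly for $r$ bounded away from $1$.

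For $r$ in a compact subinterval of $[0,1)$ this is immediate: the denominator $1-r\overline\xi z$ stays bounded away from $0$, so $|L_r(z,\xi)|\le C_r/|1-\overline\xi z|$, and $1/|1-\overline\xi z|$ defines a bounded operator $L^2(\mu)\to L^2(v)$ since it is dominated in absolute value by the restrictedly-bounded kernel $K_1$ (and $K_1$, being the kernel of the bounded operator $\Phi_\gamma^*$ composed with a coordinate projection, is genuinely bounded, not merely restrictedly so). Actually, to be careful about the distinction between restricted and genuine boundedness I would instead use that the diagonal singularity of $L_r$ is milder: $|r-1|/|1-r\overline\xi z|$ is bounded uniformly for $r\in[0,1)$ near $1$ — indeed $|1-r\overline\xi z|\ge 1-r$ and $|1-r\overline\xi z|\ge c|1-\overline\xi z|$ on $\T$ for a dimension-free $c>0$ — so $|L_r(z,\xi)|\le C/|1-\overline\xi z|$ with $C$ independent of $r\in[0,1)$. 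The operator with kernel $1/|1-\overline\xi z|$ from $L^2(\mu)$ to $L^2(v)$ is bounded; one way to see this is to note $v(z)=|B_\gamma(z)|^2\le C|1-z|^2$ near the points where the denominator vanishes — wait, that is not quite right either; a cleaner route is: $S_r = T_1 - $ (something), so actually $S_r$ inherits boundedness from $T_1$ and $T_r$ jointly only once we handle $r>1$.

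The genuinely delicate range is $r>1$, where $1-r\overline\xi z$ can vanish only if $r\overline\xi z=1$, impossible for $r>1$ and $|\xi|=|z|=1$; so again $|1-r\overline\xi z|\ge r-1>0$, and for $r\ge 1+\delta$ we get $|L_r(z,\xi)|\le C_\delta/|1-\overline\xi z|$. The remaining issue is $r\to 1^+$ and $r\to 1^-$, i.e.\ the behaviour as $|r-1|\to 0$. Here the factor $|r-1|$ in the numerator of $L_r$ is crucial: $\dfrac{|r-1|}{|1-r\overline\xi z|}$ is uniformly bounded because $|1-r\overline\xi z|^2 = (1-r)^2 + 2r(1-\re(\overline\xi z)) \ge (1-r)^2$, hence $\dfrac{|r-1|}{|1-r\overline\xi z|}\le 1$. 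Therefore $|L_r(z,\xi)|\le \dfrac{1}{|1-\overline\xi z|}$ with an absolute constant, uniformly over \emph{all} $r\in[0,\infty)\setminus\{1\}$, and we are reduced to showing the single operator with positive kernel $1/|1-\overline\xi z|$ is bounded $L^2(\mu)\to L^2(v)$. This last fact I would obtain by comparison with the $r=1$ estimate: taking absolute values, Schur-test style, and using that the Corollary's restricted bound upgrades to a genuine bound for $\Phi_\gamma^*$ (which \emph{is} a bounded operator, being unitary), whose kernel restricted to the bottom coordinate is $B_\gamma(z)/(1-\overline\xi z)$, so $|1/(1-\overline\xi z)|\cdot|B_\gamma(z)|$ pairs boundedly; squaring and integrating against $dm(z)$ gives exactly $L^2(\mu)\to L^2(v)$ boundedness of $1/|1-\overline\xi z|$.

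The main obstacle I anticipate is the careful bookkeeping around the two notions of boundedness (restricted vs.\ genuine) and making sure the comparison kernel $1/|1-\overline\xi z|$ — which has a \emph{non-oscillating}, hence strictly larger, singularity than $1/(1-\overline\xi z)$ — still yields a bounded operator into the \emph{weighted} space $L^2(v)$. The oscillation in $1/(1-\overline\xi z)$ was what made $\Phi_\gamma^*$ bounded; replacing it by its modulus could a priori destroy this. The resolution is that the weight $v=|B_\gamma|^2$ vanishes (to appropriate order) precisely at the atoms/singularities where the unweighted kernel would fail, and this vanishing is exactly strong enough: this is the content I would extract from the explicit formula \eqref{B(z)} for $B_\gamma$ together with the boundary behaviour of $\theta_0$ and $\Delta_0$. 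If a fully self-contained argument proves awkward, the fallback is to invoke the regularization results of \cite{Regularizations2013} cited at the start of this section, which were designed precisely to pass from restricted boundedness of a Cauchy-type kernel to uniform boundedness of its one-parameter regularizations $K_r$.
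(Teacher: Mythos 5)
There is a genuine gap, and it is fatal to the route you chose. After splitting off $L_r$ you dominate $|L_r(z,\xi)|\le 1/|1-\overline\xi z|$ and propose to conclude by showing that the operator with the \emph{positive} kernel $1/|1-\overline\xi z|$ is bounded from $L^2(\mu)$ to $L^2(v)$. That operator is not bounded in general --- it is not even well defined: take $\gamma=0$ and $\mu$ the Lebesgue measure, so that $\theta_0\equiv 0$ and $v=v_0\equiv 2$; then $\int_\T |1-\overline\xi z|^{-1}\,dm(\xi)=+\infty$ for every $z\in\T$, so the kernel applied to $f\equiv\1$ diverges everywhere, and the weight $v$ certainly does not vanish to rescue you. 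The cancellation coming from the oscillation of $1/(1-\overline\xi z)$ is essential (exactly as for the Hilbert transform versus the kernel $1/|x-y|$), and taking absolute values destroys it; your own caveat about this is the correct instinct, but the proposed resolution via vanishing of $v=|B_\gamma|^2$ is false. A second, related misstep is the opening claim that the Corollary gives a bounded operator $T_1$ of norm at most $1$: the Corollary only asserts \emph{restricted} boundedness of the kernel $K_1$ (pairings of boundedly supported $f,g$ with disjoint supports), no operator $T_1$ is ever defined, and the passage from restricted to genuine boundedness is precisely the nontrivial point; the tool for it (Theorem 3.4 of \cite{Regularizations2013}) requires $K\in L^2\ti{loc}(\mu\times\nu)$ and so does not apply to $K_1$ directly, only to the bounded kernels $K_r$, $r\ne1$. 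So the decomposition $K_r=K_1+L_r$ cannot be read as a decomposition of operators.

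The paper's argument avoids absolute values entirely: it observes that multiplying a kernel by $(\overline\xi z)^n$ does not change the restricted norm, and that for $r\ne 1$ the multiplier $\frac{1-\overline\xi z}{1-r\overline\xi z}$ expands in powers of $(\overline\xi z)^{\pm 1}$ with coefficients whose absolute sum is at most $2$; hence each $K_r$ is restrictedly bounded with restricted norm at most $2\|K\|\ti{restr}\le 2$, uniformly in $r\in[0,\infty)\setminus\{1\}$. Since $K_r$ is a bounded function, $T_r$ is Hilbert--Schmidt, and Theorem \ref{t:restr-bd1} (the regularization result from \cite{Regularizations2013} you mention only as a fallback) upgrades the uniform restricted bound to a uniform operator bound (at most $4$). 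If you want to salvage your write-up, replace the positive-kernel domination by this multiplier (Wiener-norm) argument; that is the missing idea.
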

\begin{proof}
First notice, that multiplication of $K$ by $(\overline \xi z)^n$, $n\in \Z$,  does not increase its ($L^2(\mu)$ to $L^2(v)$) restricted norm  (because $(\overline \xi z)^n$ is a product of unimodular functions of $\xi$ and of $z$). 

Then for $0\le r <1$ multiplying $K$ by
\[
1 + \sum_{n=1}^\infty (r^n- r^{n-1}) (\overline \xi z)^n = \frac{1-\overline\xi z}{1-r\overline \xi z}
\]
we at most double the restricted norm (because $1+\sum_{n=1}^\infty |r^n-r^{n-1}| = 2$). 
So, the kernel
\[
K(z, \xi) \cdot \frac{1-\overline\xi z}{1-r\overline \xi z} = \frac{1}{1-r\overline \xi z}
\]
has (for $0\le r<1$) the restricted norm at most $2$. 

If $r>1$ we can write
\begin{align}
\label{SchMult-01}
\frac{1-\overline\xi z}{1-r\overline \xi z} = 1-\sum_{n=1}^\infty(r^{-n} - r^{-(n+1)} ) (\overline \xi z)^{-n}.
\end{align}
Noticing that $1+\sum_{n=1}^\infty |r^{-n} - r^{-(n+1)}| = 1 + r^{-1}\le 2$ we see that in the case $r>1$ 
multiplying $K$ by the function \eqref{SchMult-01} we no more than double the restricted norm. 

Therefore, the restricted norm of the kernels $K_r$, $K_r(z, \xi) =1/(1-r\overline\xi z)$ for $r\in \R_+\setminus\{1\}$ is at most $2$. 

The kernels $K_r$ are bounded, so the corresponding integral operators $T_r$ are well defined. 
Since $K_r\in L^2(\mu\times vdm)$, the operators $T_r$ are Hilbert--Schmidt, and, in particular, compact. 

To complete the proof we use the following result from \cite{Regularizations2013}, see Theorem 3.4 there:
\begin{theo}
\label{t:restr-bd1}
Let $\mu$ and $\nu$ be Radon measures in $\R^N$ without common atoms. Assume that a kernel $K\in L^2\ti{loc} (\mu\times \nu)$ is $L^p$ restrictedly bounded, with the restricted norm $C$. Then the integral operator with $T$ kernel $K$ is a bounded operator $L^p(\mu)\to L^p(\nu)$ with the norm at most $2C$.
\end{theo}

This theorem implies immediately that the operators $T_r:L^2(\mu)\to L^2(v)$ with kernel $K_r$, $r\ne 1$ have the norm at most $4$ (uniformly in $r$). 
\end{proof}

Recall that the Cauchy type operator $R$ was defined  by 
\[
R\nu(z) = \int_{\T} \frac{d\nu(\xi)}{1-\overline{\xi} z }  \,, \qquad z\in \C\setminus\T; 
\]
here   $\nu$ is a (complex) measure of finite variation on $\T$. It is a well-known fact that $K\nu(z)$ (restricted to the disc $\D$ or its exterior) cf \cite{NONTAN}, \cite{poltsara2006}  has non-tangential boundary values a.e.~(with respect to Lebesgue measure) on $\T$. 

For $f\in L^1(\mu)$ let $T_+ f$ and $T_-f$ be the non-tangential boundary values of $(Rf\mu)(z)$ as $z$ approaches $\T$ from inside and from outside of the disc $\D$ respectively.

\begin{prop}
\label{p:wot}
The operators $T_{\pm}:L^2(\mu)\to L^2(v)$ are  bounded and moreover
\[
T_{\pm} = \textup{w.o.t.-}\lim_{r\to 1^\mp} T_r
\]
where \textup{w.o.t.}~means the weak operator topology in the space $B(L^2(\mu), L^2(v))$. 
\end{prop}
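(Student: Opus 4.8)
The plan is to realize $T_\pm$ as the pointwise radial boundary values of the family $\{T_r\}$ and then promote a.e.\ convergence to convergence in the weak operator topology using the uniform bounds of Theorem~\ref{t:UniformBds-K_r}. The starting point is the elementary identity
\[
T_r f(z)=\int_\T \frac{f(\xi)}{1-r\overline\xi z}\,d\mu(\xi)=\bigl(R(f\mu)\bigr)(rz),\qquad z\in\T,
\]
valid for every $f\in L^2(\mu)\subset L^1(\mu)$ (recall $\mu(\T)=1$, so $f\mu$ is a finite complex measure). Thus $T_rf$ is the restriction to $\T$ of the Cauchy transform $R(f\mu)$, evaluated on the circle of radius $r$; for $r<1$ the point $rz$ approaches $z$ radially from inside $\D$, and for $r>1$ from outside. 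Since $R(f\mu)$ has non-tangential — in particular radial — boundary values a.e.\ with respect to Lebesgue measure $m$ on $\T$ (the fact recalled just before the statement), we get $T_rf(z)\to T_+f(z)$ as $r\to1^-$ and $T_rf(z)\to T_-f(z)$ as $r\to1^+$ for $m$-a.e.\ $z\in\T$, hence also $v\,dm$-a.e.\ since $v\,dm\ll m$.

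Next I would establish boundedness of $T_\pm$. By Theorem~\ref{t:UniformBds-K_r} there is a constant $C<\infty$ (one may take $C=4$) with $\|T_rf\|_{L^2(v)}\le C\|f\|_{L^2(\mu)}$ for all $r\ne1$. Feeding the a.e.\ convergence $T_rf\to T_\pm f$ into Fatou's lemma yields $\|T_\pm f\|_{L^2(v)}\le C\|f\|_{L^2(\mu)}$, so in particular $T_\pm f\in L^2(v)$ and $T_\pm\colon L^2(\mu)\to L^2(v)$ is bounded with norm at most $C$.

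The last step is the weak operator topology convergence. Since $r$ runs over a real parameter, it suffices to prove that along every sequence $r_n\to1^-$ (resp.\ $r_n\to1^+$) one has $(T_{r_n}f,g)\ci{L^2(v)}\to(T_\pm f,g)\ci{L^2(v)}$ for all $f\in L^2(\mu)$, $g\in L^2(v)$. For fixed $f$ the functions $T_{r_n}f$ are bounded in $L^2(v)$ and converge $v\,dm$-a.e.\ to $T_\pm f$; since $v\in L^1(m)$ the measure $v\,dm$ is finite, so an Egorov-type argument — split the pairing over a large set $E$ where the convergence is uniform and over $E^c$, controlling the latter by the uniform bound $(C\|f\|\ci{L^2(\mu)}+\|T_\pm f\|\ci{L^2(v)})\,\|g\,\mathbf{1}_{E^c}\|\ci{L^2(v)}$ with $\|g\,\mathbf{1}_{E^c}\|\ci{L^2(v)}\to0$ — shows $T_{r_n}f\rightharpoonup T_\pm f$ weakly in $L^2(v)$. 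This gives the desired convergence of pairings, and since the sequence $r_n$ was arbitrary, $T_\pm=\textup{w.o.t.-}\lim_{r\to1^\mp}T_r$.

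I expect this final step to be the main obstacle: upgrading pointwise a.e.\ convergence together with a uniform operator-norm bound to convergence in the weak operator topology. It is routine but genuinely uses the finiteness of $v\,dm$, and it is also where it becomes visible that nothing stronger (such as strong operator convergence, which would need a dominating function or a maximal estimate) can be expected — which is precisely why only the weak operator topology appears, and, as the later sections show, why a naive ``outside'' limit produces an unbounded operator.
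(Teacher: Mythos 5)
Your proposal is correct and follows essentially the same route as the paper: uniform boundedness of the $T_r$ from Theorem \ref{t:UniformBds-K_r} combined with the a.e.\ (nontangential, hence radial) convergence $T_r f \to T_\pm f$, plus the standard principle that an $L^2$-bounded, a.e.\ convergent sequence can only have its a.e.\ limit as weak limit. The only cosmetic difference is that the paper argues by contradiction, extracting a weakly convergent subsequence on a bounded set and citing Lemma 3.3 of \cite{mypaper} to identify the weak and a.e.\ limits, whereas you prove that same fact directly via Fatou and Egorov, which also makes the boundedness of $T_\pm$ explicit.
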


\begin{proof}
Let 
$\displaystyle
T_+\ne \textup{w.o.t.-}\lim_{r\to 1-} T_r
$. 
Then for some $f\in L^2(\mu)$ we can find a sequence $r_k\nearrow 1$ such that $T_{r_k} f$ does not converge to $T_+f$ weakly in $L^2(v)$  (the family $T_r f$ is bounded, the weak topology on a bounded set of a Hilbert space is determined by countably many seminorms, so it is metrizable and we can use the definition of limits in terms of sequences). 

By taking a subsequence, if necessary, we get that $T_{r_k} f$ converges weakly to some $g \in L^2(v)$. But we know that $T_{r_k} f \to T_+ f $ a.e., so (see Lemma 3.3 in \cite{mypaper}) $f=g$ and we arrived at a contradiction. 
\end{proof}

\subsection{A representation formula in the
Sz.-Nagy--%
\texorpdfstring{Foia\c{s}}{Foias} 
transcription}
\begin{theo}
\label{t:AltRepr}
The operator $\Phi_\gamma^*$ can be represented in the Sz.-Nagy--Foia\c{s} transcription as
\begin{align}
\label{AltRepr}
(1-|\gamma|^2)^{1/2}\Phi_\gamma^* f & = 
\left( \begin{array}{c} 0 \\  (\overline\gamma - (\overline\gamma - 1) T_+ \1 )\Delta_\gamma \end{array}
\right) f 
+
\left( \begin{array}{c} (1 + \overline\gamma \theta_\gamma) / T_+\1 \\  (\overline\gamma - 1) \Delta_\gamma \end{array}
\right) T_+ f 
\\
\notag
 & = 
\left( \begin{array}{c} 0 \\   \frac{1-\overline\gamma\theta_0}{|1-\overline\gamma\theta_0|}   T_+ \1 \cdot\Delta_0 \end{array}
\right) f 
+
\left( \begin{array}{c} \frac{1-|\gamma|^2}{1 - \overline\gamma \theta_0} \cdot\frac1{ T_+\1} \\  (\overline\gamma - 1) \frac{(1-|\gamma|^2)^{1/2}}{|1-\overline\gamma\theta_0|} \Delta_0 \end{array}
\right) T_+ f 
\end{align}
for $f\in L^2(\mu)$.
\end{theo}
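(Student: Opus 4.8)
The plan is to specialize the ``universal'' formula \eqref{f-reprB} of Theorem~\ref{t-reprB} (with $A_\gamma$, $B_\gamma$ given in the Sz.-Nagy--Foia\c{s} transcription by \eqref{A(z)} and \eqref{B(z)}) to the present transcription, rewriting the regularized Cauchy integral $\int\frac{f(\xi)-f(z)}{1-\overline\xi z}\,d\mu(\xi)$ through the boundary value $T_+$. First I would fix $f\in C^1(\T)$ and, for a.e.\ $z\in\T$, write this integral as $\lim_{r\to1^-}\int\frac{f(\xi)-f(z)}{1-r\overline\xi z}\,d\mu(\xi)=\lim_{r\to1^-}\bigl((Rf\mu)(rz)-f(z)(R\mu)(rz)\bigr)=T_+f(z)-f(z)\,T_+\1(z)$. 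The passage $r\to1$ on the left is justified by dominated convergence: using $|f(\xi)-f(z)|\le\frac\pi2\|f'\|_\infty|\xi-z|$ and the elementary identity $|1-r\overline\xi z|^2=(1-r)^2+r|\xi-z|^2\ge r|\xi-z|^2$, the integrand is bounded by a constant multiple of $\|f'\|_\infty$, uniformly in $r\in[\tfrac12,1)$. Substituting into \eqref{f-reprB} yields
\[
\Phi_\gamma^* f=(A_\gamma-B_\gamma\,T_+\1)\,f+B_\gamma\,T_+f,\qquad f\in C^1(\T).
\]

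The key algebraic input is the identity $T_+\1=(1-\theta_0)^{-1}$ (equivalently $R\mu(\la)=(1-\theta_0(\la))^{-1}$ on $\D$), which is immediate from Lemma~\ref{l:CharFunct-U_0} since $\theta_0=R_1\mu/(1+R_1\mu)$ and $R\mu=1+R_1\mu$. Granting this, identifying the two coefficients with those in \eqref{AltRepr} is routine algebra. The coefficient of $T_+f$ is $B_\gamma$; using \eqref{B(z)} together with $1+\overline\gamma\theta_\gamma=(1-|\gamma|^2)/(1-\overline\gamma\theta_0)$ — a one-line consequence of $\theta_\gamma=(\theta_0-\gamma)/(1-\overline\gamma\theta_0)$ from Remark~\ref{l:theta_0-theta_gamma} — one checks that the top entry of $(1-|\gamma|^2)^{1/2}B_\gamma$ equals $(1+\overline\gamma\theta_\gamma)/T_+\1$, giving the first form; substituting the Remark~\ref{l:theta_0-theta_gamma} expressions for $\theta_\gamma$ and $\Delta_\gamma$ then produces the second form. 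For the coefficient of $f$ the point is that its top entry vanishes: the identity for $T_+\1$ gives $(1+(\overline\gamma-1)\theta_\gamma-\gamma)(1-\theta_0)^{-1}=1+\overline\gamma\theta_\gamma$, so the top entries of $A_\gamma$ and $B_\gamma T_+\1$ cancel, while the bottom entry simplifies, via $\overline\gamma-(\overline\gamma-1)T_+\1=(1-\overline\gamma\theta_0)/(1-\theta_0)$, to the bottom entries displayed in \eqref{AltRepr}.

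It then remains to pass from $C^1(\T)$ to all of $L^2(\mu)$ by density. Both operators on the right of the displayed identity are bounded $L^2(\mu)\to L^2(\C^2)$: the map $f\mapsto B_\gamma\,T_+f$ because $\|B_\gamma\,T_+f\|_{L^2(\C^2)}=\|T_+f\|_{L^2(v)}$ with $v=|B_\gamma|^2$, and $T_+\colon L^2(\mu)\to L^2(v)$ is bounded by Proposition~\ref{p:wot}; and the map $f\mapsto(A_\gamma-B_\gamma T_+\1)f$ because the squared modulus of its (scalar) coefficient equals $\Delta_0^2/|1-\theta_0|^2=\re R_2\mu\big|_\T=d\mu_{\mathrm{ac}}/dm$ — the standard Fatou-type formula for radial limits of Poisson integrals — so that $\|(A_\gamma-B_\gamma T_+\1)f\|_{L^2(\C^2)}^2=\int|f|^2\,d\mu_{\mathrm{ac}}\le\|f\|_{L^2(\mu)}^2$. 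Since $C^1(\T)$ is dense in $L^2(\mu)$ and both sides of \eqref{AltRepr} are bounded operators $L^2(\mu)\to L^2(\C^2)$ agreeing on $C^1(\T)$, they agree on all of $L^2(\mu)$.

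The only genuinely non-cosmetic steps are the two analytic ones: justifying the a.e.\ identity $\int\frac{f(\xi)-f(z)}{1-\overline\xi z}\,d\mu(\xi)=T_+f(z)-f(z)T_+\1(z)$ (the dominated-convergence argument above, which also needs $1-\theta_0\not\equiv0$ so that $T_+\1$ is finite a.e.), and verifying that $f\mapsto(A_\gamma-B_\gamma T_+\1)f$ really is bounded from $L^2(\mu)$ into $L^2(\C^2)$, i.e.\ that $\Delta_0^2/|1-\theta_0|^2\,dm$ is dominated by $\mu$. Everything else reduces by straightforward algebra to the single identity $T_+\1=(1-\theta_0)^{-1}$ together with the elementary relations of Remark~\ref{l:theta_0-theta_gamma}.
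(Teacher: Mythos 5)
Your proof is correct and follows the same overall strategy as the paper: specialize the universal formula \eqref{f-reprB} with $A_\gamma,B_\gamma$ from \eqref{A(z)}, \eqref{B(z)}, identify the regularized integral with $T_+f-f\,T_+\1$, use $T_+\1=(1-\theta_0)^{-1}$, and finish with the same algebra and a density argument. The genuine difference is in how the key identity $\int\frac{f(\xi)-f(z)}{1-\overline\xi z}\,d\mu(\xi)=T_+f(z)-f(z)\,T_+\1(z)$ for $f\in C^1(\T)$ is justified: the paper combines the uniform convergence of $(T_rf)-f\,(T_r\1)$ with the weak-operator-topology convergence $T_r\to T_+$ of Proposition \ref{p:wot}, whereas you argue pointwise a.e.\ (Lebesgue) by dominated convergence, using $|1-r\overline\xi z|^2=(1-r)^2+r|\xi-z|^2$ together with the classical a.e.\ existence of nontangential boundary values of Cauchy transforms; this is more elementary and avoids the w.o.t.\ limit at this stage, although you still invoke Proposition \ref{p:wot} (boundedness of $T_+:L^2(\mu)\to L^2(v)$, $v=|B_\gamma|^2$) in the extension step, exactly as the paper does. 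Your extension argument is in fact more explicit than the paper's: the paper passes from $C^1(\T)$ to $L^2(\mu)$ citing only the boundedness of $f\mapsto B_\gamma T_+f$, while you also verify directly that the zero-order term is bounded, by computing that the $\C^2$-norm squared of $A_\gamma-B_\gamma T_+\1$ equals $\Delta_0^2/|1-\theta_0|^2=w$ (the paper's identity \eqref{Delta--w}), so multiplication by it is a contraction from $L^2(\mu)$ into $L^2(\C^2)$; this fills in a step the paper leaves implicit. The algebraic identifications you describe (cancellation of the top entries via $1+(\overline\gamma-1)\theta_\gamma-\gamma=(1+\overline\gamma\theta_\gamma)(1-\theta_0)$, and the simplification of the bottom entries) are exactly the paper's computation of $g_1$ and $g_2$, so no issues there.
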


\begin{rem}
\label{T_+1-theta_0}
It is not hard to see and will be shown in the proof of Theorem \ref{t:AltRepr} that the function $1/T_+\1$ is given by the boundary values of the function $1-\theta_0(z) = 1/ R\mu(z)$, $z\in\D$.  The function $T_+f$ is given by the boundary values of the function $Rf\mu(z)$, $z\in\D$. 

The function $R\mu(z)$ belongs to the Nevanlinna class, and so does the function 
\begin{align}
\label{UpFloor}
 (1+\overline \gamma\theta_\gamma(z)) \frac{Rf\mu(z)}{R\mu(z)} =  (1+\overline \gamma\theta_\gamma(z)) (1-\theta_0(z)) Rf\mu(z), \qquad z\in \D. 
\end{align}
The (non-tangential) boundary values of the function \eqref{UpFloor} give us the top entry in the right side of \eqref{AltRepr}, which as we know belongs to $H^2$ (treated as a subspace of $L^2(\T)$). The uniqueness theorem for functions of Nevanlinna class (such functions are determined by their boundary values) implies that the harmonic extension of this $H^2$ function (which is analytic in $\D$) is given by \eqref{UpFloor}. 

Thus, the function from \eqref{UpFloor} belongs to $H^2$. Since the reciprocal of the function $1 + \overline\gamma \theta_\gamma$ is in $H^\infty$, we can conclude that $Rf\mu/R\mu\bigm|_{\textstyle \D} \in H^2$.
\end{rem}

\begin{rem}
If the measure $\mu$ is purely singular (equivalently, $\theta$ is an inner function), the formula for $\Phi^*$ was obtained (modulo the existence of boundary values $\mu$-a.e.) by D.~Clark \cite{Clark}. 
Note that in \cite{Clark} the measure $\mu$ was not supposed to be a probability measure if $\gamma\ne 0$, so some computations are  required to see that the formulas in \cite{Clark} for $\gamma\ne0$ give the same result us in this paper. 
\end{rem}

\begin{proof}[Proof of Theorem \ref{t:AltRepr}]
First, let us notice that for $f\in C^1$
\begin{align}
\label{T_pm-01}
(T_r f) (z) - f (z) (T_r 1)(z) = \int_\T \frac{f(\xi) - f(z) }{1-r\overline\xi z} d\mu(\xi) 
\quad \rightrightarrows \quad\int_\T \frac{f(\xi) - f(z) }{1- \overline\xi z} d\mu(\xi) 
\end{align}
uniformly on $\T$ as $r\to 1$. 

On the other hand, by Proposition \ref{p:wot}, $T_r f \to T_+ f$ weakly in $L^2(v)$ as $r\to 1-$, so taking the weak limit as $r\to1-$ in the left hand side of \eqref{T_pm-01} we get that for $f\in C^1$
\[
(T_+ f) (z) - f (z) (T_+ 1)(z)  = \int_\T \frac{f(\xi) - f(z) }{1- \overline\xi z} d\mu(\xi), \qquad z\in\T  . 
\]
Then for all $f\in L^2(\mu)$
\[
\Phi_\gamma^* f (z) = A_\gamma(z) f(z) + B_\gamma(z) \bigl( (T_+f)(z) - f(z) (T_+1)(z) \bigr). 
\]
Note, that while we initially proved this formula only for $f\in C^1(\T)$, we can extend it by continuity from the dense set $C^1(\T)$ using the fact that the operator $f\mapsto B_\gamma T_+f$ is bounded.

The remainder of the proof is given by rather straightforward calculations. 

Namely, denoting 
\[
(1-|\gamma|^2)^{1/2} \Phi_\gamma^* f =: 
\left( \begin{array}{c} g_1 \\ g_2\end{array}\right) 
\]
and recalling the formulas \eqref{A(z)} and \eqref{B(z)} for $A_\gamma$ and $B_\gamma$ we get 
\begin{align*}
g_2 & = \overline\gamma \Delta  f   + (\overline\gamma -1) \Delta \cdot (T_+f - f T_+1 ) \\
& = \bigl( \overline\gamma - (\overline\gamma-1) T_+1 \bigr) f \Delta + (\overline\gamma-1) \Delta T_+f, 
\end{align*}
which gives us the lower entry in the right side of \eqref{AltRepr}. 

The computation of $g_1$ is just a bit more complicated. 
Namely, we can write recalling the definition of $A$ and $B$
\begin{align}
\label{g1-01}
g_1 = (1-\overline\gamma\theta_\gamma) f + \bigl( 1+(\overline\gamma -1)\theta_\gamma - \gamma) (T_+f - f T_+1)  .
\end{align}
Using \eqref{theta_0-theta_gamma} we can write
\begin{align*}
1 + (\overline\gamma -1) \theta_\gamma - \gamma = 1+ \overline \gamma \theta_\gamma - \theta_\gamma - \gamma
&= ( 1+ \overline \gamma \theta_\gamma ) \left( 1 - \frac{\theta_\gamma + \gamma}{ 1+ \overline \gamma \theta_\gamma } \right) 
= ( 1+ \overline \gamma \theta_\gamma ) (1-\theta_0). 
\end{align*}
Now recall that in $\D$
\[
\theta_0 = \frac{R_1\mu}{1+R_1\mu} = \frac{R_1\mu}{R\mu}, 
\]
so
\begin{align}
\label{1-theta_0}
1-\theta_0 = \frac{1}{1+R_1\mu} = \frac{1}{R\mu}. 
\end{align}
Taking the non-tangential boundary values we get that on the unit circle $\T$
\begin{align}
\label{1-theta_0-01}
1-\theta_0 = 1/T_+\1. 
\end{align}
Substituting all this in \eqref{g1-01} we get 
\begin{align*}
g_1 & = (1-\overline\theta_\gamma) f - (1+\overline\gamma\theta_\gamma)(1-\theta_0) (T_+\1) f +
(1+\overline\gamma\theta_\gamma)(1-\theta_0) T_+f 
\\
& = (1-\overline\theta_\gamma) f  - (1-\overline\theta_\gamma) f  + (1+\overline\gamma\theta_\gamma)(T_+\1)^{-1} T_+f 
\\
&= (1+\overline\gamma\theta_\gamma)(T_+\1)^{-1} T_+f , 
\end{align*}
which gives us the first line of \eqref{AltRepr}. Using the identity $\theta_\gamma =(\theta_0-\gamma)/(1-\overline\gamma\theta_0)$ we get that
\[
1+\overline\gamma\theta_\gamma = \frac{1-|\gamma|^2}{1 - \overline\gamma \theta_0}, 
\]
which gives us the second line in \eqref{AltRepr}.
\end{proof}

\section{A representation for 
\texorpdfstring{$\Phi_\gamma^*$}{Phi<sub>gamma<sup>*} 
in the de Branges--Rovnyak transcription and a formula for 
\texorpdfstring{$\Phi_\gamma$}{Phi<sub>gamma}
}
\label{s:Phi^*_gamma-deBranges}

\subsection{
Preliminaries about the de Branges--Rovnyak transcription}


It is easy to see from the definition that in the Sz.-Nagy--Foia\c{s} transcription a function 
\[
g=\left(\begin{array}{c} g_1\\ g_2\end{array}\right) \in \left(\begin{array}{c} H^2 \\ \clos\Delta L^2 \end{array} \right)
\] 
is in $\cK_\theta$ if and only if 
\begin{align}
\label{deBrangesRepr-01}
g_- := \overline \theta g_1 + \Delta g_2 \in H^2_- := L^2 (\T) \ominus H^2 .
\end{align}
Note, that knowing $g_1$ and $g_-$ one can restore $g_2$ on $\T$:
\[
g_2\Delta = g_-  -   g_1\overline \theta. 
\]

The   equality \eqref{deBrangesRepr-01} means that the pair $g_+ = g_1$ and $g_-$ belongs to the de Branges--Rovnyak space, see \eqref{deBrangesRepr}. 
It is also not hard to check that the norm of the pair $(g_1, g_-)$ in the Branges--Rovnyak space (i.e.~in the weighted space $L^2(W)$, 
$W=W_\theta^{[-1]}$, see Section~\ref{s:deBrangesRepr}) coincides with the norm of the pair $(g_1, g_2)$ in the Sz.-Nagy--Foia\c{s} 
space (i.e.~in non-weighted $L^2$). Indeed, we have
\begin{align*}
\left(\begin{array}{c} g_1 \\ g_- \end{array}\right) = 
\left(\begin{array}{cc} 1 & 0 \\ \overline\theta & \Delta \end{array}\right)
\left(\begin{array}{c} g_1 \\ g_2 \end{array}\right)\,.
\end{align*}
Let $B$ be a ``Borel support'' of $\Delta$, i.e.~the set where one of the representative from the equivalence class of $\Delta$ is different from $0$. 
A direct computation shows that for 
\[
W_\theta = \left(\begin{array}{cc} 1 & \theta \\ \overline\theta & 1 \end{array}\right)
\]
we have a.e.~on $\T$ 
\begin{align*}
\left(\begin{array}{cc} 1 & \theta \\ 0 & \Delta \end{array}\right)
W_\theta^{[-1]}\left(\begin{array}{cc} 1 & 0 \\ \overline\theta & \Delta \end{array}\right)
=
\left(\begin{array}{cc} 1 & 0 \\ 0 & \1\ci B \end{array}\right) ,  
\end{align*}
which gives the desired equality of the norms.

Note that functions in $H^2_-$ admit analytic continuation to the exterior of the unit disc, so a function in $\cK_\theta$ is determined by the boundary values of two functions $g_1$ and $g_-$ analytic in $\D$ and $\ext \D$ respectively.

\begin{rem}
\label{r:dBR_model}

When $\theta$ is an extreme point of the unit ball in $H^\infty$ the de Branges--Rovnyak transcription reduces to studying only the function $g_1$ at the expense of having the more complicated range norm. Let us recall that a function $\theta\in H^\infty$, $\|\theta\|_\infty\le1$ is an extreme point of the unit ball in $H^\infty$ if and only if 
\[
\int_\T \ln (1-|\theta|^2) |dz| =-\infty. 
\]
Since (see Remark \ref{l:theta_0-theta_gamma}) we have that  $\Delta_\gamma=(1-|\theta_\gamma|^2)^{1/2}$ satisfy 
\[
\Delta_\gamma = \frac{(1- |\gamma|^2)^{1/2}}{|1-\overline\gamma\theta_0|} \Delta_0,
\]
a function $\theta_\gamma$, $\gamma\ne0$ is an extreme point if and only if $\theta_0$ is. 

It is not hard to compute using Lemma \ref{l:CharFunct-U_0} that 
\begin{align}
\label{Delta--w}
1-|\theta_0|^2 =: \Delta_0^2 = |1-\theta_0|^2 w, 
\end{align}
where $w$ is the density of the absolutely continuous part of $\mu$. Since $1-\theta_0\in H^\infty$ we have that $\int_\T |1-\theta_0|^2 |dz| $ is finite, so $\theta_0$ (and thus all $\theta_\gamma$, $|\gamma|<1$) is an extreme point if and only if $\int_\T \ln w |dz|>-\infty$. 

If $\theta$ is an extreme point in the unit ball of $H^\infty$, then the function $g_-$ is defined uniquely by the function $g_1$: indeed, if 
\begin{align*}
g_- = \overline\theta g_1 + \Delta g_2 \in H^2_- \qquad \text{and} \qquad  \wt g_- = \overline\theta g_1 + \Delta \wt g_2 \in H^2_-
\end{align*}
then
\[
g_- - \wt g_- = \Delta (g_2- \wt g_2).
\]
Assume that $g_- \ne \wt g_-$. Since $g_- - \wt g_- \in H^2_-$, we have $\int \ln_\T |g_- - \wt g_-| |dz|>-\infty$. We know that $g_2-\wt g_2\in L^2$, so $\int_\T \ln |g_2-\wt g_2| |dz| <\infty$. But $\int_\T \ln \Delta |dz|=-\infty$ (because $\theta$ is an extreme point), so 
\[
-\infty< \int_\T \ln |g_-  - \wt g_- | \,|dz| = \int_\T \ln \Delta \,|dz| + \int_\T \ln |g_2-\wt g_2|\,|dz| = -\infty, 
\] 
which is a contradiction. Thus $g_- = \wt g_-$ and $g_2 = \wt g_2$.

In fact even more is true: if $\theta$ is an extreme point, the map
\begin{align}
\label{i^*}
(g_1, g_2) \mapsto g_1 
\end{align}
maps unitarily the de Branges--Rovnyak space $\cD(\theta)$ onto $\cH(\theta)$, see Remark \ref{r:dBr-R_orig} for the definitions. This is true, for example, because according to  \cite[Lemma 12.1]{Nik-Vas_TwoModels_1985} $\theta\in H^\infty$, $\|\theta\|_\infty\le1$ the map \eqref{i^*} is a coisometry (adjoint of an isometry). And, as we discussed just before, if $\theta$ is an extreme point, the map \eqref{i^*} has trivial kernel. 

Note that the transformation \eqref{i^*} intertwines the backward shifts, so it gives us a canonical isomorphism between $\cD(\theta)$ and $\cH(\theta)$.

Using the described in Remark \ref{r:dBr-R_orig} connection between the space $\cD(\theta)$ and $\cK_\theta$ in the de~Branges--Rovnyak transcription, we can see that the above map maps unitarily the model space $\cK_\theta$ (in both de Branges--Rovnyak and Sz.-Nagy--Foia\c{s} transcriptions) onto $\cH(\theta)$.


\end{rem}

\subsection{Computing \texorpdfstring{$g_-$}{g<sub>-} for  the de Branges--Rovnyak transcription}
As it was shown above in Section \ref{s:AltRepr} for the function $g= \Phi_\gamma^* f \in \cK_{\theta_\gamma}$ the entry $g_1$ is determined in the unit disc by the \emph{normalized Cauchy transform} $Rf\mu/R\mu$, 
\begin{align}
\label{NCT-01}
g_1(z) = (1-|\gamma|^2)^{-1/2} (1+\overline\gamma \theta_\gamma)\, \frac{Rf\mu}{R\mu} 
& = \frac{(1-|\gamma|^2)^{1/2}}{1-\overline{\gamma}\theta_0}\, \frac{Rf\mu}{R\mu} 
\\
\notag
& =  \frac{(1-|\gamma|^2)^{1/2}(1-\theta_0)}{1-\overline{\gamma}\theta_0}\, Rf\mu \,. 
\end{align}
In order to complete the representation of $\Phi^*_\gamma$ in the de Branges--Rovnyak transcription, it remains to find a similar representation for $g_-^\gamma$.

\begin{theo}
\label{t:g_-}
Let $\mu$ be not the Lebesgue measure. 
Then the function $g_-=g^\gamma_-$ is given by 
\begin{align}
\label{g_-}
g^\gamma_- = (1-|\gamma|^2)^{-1/2} \left( \overline \theta_\gamma +\overline\gamma \right) \frac{T_-f}{T_-\1} 
& = \frac{(1-|\gamma|^2)^{1/2} \overline\theta_0 }{1-\gamma \overline\theta_0}  
\cdot 
\frac{T_-f}{T_-\1}  \\
\notag
& =  \frac{(1-|\gamma|^2)^{1/2} (1-\overline\theta_0) }{1-\gamma \overline\theta_0}    T_-f    \,.
\end{align}
\end{theo}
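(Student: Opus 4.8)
The plan is to mirror the computation that produced the formula \eqref{NCT-01} for $g_1$, but now working with the lower entry of the model space. Recall that a function $g = \left(\begin{array}{c} g_1 \\ g_2 \end{array}\right)\in\cK_{\theta_\gamma}$ in the Sz.-Nagy--Foia\c{s} transcription carries the datum $g_- = \overline{\theta_\gamma} g_1 + \Delta_\gamma g_2 \in H^2_-$, which is precisely the second entry in the de Branges--Rovnyak transcription. So I would start from the representation of $\Phi_\gamma^* f$ in the Sz.-Nagy--Foia\c{s} transcription given by Theorem \ref{t:AltRepr}, read off $g_1$ and $g_2$ there, and form the combination $\overline{\theta_\gamma}g_1 + \Delta_\gamma g_2$. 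Using the expressions for $g_1$ and $g_2$ from \eqref{AltRepr} one gets
\[
(1-|\gamma|^2)^{1/2} g_-^\gamma = \overline{\theta_\gamma}\,\frac{1+\overline\gamma\theta_\gamma}{T_+\1}\, T_+f + \Delta_\gamma\bigl(\overline\gamma - (\overline\gamma-1)T_+\1\bigr) f \Delta_\gamma + \Delta_\gamma^2 (\overline\gamma-1) T_+ f,
\]
and then simplify using $\Delta_\gamma^2 = 1-|\theta_\gamma|^2$ on $\T$ together with $\theta_\gamma(0)=-\gamma$. The algebraic identity $\overline{\theta_\gamma}(1+\overline\gamma\theta_\gamma) + (\overline\gamma-1)(1-|\theta_\gamma|^2) = \overline\theta_\gamma + \overline\gamma$ (easily verified on $\T$) should collapse the $T_+f$ terms, and the $f$-terms should cancel by the same mechanism that killed the diagonal term in the proof of Theorem \ref{t:AltRepr}. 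This would give $g_-^\gamma$ in terms of $T_+f$ and $T_+\1$.

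The subtlety — and the step I expect to be the main obstacle — is that the claimed formula \eqref{g_-} is stated in terms of the \emph{exterior} boundary values $T_-f$ and $T_-\1$, not the interior ones. So after obtaining an expression involving $T_+f$ and $T_+\1$ I would need to reconcile it with the $T_-$ version. The natural route is to observe that $g_-^\gamma$ lies in $H^2_-$, hence extends analytically to $\ext\D$, and that its extension is governed by a Nevanlinna-class function; one then identifies this extension directly. Concretely, I would use the jump-type relations between $R f\mu$ inside and outside $\T$: the difference $(T_+ - T_-)f$ is (up to a normalization and a factor of the density) essentially $f\,d\mu/dm$ where the absolutely continuous part lives, while on the singular part $T_+f = T_-f$ a.e.; more usefully, there is the classical identity relating $R_1\mu$, $R\mu$ inside and outside, together with $\theta_0 = R_1\mu/R\mu$ in $\D$ and the analogous representation of $\overline{\theta_0}$ as boundary values of a function analytic in $\ext\D$. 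Since $1-\theta_0 = 1/R\mu$ in $\D$ by \eqref{1-theta_0}, the corresponding exterior statement is $1-\overline\theta_0 = 1/T_-\1$ on $\T$ (this is the exterior analogue of \eqref{1-theta_0-01}), which is exactly the bridge needed to pass between the two forms of the answer.

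So the concrete sequence of steps is: (i) substitute \eqref{AltRepr} into $g_- = \overline{\theta_\gamma} g_1 + \Delta_\gamma g_2$ and simplify to get a preliminary formula for $g_-^\gamma$; (ii) use $\theta_\gamma = (\theta_0-\gamma)/(1-\overline\gamma\theta_0)$ and $\Delta_\gamma = (1-|\gamma|^2)^{1/2}\Delta_0/|1-\overline\gamma\theta_0|$ from Remark \ref{l:theta_0-theta_gamma} to rewrite everything in terms of $\theta_0$; (iii) establish the exterior identity $1-\overline\theta_0 = 1/T_-\1$ on $\T$ by taking non-tangential boundary values from $\ext\D$ of the function $1-\overline\theta_0$, analogously to how \eqref{1-theta_0-01} was obtained, using the representation of $\overline\theta_0$ via the exterior Cauchy transform; (iv) verify the relation $T_+ f/T_+\1 = \overline\theta_0\, T_-f/T_-\1$ (equivalently $g_1$ and $g_-$ are linked through $g_- = \overline{\theta_\gamma} g_1 + \Delta_\gamma g_2$ and the normalized-Cauchy-transform structure), which is the exterior counterpart of \eqref{NCT-01}; and (v) combine (i)--(iv) to land on the three equal expressions in \eqref{g_-}, the last one using $\overline\theta_0/(1-\gamma\overline\theta_0) = (1-\overline\theta_0)/(1-\gamma\overline\theta_0) - \text{(holomorphic correction)}$ — more precisely, the same kind of partial-fraction manipulation $\overline\theta_0 + (1-\gamma\overline\theta_0)\cdot 0$ trick that produced the last line of \eqref{NCT-01}, namely writing $\overline\theta_0 = 1 - (1-\overline\theta_0)$ and absorbing the constant. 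The hypothesis that $\mu$ is not the Lebesgue measure guarantees $T_-\1\not\equiv 0$ so that the division is legitimate, paralleling the role of this hypothesis in Proposition \ref{p:c-agree}.
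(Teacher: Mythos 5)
Your overall strategy -- recover $g_-$ as the combination $\overline\theta_\gamma g_1+\Delta_\gamma g_2$ from a representation of $\Phi_\gamma^*f$ -- is reasonable, but the concrete identities your plan rests on are false, and this is not a matter of routine patching. First, the ``easily verified'' identity $\overline\theta_\gamma(1+\overline\gamma\theta_\gamma)+(\overline\gamma-1)(1-|\theta_\gamma|^2)=\overline\theta_\gamma+\overline\gamma$ is wrong: the left side equals $\overline\theta_\gamma+\overline\gamma-1+|\theta_\gamma|^2$, so it holds only where $|\theta_\gamma|=1$, i.e.\ in the inner case, which is precisely not the general case treated here. Relatedly, the $f$-terms do \emph{not} cancel in the $T_+$-based expression: the lower entry of \eqref{AltRepr} contains a genuine multiplication term, and it survives in $\overline\theta_\gamma g_1+\Delta_\gamma g_2$ unless $\Delta_\gamma=0$ a.e. Second, your step (iii) is incorrect: since $\mu(\T)=1$ one has $T_-\1=1-\overline{T_+\1}=-\overline\theta_0/(1-\overline\theta_0)$ (this is \eqref{T_-1}), so $1/T_-\1=(\overline\theta_0-1)/\overline\theta_0$, not $1-\overline\theta_0$. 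Third, your step (iv), $T_+f/T_+\1=\overline\theta_0\,T_-f/T_-\1$, fails already for $f=\1$ (the left side is $\equiv 1$, the right side is $\overline\theta_0$); the interior and exterior normalized Cauchy transforms are genuinely different functions -- they produce $g_1$ and (essentially) $g_-$ respectively -- and are not related by multiplication by $\overline\theta_0$.

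The idea you are missing is that the singular integral in the universal formula \eqref{f-reprB} can just as well be regularized from \emph{outside} the disc: taking the weak limit as $r\to1^+$ (Proposition \ref{p:wot}) gives $\int_\T\frac{f(\xi)-f(z)}{1-\overline\xi z}\,d\mu(\xi)=T_-f(z)-f(z)\,T_-\1(z)$ on $\T$, so $\Phi_\gamma^*f=A_\gamma f+B_\gamma\,(T_-f-f\,T_-\1)$ with the same $A_\gamma,B_\gamma$ as before. This is the paper's route: form $g_-=\overline\theta_\gamma g_1+\Delta_\gamma g_2$ from this $T_-$-representation and verify by direct computation -- using \eqref{T_-1} and $\theta_\gamma=(\theta_0-\gamma)/(1-\overline\gamma\theta_0)$ -- that the coefficient of $f$ vanishes (identity \eqref{brac-01}); the formula \eqref{g_-} then drops out. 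Alternatively, your route through \eqref{AltRepr} could in principle be repaired, but then the passage from $T_+$ to $T_-$ must go through the jump relation $T_+h-T_-h=wh$ of Lemma \ref{T_+-T_-} together with $\Delta_0^2=|1-\theta_0|^2w$ from \eqref{Delta--w}; it is this pair of facts, not the identities in your steps (iii)--(iv), that makes the multiplication terms cancel and produces the factor $\overline\theta_0/(1-\gamma\overline\theta_0)$ in \eqref{g_-}. As written, your argument does not go through.
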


\begin{rem}
\label{r:g_-Alt}
Defining the analytic outside of the disc function $\theta_\gamma^\sharp$ by $\theta^\sharp_\gamma(z) = \overline{\theta_\gamma(1/\overline z)}$, $|z|>1$ we can see that the values of $g_-^\gamma$ on $\T$ are the non-tangential boundary values analytic on $\{z\in\C:|z|>1\}$ function 
\[
(1-|\gamma|^2)^{-1/2} \left( \theta^\sharp_\gamma(z)  + \overline\gamma \right) \frac{(Rf\mu)(z)}{(R\mu)(z)}\,, \qquad |z|>1. 
\]
\end{rem}

\begin{proof}[Proof of Theorem \ref{t:g_-}]
Acting as in the proof of Theorem \ref{t:AltRepr}  but now taking the weak limit as $r\to 1+$ in the left hand side of \eqref{T_pm-01}, we get that  for $f\in C^1(\T)$ 
\[
(T_- f) (z) - f (z) (T_- 1)(z)  = \int_\T \frac{f(\xi) - f(z) }{1- \overline\xi z} d\mu(\xi), \qquad z\in\T  . 
\]
This gives us the following representation for $\Phi_\gamma^* f$, $f\in L^2$, 
\[
\Phi_\gamma^* f (z) = A_\gamma(z) f(z) + B_\gamma(z) \bigl( (T_-f)(z) - f(z) (T_-1)(z) \bigr). 
\]
Again, we proved this formula for $f\in C^1(\T)$, but we can extend it from the dense set to all $L^2(\mu)$ by using the fact that the operators $T_-:L^2(\mu)\to L^2(v)$ and $\Phi_\gamma^* L^2(\mu) \to \cK_\theta$ are bounded. 

To simplify the formulas denote $\wt g_- := (1-|\gamma|^2) g_-$. Recalling the formulas \eqref{A(z)}, \eqref{B(z)} for $A_\gamma$ and $B_\gamma$ we can then write
\begin{align*}
\wt g_- & = (\overline\theta_\gamma +\overline\gamma |\theta_\gamma|^2) f + \overline\gamma \Delta_\gamma^2 f  
+
(\overline\theta_\gamma + (\overline\gamma -1) |\theta_\gamma|^2 - \gamma \overline\theta_\gamma)
(T_-f - fT_-\1) 
\\
& 
\qquad \qquad \qquad \qquad\qquad\qquad\qquad\qquad
+ (\overline\gamma-1) \Delta_\gamma^2 (T_-f - fT_-\1)  \\
& = 
(\overline\theta_\gamma + \overline\gamma) f 
+
\bigl( (1-\gamma)\overline\theta_\gamma + \overline\gamma -1 \bigr) (T_- f - f T_-\1) 
\\ 
&= \Bigl( \overline\theta_\gamma + \overline\gamma - \left[ (1-\gamma)\overline\theta_\gamma + \overline\gamma -1 \right] T_-\1 \Bigr) f +  
\left[ (1-\gamma)\overline\theta_\gamma + \overline\gamma -1 \right] T_- f . 
\end{align*}
Since $\wt g_-\in H^2_-$ we can conclude that the first term in the last line above always equals $0$, so
\begin{align}
\label{brac-01}
(\overline\theta_\gamma + \overline\gamma)/T_-\1 
= 
(1-\gamma)\overline\theta_\gamma + \overline\gamma - 1. 
\end{align}
This conclusion, of course, requires some reasoning, which we will not present here. Instead, we will prove \eqref{brac-01} using direct calculations. 

Assuming for a moment that \eqref{brac-01} is proved, we can immediately see from the calculations above that 
\[
\wt g_- = \Bigl(\overline\theta_\gamma + \overline\gamma\Bigr)\frac{T_-f}{T_-\1}, 
\]
which proves the first equality in the theorem. The second equality is easily obtained using the fact that $\theta_\gamma=(\theta_0-\gamma)/(1-\overline\gamma\theta_0)$. 

To prove \eqref{brac-01}  notice that for $r\in(0,1)$ the kernel of the operator $T_{1/r}$  can be written as 
\[
\frac{1}{1-r^{-1} \overline\xi z} = \frac{-r\xi\,\overline z}{1- r\xi\,\overline z} = 1 - 
\frac{1}{1- r\xi\,\overline z} \,.
\] 
Therefore, using the fact that $\mu(\T)=1$ and recalling (see Remark \ref{T_+1-theta_0}) that $T_+\1= 1/(1-\theta_0)$, we can see that 
\begin{align}
\label{T_-1}
T_-\1 = 1 - \overline{T_+\1} = 1 - \frac{1}{1-\overline\theta_0} = 
\frac{-\overline\theta_0}{1- \overline\theta_0} \,.
\end{align}
Taking this into account we can rewrite \eqref{brac-01} as 
\begin{align}
\label{brac-02}
(\overline\theta_\gamma + \overline\gamma) ( 1 - 1/\overline\theta_0) 
= 
(1-\gamma)\overline\theta_\gamma + \overline\gamma - 1.
\end{align}
Using the identity $\theta_\gamma = (\theta_0 -\gamma)/(1-\overline\gamma\theta_0)$ one can easily compute that both sides of \eqref{brac-02} (and of \eqref{brac-01}) equal
\[
(1-|\gamma|^2)\frac{\overline\theta_0-1}{1-\gamma\overline\theta_0}\,. 
\]
This completes the proof of \eqref{brac-01}, and so of the theorem. 
\end{proof}

\begin{rem}
\label{Sarason}
If $\theta_\gamma$ is an extreme function of the unit ball in $H^\infty$ (which is equivalent to $\int_\T \ln w \,|dz|=-\infty$) the operator $\Phi_\gamma^*$ was described by D.~Sarason \cite{SAR}. His operator acts from $L^2(\mu)$ to the de Branges space $\cH(\theta)$, but as  we said in Remark \ref{r:dBR_model} above, there is a canonical isomorphism from between $\cH(\theta)$ and the de Branges--Rovnyak model space $\cD(\theta)$. 

The formulas obtained in \cite{SAR} coincide with the formula \eqref{NCT-01} for $g_1$ obtained above. This can be seen directly if $\gamma=0$  (equivalently $\theta(0)=0$): in the case $\gamma\ne 0$ some computations are necessary, since in \cite{SAR} a different normalization of the measure $\mu$ is used: unlike this paper, the measure $\mu$ there is not supposed to be a probability measure. 

Note, that it was shown in \cite{SAR} that the formula \eqref{NCT-01} for $g_1$ defines a unitary operator from $H^2(\mu):= \spa\ci{L^2(\mu)}\{z^n: n\in \Z_+\}$ to $\cH(b)$. 

\end{rem}

\subsection{Representation of 
\texorpdfstring{$\Phi_\gamma$}{Phi<sub>gamma}
}
In this section we give a representation of the Clark operator $\Phi_\gamma$. Any function $f\in L^2(\mu) $ can be decomposed as the sum $f= f\ti s + f\ti a$ of the ``singular'' and ``absolutely continuous'' parts $f\ti s$ and  $f\ti a$. Formally, $f\ti s$ and $f\ti a$ can be defined as Radon--Nikodym derivatives $f\ti s = d(f\mu)\ti s/ d\mu\ti s$, $f\ti a = d(f\mu)\ti a/ d\mu\ti a$. 

Let $w$ denote the weight of the absolutely continuous part of $d\mu$, i.e.~$w = d\mu/dx\in L^1$.

\begin{theo}
\label{t:ReprPhi}
Let $g = \left(\begin{array}{c} g_1\\ g_2 \end{array}\right) \in \cK_{\theta_\gamma}$ (in the Sz.-Nagy--Foia\c{s} transcription) and let $f\in L^2(\mu)$, $f= \Phi_\gamma g$. Then
\begin{enumerate}
\item the non-tangential boundary values of the function 
\[
z\mapsto\frac{1-\overline\gamma}{(1-|\gamma|^2)^{1/2}} g_1(z), \qquad z\in \D
\]
exist and coincide with $f\ti s$ $\mu\ti s$-a.e.~on $\T$. 
 
\item for the ``absolutely continuous'' part $f\ti a$ of $f$ 
\[
(1-|\gamma|^2)^{1/2} w f\ti a  = \frac{1-\overline\gamma\theta_0}{1-\theta_0} g_1 + \frac{1-\gamma\overline\theta_0}{1-\overline\theta_0} g_-
\]
a.e.~on $\T$; here, recall, $g_-:= g_1\overline \theta_\gamma + \Delta_\gamma g_2\in H^2_-$.  
\end{enumerate}
\end{theo}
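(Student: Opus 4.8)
The plan is to invert the representation of $\Phi_\gamma^*$ obtained in Theorems~\ref{t:AltRepr} and~\ref{t:g_-}. Since $\Phi_\gamma$ is unitary, $f = \Phi_\gamma g$ is equivalent to $g = \Phi_\gamma^* f$, so the two displayed formulas must be read as solving for $f$ (equivalently for $f\ti s$ and $f\ti a$) in terms of the known data $g_1$ and $g_-$. First I would record, from \eqref{NCT-01} and \eqref{g_-}, the identities
\[
g_1 = \frac{(1-|\gamma|^2)^{1/2}(1-\theta_0)}{1-\overline\gamma\theta_0}\, Rf\mu \Bigm|_{\D},
\qquad
g_- = \frac{(1-|\gamma|^2)^{1/2}(1-\overline\theta_0)}{1-\gamma\overline\theta_0}\, T_- f .
\]
From the first of these, $Rf\mu\bigm|_{\D}$ is recovered as $\dfrac{1-\overline\gamma\theta_0}{(1-|\gamma|^2)^{1/2}(1-\theta_0)}\,g_1$; its non-tangential boundary values are then handled by Poltoratski's theorem on the normalized Cauchy transform (this is precisely the generalization the paper advertises, cf.~\cite{NONTAN}). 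The normalized Cauchy transform $Rf\mu/R\mu$ has non-tangential limit equal to $f$ $\mu\ti s$-a.e.; since $1/R\mu = 1-\theta_0$, we have $Rf\mu = (Rf\mu/R\mu)\cdot R\mu$, and multiplying $g_1$ by $1-\theta_0 = 1/R\mu$ on the boundary and then by the appropriate constant should reproduce $f\ti s$. Concretely, $\dfrac{1-\overline\gamma}{(1-|\gamma|^2)^{1/2}}g_1$ equals $\dfrac{1-\overline\gamma}{1-\overline\gamma\theta_0}(1-\theta_0)Rf\mu = \dfrac{1-\overline\gamma}{1-\overline\gamma\theta_0}\cdot\dfrac{Rf\mu}{R\mu}$, and since $\theta_0$ has nontangential boundary values of modulus $1$ wherever $\mu$ is singular (equivalently where $w=0$), the prefactor $\dfrac{1-\overline\gamma}{1-\overline\gamma\theta_0}$ has modulus... actually it need not have modulus one, so the cleaner route is: $\theta_0 \to$ a unimodular value at $\mu\ti s$-a.e.\ point, but more to the point $1-\theta_0 = 1/R\mu \to 0$ there only if $R\mu = \infty$; Poltoratski tells us $Rf\mu/R\mu \to f$ exactly at the points where $|R\mu| = \infty$, which is $\mu\ti s$-a.e. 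So at such points $\dfrac{1-\overline\gamma}{1-\overline\gamma\theta_0}\to \dfrac{1-\overline\gamma}{1-\overline\gamma\cdot(\text{boundary value of }\theta_0)}$, and one checks using $1-\theta_0\to 0$ (i.e.\ $\theta_0\to 1$ at those points, since $1-\theta_0 = 1/R\mu\to 0$) that the prefactor tends to $\dfrac{1-\overline\gamma}{1-\overline\gamma}=1$. Hence the limit is $f\ti s$, giving (i).

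For part (ii), the plan is to combine $g_1$ and $g_-$ on $\T$ so that the Cauchy-transform pieces $T_+f$ and $T_-f$ can be eliminated in favor of their difference, which by the Plemelj-type jump formula is $w f\ti a$. Specifically, $T_+ f - T_- f = $ the boundary jump of $Rf\mu$ across $\T$, which equals $f\ti a\, w$ up to the normalization of Lebesgue measure; this is the standard Sokhotski--Plemelj relation for the Cauchy transform of an $L^1$ density (and $T_+1 - T_-1 = w$ in particular, cf.~\eqref{1-theta_0-01} and \eqref{T_-1}). So I would solve the two boundary identities for $T_+f$ and $T_-f$:
\[
\frac{1-\overline\gamma\theta_0}{1-\theta_0}\,g_1 = (1-|\gamma|^2)^{1/2}\,T_+f,
\qquad
\frac{1-\gamma\overline\theta_0}{1-\overline\theta_0}\,g_-= (1-|\gamma|^2)^{1/2}\,T_- f
\]
on $\T$, then subtract:
\[
\frac{1-\overline\gamma\theta_0}{1-\theta_0}\,g_1 - \frac{1-\gamma\overline\theta_0}{1-\overline\theta_0}\,g_- = (1-|\gamma|^2)^{1/2}(T_+f - T_-f) = (1-|\gamma|^2)^{1/2} w\, f\ti a.
\]
Wait --- the sign in the statement is a $+$, not a $-$, between the two $g$ terms, so I must be careful: using \eqref{T_-1}, $T_-f$ relates to $\overline{T_+f}$ and the conjugation flips signs, so rewriting $g_-$ in terms of $\overline{T_+ f}$ (or equivalently tracking that $g_- \in H^2_-$ means $g_-(z) = \overline{\text{(something in }H^2)}$ on $\T$ up to a shift) will convert the difference into the stated sum; in any case the algebraic identity to verify reduces, after using $\theta_\gamma = (\theta_0-\gamma)/(1-\overline\gamma\theta_0)$ and \eqref{Delta--w} $\Delta_0^2 = |1-\theta_0|^2 w$, to a routine rational-function computation of the type already carried out repeatedly in the proofs of Theorems~\ref{t:AltRepr} and~\ref{t:g_-}.

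The main obstacle I anticipate is part (i): making the boundary-value argument rigorous requires care about \emph{where} the various non-tangential limits exist and what they equal. Poltoratski's theorem gives the non-tangential limit of $Rf\mu/R\mu$ being $f$ $\mu\ti s$-a.e., but I also need the non-tangential limit of $\theta_0$ (equivalently of $1/R\mu$) at those same points, and I need to know that the set where $|R\mu|=\infty$ (non-tangentially) carries $\mu\ti a$ no mass, so that "$\mu\ti s$-a.e." is the right statement and not "$\mu$-a.e." This is exactly the classical fact that $R\mu$ has finite non-tangential boundary values $\ell$-a.e.\ (hence $\mu\ti a$-a.e., since $\mu\ti a \ll \ell$) while $|R\mu|=\infty$ non-tangentially on a set carrying all of $\mu\ti s$; it is in \cite{NONTAN, poltsara2006} but should be cited precisely. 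The remaining piece, identifying the prefactor limit as $1$, then follows because at a $\mu\ti s$-a.e.\ point $1/R\mu \to 0$, i.e.\ $\theta_0 \to 1$, so $\dfrac{1-\overline\gamma}{1-\overline\gamma\theta_0}\to 1$. Part (ii)'s only subtlety is the precise normalization of the Plemelj jump (the factor of $2\pi$ or $1$ coming from the $|dz|/2\pi$ convention), which I would pin down by testing against $f\equiv 1$: there $T_+1 - T_-1 = \dfrac{1}{1-\overline\theta_0} - \dfrac{-\overline\theta_0}{1-\overline\theta_0} = 1$, whereas $w f\ti a|_{f=1} = w$; so in fact the correct statement must involve $T_+f - T_-f$ matching $w f\ti a$ only after one accounts for the constant term, i.e.\ $T_+(f) - T_-(f)$ with $f = f\ti s + f\ti a$ picks up $w f\ti a$ from the absolutely continuous part and $0$ (in the jump) plus the constant from $f\ti s$ --- this bookkeeping, together with the algebraic identity above, completes (ii).
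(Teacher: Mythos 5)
Part (i) of your proposal is essentially the paper's own argument: write $g_1$ in $\D$ as $\frac{(1-|\gamma|^2)^{1/2}}{1-\overline\gamma\theta_0}\,\frac{Rf\mu}{R\mu}$ (equation \eqref{NCT-01}), invoke Poltoratski's theorem for the normalized Cauchy transform, and use that $|R\mu|\to\infty$ non-tangentially $\mu\ti s$-a.e., hence $1-\theta_0=1/R\mu\to 0$ and the prefactor $\frac{1-\overline\gamma}{1-\overline\gamma\theta_0}\to 1$. That part is fine.

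Part (ii), however, is not actually completed, and the two places where you hedge are precisely where your computation goes astray. First, the sign: on $\T$ the correct identity is
\[
\frac{1-\gamma\overline\theta_0}{1-\overline\theta_0}\, g_- = -(1-|\gamma|^2)^{1/2}\, T_-f ,
\]
because $1/T_-\1=-(1-\overline\theta_0)/\overline\theta_0$ by \eqref{T_-1}, i.e.\ $g_-=-(1-|\gamma|^2)^{1/2}\frac{1-\overline\theta_0}{1-\gamma\overline\theta_0}T_-f$ (this is \eqref{e-g1}; the third displayed equality of Theorem \ref{t:g_-}, which you took at face value, drops this minus sign). With it, the sum in the statement is literally $(1-|\gamma|^2)^{1/2}(T_+f-T_-f)$ and there is no sign puzzle at all. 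Your proposed repair --- rewriting $g_-$ via $\overline{T_+f}$ because ``conjugation flips signs'' --- is not the right mechanism (the conjugation identity relates $T_-f$ to $\int_\T f\,d\mu-\overline{T_+\overline f}$, involving $\overline f$ and a constant, and does not convert your difference into the stated sum), and the ``routine rational-function computation'' you defer to is never carried out. Second, your sanity check at $f\equiv\1$ contains an algebra slip that then leads you to doubt the correct jump formula: $T_+\1=1/(1-\theta_0)$, not $1/(1-\overline\theta_0)$, so
\[
T_+\1-T_-\1=\frac{1}{1-\theta_0}+\frac{\overline\theta_0}{1-\overline\theta_0}
=\frac{1-|\theta_0|^2}{|1-\theta_0|^2}=w
\]
by \eqref{Delta--w}, in full agreement with Lemma \ref{T_+-T_-} ($T_+f-T_-f=wf$ a.e., proved via the Poisson kernel and Fatou's theorem; Fatou gives the density of the absolutely continuous part of $f\mu$, i.e.\ $wf\ti a$, so no ``constant term'' bookkeeping is needed). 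Consequently your closing paragraph, which suggests the statement of (ii) must be adjusted, is wrong; once the sign in $g_-$ and the jump lemma are stated correctly, (ii) follows in two lines exactly as in the paper.
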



To prove statement \cond2 of the theorem we need the following simple and well-known fact. 

\begin{lem}
\label{T_+-T_-}
$T_+ f - T_- f = w f$ a.e.~on $\T$ (with respect to the Lebesgue measure) for all $f\in L^2(\mu)$. 
\end{lem}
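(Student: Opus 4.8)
The plan is to prove the Sokhotski--Plemelj type jump formula $T_+f - T_-f = wf$ by first establishing it for a nice dense class and then passing to the limit. First I would recall that for $f\in L^1(\mu)$ the Cauchy transform $Rf\mu(z) = \int_\T (1-\bar\xi z)^{-1} d(f\mu)(\xi)$ is analytic both inside and outside $\D$, and $T_\pm f$ denote its non-tangential boundary values from inside and outside respectively. The classical Plemelj jump relation for the Cauchy integral on the circle states that the difference of the two boundary values equals (a constant multiple of) the density of the measure $f\,d\mu$ with respect to arclength, which is exactly $wf$ since $d\mu = w\,dx + d\mu\ti s$ and the singular part contributes nothing to the jump. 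The normalization of $R$ used here (kernel $1/(1-\bar\xi z)$) is chosen precisely so that the jump constant is $1$; I would verify this constant by a direct computation on the single test function $f\equiv\1$ and $\mu = $ normalized Lebesgue measure, where $R\1(z) = 1$ for $|z|<1$ and $R\1(z) = 0$ for $|z|>1$, giving $T_+\1 - T_-\1 = 1 = w\cdot\1$.

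For the general case I would argue as follows. For $f\in C^1(\T)$ one can split $Rf\mu = R(f - f(z))\mu \;+\; f(z)\,R\mu$ evaluated near a point $z\in\T$; the first term has a continuous (non-jumping) boundary behavior because the integrand $(f(\xi)-f(z))/(1-\bar\xi z)$ is bounded, so its inside and outside limits agree. Hence the jump of $Rf\mu$ at $z$ equals $f(z)$ times the jump of $R\mu$ at $z$, reducing everything to the case $f\equiv 1$. For $f\equiv 1$ the jump of $R\mu$ is handled by the classical theory: by \eqref{1-theta_0-01} we have $T_+\1 = 1/(1-\theta_0)$, and by \eqref{T_-1} we have $T_-\1 = -\bar\theta_0/(1-\bar\theta_0)$, so
\[
T_+\1 - T_-\1 = \frac1{1-\theta_0} + \frac{\bar\theta_0}{1-\bar\theta_0} = \frac{(1-\bar\theta_0) + \bar\theta_0(1-\theta_0)}{|1-\theta_0|^2} = \frac{1-|\theta_0|^2}{|1-\theta_0|^2},
\]
which by \eqref{Delta--w} (the identity $1-|\theta_0|^2 = |1-\theta_0|^2 w$) equals $w$ a.e.~on $\T$. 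This establishes the formula for all $f\in C^1(\T)$.

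To extend from $C^1(\T)$ to all of $L^2(\mu)$ I would invoke the boundedness results already available: by Proposition~\ref{p:wot} the operators $T_\pm : L^2(\mu)\to L^2(v)$ are bounded, where $v = |B_\gamma|^2$; since multiplication by $w$ is bounded from $L^2(\mu)$ into an appropriate space (indeed $w\,d\mu\ti a \le d\mu$ so $\|wf\|_{L^2(dx)}^2 = \int |f|^2 w^2\,dx \le \int|f|^2\,w\,dx \le \|f\|_{L^2(\mu)}^2$), all three operators $f\mapsto T_+f$, $f\mapsto T_-f$, $f\mapsto wf$ are bounded on the relevant spaces, and they agree on the dense subset $C^1(\T)\subset L^2(\mu)$, hence everywhere. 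The main obstacle is a mild bookkeeping one: making sure the target spaces match up so that the density argument is literally valid — the jump $wf$ a priori lives in $L^2(dx)$ while $T_\pm f$ are controlled in $L^2(v)$, so one should either check that the formula is an identity of $L^1(dx)$ functions (which follows since all terms are in $L^1$ and $C^1$ is dense in $L^2(\mu)$ which embeds continuously enough), or simply note that a.e.~convergence along the approximating sequence combined with the classical pointwise Plemelj formula for general finite measures already yields the result without any operator-norm considerations at all. Given that the lemma is described as ``simple and well-known,'' I expect the intended proof to be the short route via the classical Sokhotski--Plemelj theorem applied directly to the finite complex measure $f\,d\mu$.
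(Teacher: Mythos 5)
Your proposal is correct in substance, but your primary argument takes a different route from the paper, whose proof is a two--line computation: the kernel of $T_r-T_{1/r}$, $r\in(0,1)$, equals the Poisson kernel $(1-r^2)/|1-r\overline\xi z|^2$, so for a.e.\ $z\in\T$ the difference $T_+f(z)-T_-f(z)$ is the radial limit of the Poisson integral of the finite measure $f\,d\mu$, which by Fatou's theorem equals the density $wf$ of its absolutely continuous part. This applies at once to every $f\in L^2(\mu)\subset L^1(\mu)$, with no reduction to $C^1$ and no boundedness or density considerations; your closing sentence anticipates exactly this argument. Your main route --- splitting off $f(z)R\mu$ to reduce to $f\equiv\1$ and then computing the jump of $R\mu$ from \eqref{1-theta_0-01}, \eqref{T_-1} and \eqref{Delta--w} --- is legitimate and consistent with identities already established in the paper; note, however, that \eqref{Delta--w} is itself a consequence of the same Fatou-type boundary value fact, so nothing is gained in elementarity, while the paper's route proves the identity pointwise for each fixed $f\in L^2(\mu)$ and thus makes the entire extension step unnecessary.

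Two points in your write-up need repair. First, the assertion that the inside and outside limits of $R\bigl((f-f(z))\mu\bigr)$ agree ``because the integrand is bounded'' requires the (routine) dominated convergence argument in a Stolz angle, using $|f(\xi)-f(z)|\le C|\xi-z|$ and $|1-\overline\xi\lambda|\gtrsim|\xi-z|$ for $\lambda$ in a nontangential approach region; boundedness alone is not a proof. Second, the estimate $\int_\T|f|^2w^2\,dx\le\int_\T|f|^2 w\,dx$ implicitly assumes $w\le1$, which is false in general ($w$ is only an $L^1$ density), and for $f\in L^2(\mu)$ one can only guarantee $wf\in L^1(dx)$. The density argument can be salvaged --- $wf_k\to wf$ in $L^1(dx)$ by Cauchy--Schwarz, $T_\pm f_k\to T_\pm f$ in $L^2(v)$ by Proposition \ref{p:wot}, and $v>0$ a.e.\ because $\{\theta_0=1\}$ has Lebesgue measure zero, so one may pass to a.e.\ convergent subsequences --- but the cleaner fix is precisely your last suggestion: apply the jump/Fatou argument directly to the measure $f\,d\mu$, which is what the paper does.
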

\begin{proof}
Consider the operator $T_r - T_{1/r}$, $r\in (0,1)$. Its kernel is given by
\[
\frac{1}{1- r\overline \xi z} - \frac{1}{1- r^{-1}\overline \xi z} = 
\frac{1}{1- r\overline \xi z} + \frac{r\overline z\xi}{1- r\overline  z\xi}=
\frac{1-r^2}{|1- r\overline \xi z|^2} \,.
\]
Therefore for almost all $z\in\T$ 
\[
T_+f(z) - T_-f(z) = \lim_{r\to 1-} \int_\T \frac{1-r^2}{|1- r\overline \xi z|^2}  f(\xi) d\mu(\xi), 
\]
and the conclusion follows from the Fatou Theorem (cf \cite[Ch.~I, Theorem 5.3]{Garnett-BAFbook-2007}). 
\end{proof}

\begin{proof}[Proof of 
Theorem \ref{t:ReprPhi}]
Let us first prove statement \cond2. 
Recall that we have proved before, see Theorems \ref{t:AltRepr} and    \ref{t:g_-},  that
\begin{align}
\label{g_1&g_-}
g_1 = 
\frac{(1-|\gamma|^2)^{1/2}}{1 - \overline\gamma \theta_0} \cdot\frac{T_+f}{ T_+\1}\,, \qquad 
g_- 
= \frac{(1-|\gamma|^2)^{1/2} \overline\theta_0 }{1-\gamma \overline\theta_0}  
\cdot 
\frac{T_-f}{T_-\1}\,.
\end{align}
Using the identities $1/T_+\1 = 1-\theta_0$ and $1/T_-\1 = -\overline\theta_0/(1-\overline\theta_0)$ (see \eqref{1-theta_0} and \eqref{T_-1}) we get
\begin{align}
\label{e-g1}
g_1 = (1-|\gamma|^2)^{1/2}\frac{1-\theta_0}{1-\overline\gamma\theta_0}T_+ f\,, \qquad
g_- = -(1-|\gamma|^2)^{1/2}\frac{1-\overline\theta_0}{1-\gamma\overline\theta_0} T_-f\,,
\end{align}
so
\[
\frac{1-\overline\gamma\theta_0}{1-\theta_0} g_1 + 
\frac{1-\gamma\overline\theta_0}{1-\overline\theta_0} g_-
= (1-|\gamma|^2)^{1/2} (T_+ f - T_- f) = (1-|\gamma|^2)^{1/2} w f;
\]
the last equality here follows from  Lemma \ref{T_+-T_-}. 

To prove \cond1 we recall that according to Poltoratski's Theorem, see \cite[Theorem 2.7]{NONTAN}, 
the non-tangential boundary values of the normalized Cauchy Transform $Rf\mu/R\mu$ coincide  $\mu\ti s$-a.e.~with $f\ti s$. 
It is well known that $\mu\ti s$-a.e.~on $\T$ the non-tangential boundary values of $R\mu(z)$ are $\infty$, which together with the identity $1/T_+1 = 1-\theta_0$ implies that the non-tangential boundary values of $\theta_0$ equal $1$ wrt $\mu\ti{s}$-a.e.~on $\T$. Substituting the boundary values of $Rf\mu/R\mu$ and of $\theta_0$ into the first equality in \eqref{g_1&g_-} we immediately get statement \cond1. 
\end{proof}

\section{Boundedness of the normalized Cauchy transform and its generalizations}
\label{s:boundNCT}
It was proved in \cite{NONTAN} that the so called \emph{normalized Cauchy transform}, which in our notation is the operator $f\mapsto T_+f /T_+\1$ is bounded from $L^2(\mu)$ to $L^2$. In this paper we proved a slightly stronger result. 

Namely, we proved (see Proposition \ref{p:wot}) that for $v_\gamma = |B_\gamma|^2$ the operator $T_\pm:L^2(\mu)\to L^2(v_\gamma)$ is a bounded operator. 

One can see from the proof of Theorem \ref{t:AltRepr} that $B_\gamma$ is the vector coefficient for $T_+ f$ in 
\eqref{AltRepr}. So one can see from \eqref{AltRepr} that all $v_\gamma$ are equivalent in the sense of two sided estimates, so if we are not after sharp constants, we can only consider one value of $\gamma$, say $\gamma=0$; considering other $\gamma$ does not tell us anything new about boundedness of the operator $T_+$. 

Vice versa, for $\gamma=0$ we get using the identity $1/T_+\1 = 1-\theta_0$ (see Remark \ref{T_+1-theta_0})
\begin{align}
\label{v_0}
v_0 (z) = |B_0(z)|^2 = |1-\theta_0(z)|^2 + \Delta_0(z)^2 = 2\re (1-\theta_0(z)) , \qquad z\in\T  . 
\end{align}

The boundedness of the normalized Cauchy transform is equivalent to the boundedness of $T_+:L^2(\mu)\to L^2(v)$, $v=|1-\theta_0|^2$;   generally $v$ can be significantly smaller than $v_0$.  
Indeed, if $\theta_0(z)$ approaches $1$ non-tangentially as $z\to z_0$  ($z, z_0\in\T$), then $\re (1-\theta_0(z) )\asymp |1-\theta_0(z)|$. Therefore, as one can see from \eqref{v_0},  in this case $v(z) \ll v_0(z)$  ($v(z) \le C v_0(z)^2$) as $z\to z_0$. 

For such $\theta_0$ one can take a conformal mapping from the unit disc $\D$ to a Jordan domain $\Omega\subset \D$ touching the point $1$ non-tangentially; we also require that $\theta_0(0)=0$. 

From such $\theta_0$ one can construct the corresponding measure $\mu$ via the standard for the Clark theory recipie:  rewriting the formula from Lemma \ref{l:CharFunct-U_0} as
\begin{align}
\label{theta-to-mu}
R_2\mu = \frac{1+\theta_0}{1-\theta_0}
\end{align}
we can restore the measure $\mu$ as the unique measure satisfying \eqref{theta-to-mu}. This is equivalent to finding the measure $\mu$ for which the Poisson extension to the unit disc equals 
\begin{align}
\label{ReR_2}
\re \frac{1+\theta_0}{1-\theta_0} =\frac{1-|\theta_0|^2}{|1-\theta_0|^2}  . 
\end{align}
Then $R_2\mu$ is the analytic function with real part given by \eqref{ReR_2} and such that $\im R_2\mu(0) =0$, which means that $R_2\mu$ is given by \eqref{theta-to-mu}

\subsection{Exterior Cauchy transform}
Let us now investigate the \emph{exterior} normalized Cauchy transform $f\mapsto T_- f /T_-\1$. One would expect that this operator is bounded $L^2(\mu)\to L^2$, but that is not the case!  
Namely, let us consider a function $\theta_0$ such that in a small neighborhood $E$ of $i\in \T$
\begin{align}
\label{neigh-01}
|1-\theta_0|\ge 1/2 \qquad \text{and} \qquad 1/\theta_0\notin L^2 ,
\end{align}
but the measure $\mu$ does not vanish in a small neighborhood $F$ of $1$. 

This can be easily done by considering an appropriate outer function and multiplying it by $z$ to get $\theta_0(0)=0$. Since $|\theta_0| $ can be an arbitrary function such that $0\le |\theta_0|\le 1$ and $\int_\T \ln |\theta_0(z)|\,|dz|>-\infty$, we can easily construct $\theta_0$ that $1/\theta_0\notin L^2$ in a small neighborhood of the point  $i$. We can also easily get that $\Delta_0$ does not vanish in a neighborhood of $1$, which in light of \eqref{Delta--w} means that $w$ does not vanish there as well. 

Now, we know (see \eqref{T_-1}) that $1/T_-\1 =  \overline\theta_0/(\overline\theta_0-1)$, so 
\[
\frac{T_- f}{T_-\1} = \frac{\overline\theta_0-1}{\overline\theta_0} T_- f. 
\]
Take $f=\1\ci F$, where $F$ is the small neighborhood of $1$ where the measure $\mu$ does not vanish. If the neighborhoods $E$ and $F$ are sufficiently small, we get that $|T_- f|\ge \delta>0$ in $E$, which together with \eqref{neigh-01} gives us that $T_-f/T_-\1 \notin L^2$. 

But what can be said about boundedness of $T_-$? We have proved, see Proposition \ref{p:wot}, that the operator $T_-:L^2(\mu)\to L^2(v_\gamma)$ is bounded. Considering $\gamma=0$ (as we just discussed it is enough to consider only this case) we can obtain a result about boundedness of the exterior normalized Cauchy transform. Namely, this implies that the operator 
\begin{align}
\label{ExtNormCauchy}
f\mapsto \overline \theta_0 \frac{T_-f}{T_-\1}
\end{align}
is a bounded operator from $L^2(\mu)$ to $L^2$. Note that the right side of \eqref{ExtNormCauchy} is given by the boundary values of a function analytic in the exterior of the unit disc, so the ``correct'' exterior normalized Cauchy transform shoud be the operator 
\[
f\mapsto \overline{\theta_0(1/\overline z) }\frac{Rf\mu(z)}{R\mu(z)}, \qquad |z|>1. 
\]

\section{Formula for other Clark measures}
\label{s:OtherClarkMeas}
 
 In this section we consider the Clark operator $\Phi_{\alpha, \gamma} :\cK_{\theta_\gamma} \to L^2(\mu_\alpha)$, where $\mu_\alpha$, $|\alpha|=1$ is the spectral measure (corresponding to the cyclic vector $b$ of the unitary operator $U_\alpha$). The Clark operator $\Phi_{\alpha,\gamma}$ intertwines the model operator $\cM_{\theta_\gamma}$ and the operator $(U_\gamma)_\alpha$ which is the operator $U_\gamma$ in the spectral representation of the operator $U_\alpha$, 
 \[
 \Phi_{\alpha,\gamma} \cM_{\theta_\gamma}  = (U_\gamma)_\alpha \Phi_{\alpha,\gamma} \,
 \]
 (the original operator $\Phi_{\gamma}$ did the same, but with $U_\gamma$ in the spectral representation of $U_1$).

We reduce everything to the case $\alpha=1$ considered before. 
Namely, we can write 
\begin{align}
\notag
U_\gamma = U + (\gamma -1 ) b b_1^*  & = U + (\alpha -1) bb_1^* + (\gamma - \alpha) b b_1^* 
\\
\label{U_alpha_gamma}
& = U_\alpha + (\gamma/\alpha -1) b (\overline\alpha b_1)^* = U_\alpha + (\wt\gamma -1 ) b \wt b_1^*. 
\end{align}
Then we are in the situation treated above, with $U_\alpha$ instead of $U=U_1$ and with $\wt\gamma =\gamma/\alpha$ instead of $\gamma$. 
Note that $\wt b_1 =\overline\alpha b_1$ plays the role of $b_1$ here: indeed, $U_\alpha \wt b_1= b$. 

Now let us see what modifications should be done to the main formulas. 

To keep track of the changes let us introduce some notation. For $|\alpha|=1$ let $\cV_\alpha:L^2(\mu)\to L^2(\mu_\alpha)$ 
be the unitary operator intertwining the operator $U_\alpha$ (acting in $L^2(\mu)$) and its spectral representation $M_\xi$ in $L^2(\mu_\alpha)$, 
\[
\cV_\alpha U_\alpha = M_\xi \cV_\alpha. 
\]
As it is customary in the perturbation theory we assume that all the spectral measures $\mu_\alpha$ are the spectral measures corresponding to the vector $b$ (recall that $b(\xi)\equiv 1$ in $L^2(\mu)$), which means that $\cV_\alpha b= \1$. Note that then $\cV_\alpha (\alpha b_1) (\xi) \equiv \overline\xi$. 

To compute $\Phi_{\alpha, \gamma}$  let us begin by making two observations. First, 
\[
\Phi_{\alpha, \gamma} = \cV_\alpha \Phi_\gamma \,,
\]
or, equivalently
\begin{align}
\label{Phi^*_{alpha,gamma}}
\Phi_{\alpha, \gamma}^* =\Phi_\gamma^* \cV_\alpha^*  \,.
\end{align}
The second observation is that an appropriately interpreted  ``universal'' representation formula (Theorem \ref{t-reprB}) gives us a 
formula for $\Phi_{\alpha, \gamma}^*$. Namely, in the spectral representation of the operator $U_\alpha$ (i.e.~on $L^2(\mu_\alpha)$) we can write 
\[
M_\xi +  (\gamma/\alpha - 1) b^\alpha (b_1^\alpha)^* = \cV_\alpha U_\gamma \cV_\alpha^* 
= \cV_\alpha \Phi_\gamma \cM_{\theta_\gamma} \Phi_\gamma^* \cV_\alpha^*
= \Phi_{\alpha,\gamma} \cM_{\theta_\gamma} \Phi_{\alpha,\gamma}^* \,, 
\]
where $b^\alpha =\cV_\alpha b$, $b_1^\alpha = \cV_\alpha (\overline\alpha b_1)$.

But then we can apply Theorem \ref{t-reprB} to get that if $c^{\alpha,\gamma} = \Phi_{\alpha,\gamma}^* b^\alpha$ and 
$c^{\alpha,\gamma}_1 = \Phi_{\alpha,\gamma}^* b^\alpha_1$ then $\Phi_{\alpha, \gamma}^*$ is given by \eqref{f-reprB} with $c^{\alpha,\gamma}$ and $c_1^{\alpha,\gamma}$ instead of $c^\gamma$ and $c_1^\gamma$ respectively in the definition of $A$ and $B$. 

But using \eqref{Phi^*_{alpha,gamma}} we get that 
\[
c^{\alpha, \gamma} = \Phi_{\alpha,\gamma}^* b^\alpha = \Phi^*_\gamma \cV_\alpha^* b^\alpha = \Phi_\gamma^* b = c^\gamma, 
\]
and 
\[
c_1^{\alpha, \gamma} =\Phi_{\alpha,\gamma}^* b_1^\alpha = \Phi^*_\gamma \cV_\alpha^* b_1^\alpha 
=\overline \alpha\Phi^*_\gamma  b_1 = \overline \alpha c_1^\gamma. 
\]
Therefore, to get the formula for $\Phi_{\alpha, \gamma}^*$ with $ \Phi_{\alpha, \gamma}^* b^\alpha = c^\gamma$ (i.e.~such that $\Phi_{\alpha,\gamma}^* \1 = c^\gamma$) one just has to replace in \eqref{f-reprB} $\mu$ by $\mu_\alpha$, 
and $c_1^\gamma$ by $\overline\alpha c_1^\gamma$ ($c^\gamma$ remains the same). 

Let us now consider the Sz.-Nagy--Foia\c{s} transcription. One of the ways to get the formula for $\Phi_{\alpha,\gamma}^*$ would be to take the ``universal formula'' above and then repeat the proof of Theorem \ref{t:AltRepr}. Alternatively, one could compute a formula for the adjoint $\cV_\alpha^\ast$ and use the composition \eqref{Phi^*_{alpha,gamma}}. In fact in Section \ref{s:V_alpha} below we provide a representation for $\cV_\alpha$. Or one can consider the following approach, allowing to almost avoid any calculations. 

Namely, let us look at the formula \eqref{U_alpha_gamma}: The operator $U_\gamma$ is represented here as a perturbation of the operator $U_\alpha$ by the rank one operator $(\gamma/\alpha - 1) b ( \overline\alpha b_1)^*$. 

Note that in the spectral representation of the operator $U_\alpha$ the operator $U_\gamma$ is given by 
\begin{align}
\label{U_alpha_gamma_01}
M_\xi + (\gamma/\alpha - 1) b^\alpha (b_1^\alpha)^*, 
\end{align}
where, recall, $b^\alpha = \cV_\alpha b$, $b_1^\alpha = \overline \alpha \cV_\alpha b_1$, $b^\alpha =\1$, $b_1^\alpha(\xi) \equiv \overline \xi$, $\xi\in \T$. 

Let us compute the characteristic function $\theta^{\alpha}_{\gamma/\alpha}$ of the above operator \eqref{U_alpha_gamma_01} with $b_1^\alpha$ and $b^\alpha$ taken for the basis vectors in the corresponding defect subspaces.  

Then we are in the situation of Section \ref{s:CharFunctU_gamma} with $\mu_\alpha$ instead of $\mu=\mu_1$. 
Therefore the characteristic function $\theta^{\alpha}_{\gamma/\alpha}$ is given by the formulas from Lemmas 
\ref{l:CharFunct-U_0},  \ref{CharFunct-U_gamma} with $\mu_\alpha$ instead of $\mu$ and $\gamma/\alpha$ instead of $\gamma$. 
This gives us a recipe for obtaining formulas for $\theta^\alpha_{\gamma/\alpha}$ from the corresponding formulas  for $\theta_\gamma$. In particular
\[
1-\theta^\alpha_{0}(z) = 1/R\mu_\alpha(z), \qquad z\in\D. 
\] 

On the other hand we know that $\theta^\alpha_{\gamma/\alpha}$ is the characteristic function of the operator $U_\gamma$ with vectors $\overline\alpha b_1$ and $b$ taken for the basis vectors in the corresponding defect subspaces. Since in the definition of $\theta_\gamma$ we took $b_1$ and $b$ for the basis vectors, we can conclude that 
\[
\theta^\alpha_{\gamma/\alpha} = \overline \alpha\theta_\gamma, \qquad \text{and} \qquad \Delta_{\alpha, \gamma} = \Delta_\gamma. 
\]
Combining this formula with the previous identity we get, in particular, that 
\begin{align}
\label{R_mu_alpha}
1- \overline \alpha\theta_0 = 1-\theta^\alpha_{0}(z) = 1/R\mu_\alpha(z), \qquad z\in\D. 
\end{align}

Just to check, the  identity
\[
\theta^\alpha_{\gamma/\alpha} = \frac{\theta^\alpha_0 -\gamma/\alpha}{1-\overline\gamma/\overline\alpha \theta^\alpha_0}
\]
is obtained from \eqref{theta_0-theta_gamma} by the appropriate replacements; and it is an easy calculation to verify that the above identity is equivalent to \eqref{theta_0-theta_gamma}.   

Now let us give the representation of the operator $\Phi_{\alpha, \gamma}^*$ adapted to the Sz.-Nagy--Foia\c{s} transcription, i.e.~the analog of Theorem \ref{t:AltRepr}. Let $T^\alpha_+f$ denote the non-tangential boundary values of $Rf\mu_\alpha(z)$, $z\in\D$. Then replacing in Theorem \ref{t:AltRepr} $\gamma$ by $\gamma/\alpha$, $\theta_\gamma$ by $\theta^\alpha_{\gamma/\alpha} = \overline \alpha \theta_\gamma$, and $\theta_0 $ by $\theta_{\alpha, 0} =\overline\alpha \theta_0$, and $T_+$ by $T_+^\alpha$ we get the following representation for 
$\Phi_{\alpha,\gamma}^*$:  
\begin{align}
\label{AltRepr-alpha}
 (1-|\gamma|^2)^{1/2}
 wt \Phi_{\alpha,\gamma}^* f
 & = 
\left( \begin{array}{c} 0 \\  (\overline\gamma/\overline\alpha - (\overline\gamma/\overline\alpha - 1) T^\alpha_+ \1 )\Delta_\gamma \end{array}
\right) f 
+
\left( \begin{array}{c} (1 + \overline\gamma \theta_\gamma) / T^\alpha_+1 \\  (\overline\gamma/\overline\alpha - 1) \Delta_\gamma \end{array}
\right) T^\alpha_+ f 
\\
\notag
 & = 
\left( \begin{array}{c} 0 \\   \frac{1-\overline\gamma\theta_0}{|1-\overline\gamma\theta_0|}   T_+^\alpha \1 \cdot\Delta_0 \end{array}
\right) f 
+
\left( \begin{array}{c} \frac{1-|\gamma|^2}{1 - \overline\gamma \theta_0} \cdot\frac1{ T^\alpha_+\1} \\  (\overline\gamma /\overline\alpha - 1) \frac{(1-|\gamma|^2)^{1/2}}{|1-\overline\gamma\theta_0|} \Delta_0 \end{array}
\right) T_+^\alpha f \,. 
\end{align}
Note that in the first (top) entry of the formula $\alpha$ in the coefficients cancels out, and in the second entry it does not. 

But the above formula \eqref{AltRepr-alpha} is not yet the formula we are looking for! To get it we applied Theorem \ref{t:AltRepr} with $\mu_\alpha$ instead of $\mu$ and $\theta^\alpha_{\gamma/\alpha} = \overline\alpha\theta_\gamma$ instead of $\theta_\gamma$. But that means that the result in the right hand side of \eqref{AltRepr-alpha} belongs to $\cK_{\overline\alpha \theta}$. So the formula \eqref{AltRepr-alpha} is an absolutely correct formula giving the representation of the operator $\Phi^*_{\alpha,\gamma}$ in the model space $\cK_{\overline\alpha\theta_\gamma}$; that is why we used $\wt \Phi^*_{\alpha,\gamma}$ and not $\Phi^*_{\alpha,\gamma}$ there.

To get the representation with the model space $\cK_{\theta_\gamma}$ we notice that the map
\[
\left( \begin{array}{c} g_1\\ g_2 \end{array} \right) 
\mapsto 
\left( \begin{array}{c} g_1\\ \overline\alpha g_2 \end{array} \right)
\]
is a unitary map from $\cK_{\overline\alpha\theta_\gamma}$ onto $\cK_{\theta_\gamma}$. Moreover, it maps the defect vector $c$ given by equation \eqref{c} for the space $\cK_{\overline\alpha\theta_\gamma}$ to the corresponding defect vector $c$ for the space $\cK_{\theta_\gamma}$. Therefore, to obtain the representation formula for $\Phi_{\alpha,\gamma}^*$ we need to multiply the bottom entries in \eqref{AltRepr-alpha} by $\overline\alpha$, which gives us 
\begin{align}
\label{AltRepr-alpha-01}
 (1-|\gamma|^2)^{1/2}
 \Phi_{\alpha,\gamma}^* f
 & = 
\left( \begin{array}{c} 0 \\  (\overline\gamma - (\overline\gamma - \overline\alpha) T^\alpha_+ \1 )\Delta_\gamma \end{array}
\right) f 
+
\left( \begin{array}{c} (1 + \overline\gamma \theta_\gamma) / T^\alpha_+1 \\  (\overline\gamma -\overline\alpha) \Delta_\gamma \end{array}
\right) T^\alpha_+ f 
\\
\notag
 & = 
\left( \begin{array}{c} 0 \\  \overline\alpha \frac{1-\overline\gamma\theta_0}{|1-\overline\gamma\theta_0|}   T_+^\alpha \1 \cdot\Delta_0 \end{array}
\right) f 
+
\left( \begin{array}{c} \frac{1-|\gamma|^2}{1 - \overline\gamma \theta_0} \cdot\frac1{ T^\alpha_+\1} \\  (\overline\gamma  - \overline\alpha ) \frac{(1-|\gamma|^2)^{1/2}}{|1-\overline\gamma\theta_0|} \Delta_0 \end{array}
\right) T_+^\alpha f \,. 
\end{align}
Just to check: we should have $\Phi^*_{\alpha,\gamma} \1=c$, where $c$ is given by \eqref{c} with $\theta=\theta_\gamma$, and we get this in \eqref{AltRepr-alpha-01}. Indeed, we get from the first line in \eqref{AltRepr-alpha-01} that 
\[
\Phi_{\alpha,\gamma}^* \1 =
\left( \begin{array}{c} 1 + \overline\gamma \theta_\gamma \\  \overline\gamma  \Delta_\gamma \end{array}
\right)
\]
which coincides with \eqref{c} if we recall that $\theta_\gamma(0) =-\gamma$.

\subsection{Connection with the Clark's construction}
\label{us-vs-clark}

D.~Clark in  \cite{Clark} approached to the problem from a different point of view. He started from a model operator $\cM_\theta$ (for an inner $\theta$), considered all its unitary perturbations of rank one and computed spectral measures of these extensions. Let us compare his formulas  with ours. 

Let us start with $\theta \in H^\infty$, $\|\theta \|_\infty\le 1$, and consider all rank one unitary perturbations of the model operator $\cM_\theta$. All such operators are parametrized by a complex parameter $\alpha$, $|\alpha|=1$. Namely, any such perturbation $V_\alpha$ acts as the model operator (equivalently multiplication by $z$) on $\cK_\theta \cap (c_1)^\perp$ and  $V_\alpha c_1 = \alpha c$; here $c$ and $c_1$ are the defect vectors defined by \eqref{c}, \eqref{c_1}. This was exactly the setup considered by Clark in \cite{Clark} for the inner function $\theta$; our $\alpha$ corresponds to his parameter $w$.

In our model we had $\theta(0)=-\gamma$, so let us define $\gamma:=-\theta(0)$. Then clearly 
\[
\cM_\theta = V_1 + (\gamma-1) c (c_1)^*. 
\]
Therefore, if we construct a measure $\mu=\mu_1$ such that $\theta$ is represented by $\theta_\gamma$ in Lemma \ref{CharFunct-U_gamma}, this measure will be the spectral measure of the operator $V_1$, and $V_1\Phi_\gamma^* = \Phi_\gamma^* U_1$. 

We can rewrite the formula in Lemma \ref{CharFunct-U_gamma} as 
\begin{align}
\label{theta_gamma-02}
\theta_\gamma =\omega \frac{R_2\mu - \beta}{R_2\mu +\overline\beta}
\end{align}
where $\omega = \omega_1(\gamma) =\frac{1-\gamma}{1-\overline\gamma}$ (more generally $\omega_{\alpha}(\gamma) = \frac{\alpha-\gamma}{1-\overline\gamma \alpha }$, $\alpha\in\T$),  and $\beta=\frac{1+\gamma}{1-\gamma}$\,.%
\footnote{Note that in \cite{Clark} a different notation is used for the parameters:  
our $\alpha$ corresponds to the parameter $w$ in \cite{Clark} and  our $\omega_\alpha(\gamma)$ corresponds to  $\alpha=\alpha_w$ in \cite{Clark}. }

From \eqref{theta_gamma-02} we get that 
\[
R_2\mu =\frac{\omega\beta + \overline\beta \theta_\gamma}{\omega-\theta_\gamma} =
 \frac{\beta + \overline\beta \overline\omega \theta_\gamma}{1-\overline\omega\theta_\gamma},  
\]
and 
\[
\re (R_2\mu) = \re(\beta) \frac{1-|\theta|^2}{|1-\overline\omega\theta|^2} = \frac{1-|\gamma|^2}{|1-\gamma|^2}  \frac{1-|\theta|^2}{|1-\overline\omega\theta|^2}. 
\]

Since for $\xi\in\T$, $z\in\D$
\[
\re \frac{1+\overline\xi z}{1-\overline\xi z} = \frac{1-|z|^2}{|1-\overline\xi z|^2}
\]
is the Poisson kernel, the measure $\mu=\mu_1$ is the measure whose Poisson extension to the unit disc gives 
\[
\re \frac{\beta + \overline\beta \overline\omega \theta_\gamma}{1-\overline\omega\theta_\gamma} 
= \frac{1-|\gamma|^2}{|1-\gamma|^2}  \frac{1-|\theta|^2}{|1-\overline\omega\theta|^2}.
\]

In Clark's construction his measure (let us call it $\wt\mu=\wt\mu_1$) was defined as the measure whose Poisson extension to the unit disc is 
\[
\re  \frac{1+\overline\omega  \theta}{1-\overline\omega\theta} =  \frac{1-|\theta|^2}{|1-\overline\omega\theta|^2}, 
\]
so
$\mu= (1-|\gamma|^2)|1-\gamma|^{-2}\wt\mu$ (note that $\mu=\wt\mu$ for $\gamma=0$).  

For an inner $\theta$ the adjoint Clark  operator was defined in \cite{Clark} as 
\begin{align}
\label{Phi^*-Clark}
f\mapsto (1-\overline\omega \theta) R f\mu. 
\end{align}
From \eqref{NCT-01} we get using $1-\theta_0=R\mu$ that 
for inner $\theta=\theta_\gamma$ 
\[
\Phi_\gamma^* f = \frac{1-\gamma}{(1-|\gamma|^2)^{1/2}}(1-\overline\omega \theta) Rf\mu, 
\]

If $\gamma=-\theta(0)=0$ our formula coincides with one presented in \cite{Clark}. If $\gamma\ne 0$ the two formulas differ by a constant factor $(1-\gamma)(1-|\gamma|^2)^{-1/2}$; its modulus compensates for the different normalization of the measures. 

To get the spectral measures $\mu_\alpha$ in our model we need to replace $\theta_\gamma$ by $\theta^\alpha_\gamma= \overline\alpha \theta_\gamma$. Since
\[
\omega_\alpha(\gamma) := \frac{\alpha-\gamma}{1-\overline\gamma\alpha} = \alpha \frac{1-\gamma\overline\alpha}{1-\overline\gamma \alpha} = \alpha \omega_1(\gamma\overline\alpha), 
\]
this replacement is equivalent to using $\omega_\alpha(\gamma)$ for $\omega$, which is exactly the way to get $\wt\mu_\alpha$ in the Clark construction. Therefore again, we have $\mu_\alpha= (1-|\gamma|^2)|1-\gamma|^{-2}\wt\mu_\alpha$. 

The adjoint Clark operator $\wt\Phi_{\alpha,\gamma}^*:L^2(\wt\mu_\alpha)\to \cK_\theta$ from the Clark's  construction is given by 
\begin{align}
\label{Phi^*-Clark-02}
\wt\Phi^*_{\alpha,\gamma} f = (1-\overline{\omega} \,\theta) Rf\wt\mu_\alpha, 
\qquad \omega= \omega_\alpha(\gamma). 
\end{align}

Looking at the top entry in \eqref{AltRepr-alpha-01} we get that in our construction
\[
 \Phi_{\alpha,\gamma}^* f = \frac{1-\gamma\overline\alpha}{(1-|\gamma|^2)^{1/2}}(1-\overline\omega \theta) Rf\mu,
\qquad \omega= \omega_\alpha(\gamma). 
\]
Again the two formulas differ by a constant factor $(1-\gamma\overline\alpha) (1-|\gamma|^2)^{-1/2}$, whose absolute value compensate for the different normalization of the measures. 

\begin{rem}
In \cite{SAR} D.~Sarason constructed a unitary operator between $H^2(\mu)= \spa\{z^n: n\in\Z_+\}$ and the de Branges space $\cH(\theta)$. Like D.~Clark, he started with the model space $\cH(\theta)$ (he used $b$ for $\theta$) and then obtained the measure $\mu$ and the corresponding unitary operator. 

In his construction the measure $\mu$ coincides with the measure $\wt\mu_\alpha$ in the Clark's construction, where $\alpha= (1+\gamma)(1+\overline\gamma)^{-1}$, so $\omega=\omega_\alpha(\gamma) =1$. 
His operator is  given by \eqref{Phi^*-Clark-02} with $\alpha$ and $\omega$ as above.  

If $\theta$ is an extreme point of the unit ball in $H^\infty$, then $H^2(\mu)=L^2(\mu)$ and there is a canonical isomorphism (see Remarks \ref{r:dBR_model} and \ref{r:dBr-R_orig} above)  between the space  $\cH(\theta)$ and the model space $\cK_\theta$ in the de Branges--Rovnyak transcription. Thus, in \cite{SAR} a description of the adjoint Clark operator was obtained in the case of $\theta $ being an extreme point.

\end{rem}

\section{Appendix: unitary operators intertwining spectral representations  of unitary rank \texorpdfstring{$1$}{1}  perturbations}

\label{s:V_alpha}

A natural question would be to describe an operator intertwining $U_\alpha$, $|\alpha|=1$ and its spectral representation, the multiplication $M_z$ by $z$ in $L^2(\mu_\alpha)$, i.e.~to describe a unitary operator $\cV_\alpha :L^2(\mu)\to L^2(\mu_\alpha)$ such that 
\begin{align}
\label{ComRel-02}
\cV_\alpha U_\alpha   = M_z \cV_\alpha .
\end{align}
Here, recall $U=U_1$ is the multiplication $M_\xi$ by the independent variable $\xi$ in $L^2(\mu)$, and $U_\alpha= U+(\alpha-1)bb_1^*$, $b\equiv\1$, $b_1 = U^* b \equiv \1$, and $\mu_\alpha$ is the spectral measure of $U_\alpha$ corresponding to the cyclic vector $b$. 

This problem is  easier than describing the adjoint Clark operator, and the authors new the answer for some time. 
In \cite{mypaper} the authors treated the case of self-adjoint rank one perturbations, and derived a formula for the unitary operator corresponding to $\cV_\alpha$. Transferring the proofs from \cite{mypaper} to the unitary settings one immediately gets the desired description. In fact, some of the proofs will be easier in the unitary case, because one does not have to worry about unbounded operators; also, since in the unitary case we work on the unit circle, one does not have to worry about singularity of the kernel at $\infty$. 

Here for the readers' convenience we present the complete the  formulas for $\cV_\alpha$ in the case of rank one unitary perturbations. There are now new ideas here (comparing with \cite{mypaper} and with what was done here for adjoint Clark operators), so we do not present the proofs here: we only state the results and give a very brief outline of the proofs.


\begin{theo}[Representation Theorem]\label{t-repr-V}
Let $\cV_\alpha: L^2(\mu)\to L^2(\mu_\alpha)$ be a unitary operator satisfying \eqref{ComRel-02} and such that $\cV_\alpha \1 =\1$ (which means that $\mu_\alpha$ is the spectral measure of $U_\alpha$ corresponding to the cyclic vector $b$, $b(\xi)\equiv\1$). Then 
\begin{equation}
\label{repr-V}
\cV_\alpha f (z)= f(z) +(1-\alpha)\int_\T \frac{f(\xi)-f(z)}{1-\bar\xi z}\,d\mu(\xi)\qquad
\text{for all }f\in C^1(\T).
\end{equation}
\end{theo}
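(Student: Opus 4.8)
The plan is to mimic the proof of Theorem~\ref{t-reprB}, which is in fact a bit simpler in the present situation: since $M_z$ on $L^2(\mu_\alpha)$ is genuinely unitary (it is the spectral representation of $U_\alpha$), in addition to \eqref{ComRel-02} we automatically have the adjoint relation $\cV_\alpha U_\alpha^* = M_{\bar z}\cV_\alpha$, so the ``surprise'' cancellations needed in Theorem~\ref{t-reprB} for the antianalytic part do not arise here.

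First I would record two elementary facts. One is $\cV_\alpha b = \1$, which is the hypothesis (equivalently, that $\mu_\alpha$ is the spectral measure of $U_\alpha$ for the vector $b$). The other is $\cV_\alpha b_1 = \alpha\bar z$: from $U_\alpha = U + (\alpha-1)bb_1^*$ one computes $U_\alpha^* b = \bar\alpha b_1$, hence $b_1 = \alpha U_\alpha^* b$ and $\cV_\alpha b_1 = \alpha M_{\bar z}\cV_\alpha b = \alpha\bar z$. Writing $U=U_1 = U_\alpha - (\alpha-1)bb_1^*$ and $U_1^* = U_\alpha^* - (\bar\alpha-1)b_1b^*$, applying $\cV_\alpha$ and using $|\alpha|=1$ (so $(\bar\alpha-1)\alpha = 1-\alpha$) gives the two commutation relations
\begin{align*}
\cV_\alpha U_1 &= M_z\cV_\alpha - (\alpha-1)\,\1\,b_1^*, \\
\cV_\alpha U_1^* &= M_{\bar z}\cV_\alpha - (1-\alpha)\,\bar z\,b^*.
\end{align*}

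Next, iterating the first relation (right-multiplying by $U_1$ repeatedly, exactly as in Theorem~\ref{t-reprB}) yields, for every $n\ge1$,
\[
\cV_\alpha U_1^n = M_z^n\cV_\alpha - (\alpha-1)\sum_{k=1}^n z^{k-1}\bigl((U_1^*)^{n-k}b_1\bigr)^*.
\]
Applying both sides to $b\equiv\1$, using $U_1^n b = \xi^n$ and $\bigl((U_1^*)^{n-k}b_1\bigr)^* b = \int_\T\xi^{n-k+1}\,d\mu(\xi)$, summing the geometric series $\sum_{k=1}^n z^{k-1}\xi^{n-k+1} = (\xi^n-z^n)/(1-\bar\xi z)$, and noting $-(\alpha-1)=1-\alpha$, gives \eqref{repr-V} for $f(\xi)=\xi^n$, $n\ge0$. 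Iterating the second relation in the same way and applying it to $b\equiv\1$, with $(U_1^{n-k}b)^*\1 = \int_\T\bar\xi^{n-k}\,d\mu(\xi)$ and $-\sum_{k=1}^n\bar z^k\bar\xi^{n-k} = (\bar\xi^n-\bar z^n)/(1-\bar\xi z)$, gives \eqref{repr-V} for the antianalytic monomials $f(\xi)=\bar\xi^n$. By linearity \eqref{repr-V} then holds for all trigonometric polynomials.

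Finally, to extend \eqref{repr-V} from trigonometric polynomials to $f\in C^1(\T)$ I would run the approximation argument from the end of the proof of Theorem~\ref{t-reprB}: pick trigonometric polynomials $p_k$ with $p_k\rightrightarrows f$ and $p_k'\rightrightarrows f'$ on $\T$; then $p_k\to f$ in $L^2(\mu)$, so $\cV_\alpha p_k\to\cV_\alpha f$ in $L^2(\mu_\alpha)$ by unitarity, while the bound $|(f-p_k)(\xi)-(f-p_k)(z)|\le\tfrac\pi2\|f'-p_k'\|_\infty|\xi-z|$ together with $|1-\bar\xi z| = |\xi-z|$ for $\xi,z\in\T$ shows that $\int_\T\frac{p_k(\xi)-p_k(z)}{1-\bar\xi z}\,d\mu(\xi)$ converges to its analogue for $f$ uniformly on $\T$, hence in $L^2(\mu_\alpha)$ (both $\mu$ and $\mu_\alpha$ are probability measures). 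I do not anticipate a genuine obstacle: the proof is a routine adaptation of Theorem~\ref{t-reprB}, and the only points that require care are the bookkeeping of constants in the two commutation relations --- in particular the identity $\cV_\alpha b_1 = \alpha\bar z$, which drives the antianalytic half --- and the passage to the $L^2(\mu_\alpha)$-limit.
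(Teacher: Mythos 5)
Your proposal is correct and follows essentially the same route as the paper's own (sketched) proof: the same commutation relations $\cV_\alpha U_1 = M_z\cV_\alpha + (1-\alpha)\1 b_1^*$ and its adjoint counterpart, the same induction and geometric-series summation applied to $b\equiv\1$ for analytic and antianalytic monomials (using $\cV_\alpha b_1=\alpha\bar z$), and the same $C^1$ approximation argument borrowed from Theorem \ref{t-reprB}. The only cosmetic difference is that you derive $\cV_\alpha b_1=\alpha\bar z$ from $U_\alpha^* b=\bar\alpha b_1$ rather than from $U_\alpha b_1=\alpha b$, which is immaterial.
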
 

As the following proof merges tools from \cite{mypaper} with methods from Section \ref{s:UnivRepr}, we omit details and sketch only the main steps.

\begin{proof}[Main steps of the proof of Theorem \ref{t-repr-V}]
Recalling that 
\[
U_\alpha =U_1 + (\alpha-1) b  b_1^\ast= M_\xi + (\alpha-1) b b_1^\ast, \qquad b(\xi)\equiv \1, \ b_1(\xi)\equiv \overline\xi,
\]
$M_\xi f(\xi) =\xi f(\xi)$, we get from 
the intertwining relationship \eqref{ComRel-02} that
\[
\cV_\alpha U_1 = M_z \cV_\alpha +(1-\alpha) (\cV_\alpha b) b_1^\ast.
\]
Inductively one can show that
\[
\cV_\alpha U_1^n = M_z^n \cV_\alpha +(1-\alpha) \sum_{k=1}^n M_z^{k-1} (\cV_\alpha b)  \left((U_1^*)^{n-k} b_1\right)^\ast.
\]
Applying this formula to the function $b\equiv \1\in L^2(\mu)$ and recalling that $(U_1^n b)(\xi)=\xi^n$, $\cV_\alpha b=\1$, $(U_1^*)^{n-k} b_1\equiv \xi^{n-k+1}$ we obtain summing the geometric series
\[
(\cV_\alpha \xi^n )(z)= z^n +(1-\alpha)\int_\T \frac{\xi^n-z^n}{1-\bar\xi z}\,d\mu(\xi).
\]

The action of $\cV_\alpha$ on $\bar\xi^n$ is proved similarly.
Namely, the adjoint of the intertwining formula \eqref{ComRel-02} yields $\cV_\alpha U_\alpha^\ast = M_{\bar z} \cV_\alpha$, so
\[
\cV_\alpha U_1^* = M_{\overline z} \cV_\alpha + (1-\overline\alpha)(\cV_\alpha b_1) b^* .
\]
By induction we get 
\begin{align}
\label{induct-03}
\cV_\alpha (U_1^*)^n = M_{\overline z}^n \cV_\alpha + (1-\overline\alpha) \sum_{k=1}^n M_{\overline z}^{k-1}\cV_\alpha b_1 (U_1^{n-k} b)^*  . 
\end{align}
Applying the identity $\cV_\alpha U_\alpha = M_z \cV_\alpha $ to the vector $b_1$ and using the fact that $\cV_\alpha b\equiv \1$, we get that $\cV_\alpha b_1\equiv \alpha \overline z$. Then, applying the identity \eqref{induct-03} to the vector $b\equiv \1\in L^2(\mu)$ and summing the geometric progression we get that \eqref{repr-V} holds for $f(\xi)\equiv \overline\xi^n$.


Thus, \eqref{repr-V} holds for all trigonometric polynomials $f$. The same approximation reasonings as in the proof of Theorem \ref{t-reprB} give the result for all $f\in C^1$. 
\end{proof}

Let $T_r=T_r^{1,\alpha}:L^2(\mu)\to L^2(\mu_\alpha)$ be the integral operators with kernel $1/(1- r\overline\xi z)$, $r\in\R_+\setminus\{1\}$; we use indices $1,\alpha$ to indicate the measures in the domain ($\mu=\mu_1$) and in the target space. Acting exactly as in Section \ref{s:SIO-reg} we can show that the operators $T_r^{1,\alpha}$ with the kernel $1/(1- r\overline\xi z)$ are uniformly (in $r\in\R_+\setminus\{1\}$) bounded. In Section \ref{s:SIO-reg} we proved this fact for the measure $v|dz|$ instead of $\mu_\alpha$, but for the proof to work we only need that the measures $\mu$ and $\mu_\alpha$ do not have common atoms. And that is definitely the case, since by the Aronszajn--Donoghue theorem the singular parts of $\mu$ and $\mu_\alpha$ are mutually singular. In fact, we do not need the full Aronszajn--Donoghue theorem here: we only need the fact that the operators $U_1$ and $U_\alpha$ do not have common eigenvalues, which is a simple exercise (recall that $b$ is a cyclic vector for $U_1$).

Since the operators $T^{1,\alpha}_r$ are uniformly bounded, we can consider the limits (in the weak operator topology)
\begin{align}
\label{WOT-lim-02}
T^{1,\alpha}_\pm := \text{w.o.t-}\lim_{r\to 1^{\mp}} T^{1,\alpha}_r
\end{align}

Note, that uniform boundedness of $T^{1,\alpha}_r$ is not enough for the existence of the limit, it only implies the existence of  accumulation points (in the weak operator topology). However, the existence of boundary values imply, see Proposition \ref{p:wot}, the existence of the limit. Namely, one can see from the proof of Proposition \ref{p:wot}  that if $\lim_{r\to 1^\mp} T_r^{1,\alpha} f =: T^{1,\alpha}_\pm f$ $\mu_\alpha$-a.e., then $\text{w-}\lim_{r\to 1^\mp} T_r^{1,\alpha} f =: T^{1,\alpha}_\pm f$.  

The existence of boundary values follows from Proposition \ref{p:BV-02} below. 

\begin{prop}
\label{p:BV-02}
Let $f\in L^2(\mu)$, and let 
\[
F(z) = R f\mu(z) = \int_\T \frac{f(\xi) }{1-\overline \xi z} d\mu(\xi), \qquad |z|\ne 1. 
\]
Then non-tangential boundary values of $F(z)$ as $|z|\to 1^\mp$ exist $\mu_\alpha$-a.e.

In particular, the limits
\[
\lim_{r\to 1^\mp} T_r^{1,\alpha} f(z)
\]
exist $\mu_\alpha$-a.e.
\end{prop}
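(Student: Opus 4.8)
The plan is to establish the existence of non-tangential boundary values of the Cauchy transform $F(z) = Rf\mu(z)$ from both sides of $\T$, $\mu_\alpha$-almost everywhere, by reducing the statement to known results about Cauchy transforms of measures together with the mutual singularity (on the singular parts) and the absolute continuity of the Lebesgue parts. First I would split $f\mu$ into its absolutely continuous and singular parts with respect to Lebesgue measure, writing $f\mu = h\,dm + \nu_s$, where $h \in L^1(m)$ and $\nu_s \perp m$. For the absolutely continuous piece, $Rh\,dm$ is (a constant multiple of, plus a trivial term from) the Cauchy projection of an $L^1$ function, so its non-tangential boundary values from inside and outside $\D$ exist $m$-a.e., hence $\mu_{\alpha,a}$-a.e.; since on the singular part of $\mu_\alpha$ the measure $m$ carries no mass, we only need to worry about the singular contribution $\nu_s$ on $\mathrm{supp}\,\mu_{\alpha,s}$.

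For the singular piece I would invoke the classical fact (Poltoratskii, cf.\ \cite{NONTAN}, and the references \cite{poltsara2006} already cited in the excerpt) that for a singular measure $\nu_s$ on $\T$, the Cauchy transform $R\nu_s(z)$ has finite non-tangential boundary values at $|\nu_s|$-a.e.\ point, but more importantly that these boundary values exist and are finite at $\lambda$-a.e.\ point for \emph{any} singular measure $\lambda$ that is mutually singular with $\nu_s$. By the Aronszajn--Donoghue theorem (invoked elsewhere in the paper — we only need that $U_1$ and $U_\alpha$ share no eigenvalues, hence $\mu_s \perp \mu_{\alpha,s}$), the measure $\mu_{\alpha,s}$ is carried by a Borel set disjoint (up to null sets) from a carrier of $\mu_s$, and hence of $\nu_s$ up to $m$-null adjustments. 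Thus $R\nu_s$ has finite non-tangential limits $\mu_{\alpha,s}$-a.e.\ from both sides. Combining the two pieces gives the existence of non-tangential boundary values of $F$, $\mu_\alpha$-a.e., from inside and from outside the disc.

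The last assertion about $\lim_{r\to 1^\mp} T_r^{1,\alpha} f(z)$ is then immediate: $T_r^{1,\alpha} f(z) = F(rz)$ for $r<1$ (and $= F(z/r)$ for $r>1$), so the radial limits are a special case of the non-tangential ones and exist $\mu_\alpha$-a.e.

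The main obstacle is the singular part: one must be careful that "non-tangential boundary values exist $\mu_\alpha$-a.e." genuinely follows from the mutual-singularity statement, i.e.\ that the relevant theorem about Cauchy transforms of singular measures is the sharp two-measure version (boundary values exist $\lambda$-a.e.\ whenever $\lambda \perp \nu_s$), not merely the statement that they exist $|\nu_s|$-a.e.; this is exactly the content one extracts from Poltoratskii's work and the surrounding literature, and citing it precisely is the delicate point. Everything else — the $L^1$ Cauchy-transform boundary value theorem and the reduction via Aronszajn--Donoghue — is standard.
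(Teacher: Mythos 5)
Your handling of the absolutely continuous part, and the final remark that $T_r^{1,\alpha}f(z)=F(rz)$ (resp.\ $F(z/r)$) reduces the radial limits to the non-tangential ones, are fine. The gap is the singular part: the ``classical fact'' you invoke --- that for a singular measure $\nu_s$ the Cauchy transform $R\nu_s$ has finite non-tangential boundary values $\lambda$-a.e.\ for \emph{every} singular $\lambda$ with $\lambda\perp\nu_s$ --- is not a theorem, and it is false in general. Mutual singularity gives no control of boundary behavior: since $\re R\nu_s$ equals half the Poisson integral of $\nu_s$ plus a constant, it is enough to produce mutually singular singular measures with $P\nu_s\to\infty$ non-tangentially $\lambda$-a.e., and this is easy. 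Take $\lambda$ carried by a thin compact Lebesgue-null set $E$ and let $\nu_s$ be a countable sum of point masses of finite total mass, placed in $\T\setminus E$ but accumulating to $E$ so fast, and with weights so large compared to their distances to $E$, that the lower symmetric derivative of $\nu_s$ is $+\infty$ at every point of $E$; then $R\nu_s$ has no finite non-tangential limit $\lambda$-a.e. Poltoratskii's theorem, which you cite in support, concerns the \emph{normalized} transform $R(f\nu)/R\nu$ and gives boundary values only a.e.\ with respect to the singular part of the \emph{same} measure $\nu$; it says nothing about an unrelated mutually singular measure. So Aronszajn--Donoghue (no common atoms / mutually singular singular parts) cannot by itself yield the conclusion.

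What makes the proposition true is the specific Clark relation between $\mu$ and $\mu_\alpha$, and this is the route the paper takes: with $f_\alpha=\cV_\alpha f$ and $g=\Phi_0^* f=\Phi_{\alpha,0}^* f_\alpha$, comparing the two formulas for the top entry $g_1$ in the Sz.-Nagy--Foia\c{s} transcription gives
\[
(1-\theta_0(z))\,Rf\mu(z)=(1-\overline\alpha\,\theta_0(z))\,Rf_\alpha\mu_\alpha(z), \qquad z\in\D,
\]
hence $Rf\mu=(1-\theta_0)^{-1}\,Rf_\alpha\mu_\alpha/R\mu_\alpha$. Poltoratskii's theorem is then applied with respect to $\mu_\alpha$ (not $\mu$): the normalized transform $Rf_\alpha\mu_\alpha/R\mu_\alpha$ has non-tangential limits a.e.\ with respect to the singular part of $\mu_\alpha$, while there $\theta_0\to\alpha$ (because $R\mu_\alpha=1/(1-\overline\alpha\theta_0)$ and $|R\mu_\alpha|\to\infty$), so the factor $(1-\theta_0)^{-1}$ also has a finite limit when $\alpha\ne1$. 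The exterior case $|z|>1$ is reduced to the interior one by the reflection $w=1/\overline z$ via $Rf\mu(z)=\int_\T f\,d\mu-\overline{(R\overline f\mu)(w)}$. In short, the missing ingredient in your argument is this identity tying $Rf\mu$ to a normalized Cauchy transform over $\mu_\alpha$; the a.c./singular decomposition plus mutual singularity cannot replace it.
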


\begin{proof}
Existence of the boundary values with respect to the Lebesgue measure (and so with respect to the absolutely continuous part of $\mu_\alpha$) follows from the classical results. 

To prove the existence of the boundary values $(\mu_\alpha)\ti s$-a.e.~we will use Poltoratskii's theorem. 

Take $f\in L^2(\mu)$ and let $f_\alpha := \cV_{\alpha} f $, $f_\alpha\in L^2(\mu_\alpha)$ (note, that as an abstract operator $\cV_\alpha$ is well defined. Define $g\in \cK_{\theta_0}$,  $g:= \Phi^*_0 f$. 

Then clearly, see \eqref{Phi^*_{alpha,gamma}}
\[
g:= \Phi^*_{\alpha,0} f_\alpha. 
\]
Considering the Sz.-Nagy--Foia\c{s} transcription and comparing formulas for $g_1$ we get, see \eqref{AltRepr-alpha-01} and \eqref{R_mu_alpha},
\[
(1-\theta_0(z)) R f\mu(z) = (1-\overline \alpha \theta_0(z)) R f_\alpha \mu_\alpha (z), \qquad z\in \D.
\]
Therefore 
\[
R f\mu(z) = (1-\theta_0(z))^{-1} \frac{R f_\alpha\mu_\alpha (z)}{R \mu_\alpha (z)}\,, \qquad z\in\D.
\]
(Note that the latter equality can also be deduced from the standard resolvent identities for rank one perturbations, see \cite{AbaPolt}.)

By Polotratkii's theorem, see \cite[Theorem 2.7]{NONTAN}, the non-tangential boundary values of $Rf_\alpha\mu_\alpha/R\mu_\alpha$ exist (and coincide with $f_\alpha$) $(\mu_\alpha)\ti s$-a.e. On the other hand, $(\mu_\alpha)\ti s$-a.e.~the non-tangential boundary values of $\theta_0$ exist and equal $\alpha$ (this immediately follows from the fact that $R\mu_\alpha = (1 + \overline \alpha \theta_0)/(1-\overline\alpha\theta_0)$). Thus, for $|z|<1$ the non-tangential boundary values of $Rf\mu$ exist (and equal $\overline\alpha f_\alpha$) $(\mu_\alpha)\ti s$-a.e. 

To treat the case $|z|>1$ let us denote $w:=1/\overline z$. Since
\[
\frac1{1-\overline \xi z}= \frac{-\xi \overline w}{1- \xi \overline w} = 1 - \frac{1}{1- \xi \overline w}, 
\]
we conclude that 
\[
R f \mu (z) = \int_\T f d\mu - \overline{\left(R \overline f  \mu \right) (w)}, 
\]
and the existence of the non-tangential boundary values follows from the case $|z|<1$. 
\end{proof}





Replacing the kernel in  \eqref{repr-V} by $1/(1-r\overline\xi z)$ and  taking the limit as $r\to 1^\mp$,  
we get  different formulas for  $\cV_\alpha$,
\begin{align}
\label{repr-V-01}
 \cV_\alpha f  &= [\1 - (1-\alpha)T_\pm^{1,\alpha}\ID] f +(1-\alpha)T_\pm^{1,\alpha} f; 
\end{align}
here $T^{1,\alpha}_\pm$ are defined above in  \eqref{WOT-lim-02}.



Legality of taking the  limit is justified exactly  the same as in the beginning of the proof of Theorem \ref{t:AltRepr}.

\begin{rem}
In fact, it is not necessary to use the full power of Poltoratskii's theorem to prove the weak convergence. In fact, one can, as it was done in the proof of Theorem 3.2 in \cite{mypaper}, use  the existence $(\mu_\alpha)\ti s$-a.e.~of non-tangential boundary values of $R\mu = 1/(1-\theta_0)$ to get the $(\mu_\alpha)\ti s$-a.e.~existence of the boundary values of $R f\mu$ for $f\in C^1$. This would imply the weak convergence of $T_r^{1,\alpha} f$ on a dense set of $C^1$ functions, and thus the convergence of $T_r^{1,\alpha}$ in the weak operator topology. 
\end{rem}

\subsection{Rigidity Theorem}
The following rigidity result can be understood as a converse to the  Representation Theorem (Theorem \ref{t-repr-V}).
Corresponding result for self-adjoint perturbations was proved by the authors in \cite{mypaper}, see Theorem 2.2 there. 

\begin{theo}[Rigidity Theorem]
\label{rigTM}
Let a probability measure $\mu$ on $\T$ be supported on at least two distinct points.
Let $\alpha\in \T\setminus \{1\}$, and let $\cV f$ be defined for $C^1$ functions $f$ by the right hand side of  \eqref{repr-V}.

Assume $\cV$ extends to a bounded operator from $L^2(\mu)$ to $L^2(\nu)$ and assume $\Ker \cV=\{0\}$.

Then there exists a function $h$ such that $1/h\in L^\infty(\nu)$, and $M_h \cV$ is a unitary operator from $L^2(\mu)\to L^2(\nu)$ (equivalently, that $\cV:L^2(d\mu)\to L^2( |h|^{2}\,d\nu)$ is unitary). 

Moreover, the measure $|h|^2 \nu$ is exactly the Clark measure $\mu_\alpha$ from Proposition \ref{p:BV-02}, and $\cV$ treated as the operator $L^2(\mu)\to L^2(\mu_\alpha)$ is exactly the operator $\cV_\alpha$ from that proposition. 
%
\end{theo}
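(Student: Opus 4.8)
The key point is that $\cV$, although defined only as a bounded singular integral operator $L^2(\mu)\to L^2(\nu)$ via \eqref{repr-V}, in fact intertwines the \emph{unitary} operator $U_\alpha$ on $L^2(\mu)$ with multiplication by $z$ on $L^2(\nu)$; once this is in hand, cyclicity of $b$ and the spectral theorem force $\nu$ to be dominated by the Clark measure $\mu_\alpha$, and the remaining assertions are measure-theoretic bookkeeping. First I would establish the intertwining. Running the computation from the proof of Theorem \ref{t-repr-V} in reverse, one obtains on trigonometric polynomials the operator identity $\cV U_1 = M_z\cV + (1-\alpha)(\cV b)\,b_1^\ast = M_z\cV + (1-\alpha)\,\1\,b_1^\ast$ (here $M_z$ is multiplication by $z$; this recursion is exactly the one from which \eqref{repr-V} was derived, and $\cV b=\1$ is read off from \eqref{repr-V} by plugging in the constant function). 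Since $U_1$, $M_z$ (which is unitary on $L^2(\nu)$, as $|z|=1$ $\nu$-a.e.), the rank one operator $\1\,b_1^\ast$, and---by hypothesis---$\cV$ are all bounded, this identity extends from polynomials to all of $L^2(\mu)$. Combined with $U_\alpha=U_1-(1-\alpha)b\,b_1^\ast$ it gives $\cV U_\alpha=M_z\cV$; inverting $U_\alpha$ and $M_z$ yields $\cV U_\alpha^\ast=M_{\bar z}\cV$, whence by induction and continuity of the continuous functional calculus $\cV\,\varphi(U_\alpha)=M_\varphi\,\cV$ for every $\varphi\in C(\T)$, where $M_\varphi$ is multiplication by $\varphi$ on $L^2(\nu)$.

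Next I would transfer boundedness of $\cV$ into a domination of measures. For $\varphi\in C(\T)$, using $\cV b=\1$ and that $\mu_\alpha$ is the spectral measure of $U_\alpha$ for the cyclic vector $b$,
\[
\|\varphi\|_{L^2(\nu)}=\|M_\varphi\1\|_{L^2(\nu)}=\|\cV\,\varphi(U_\alpha)b\|_{L^2(\nu)}\le\|\cV\|\,\|\varphi(U_\alpha)b\|_{L^2(\mu)}=\|\cV\|\,\|\varphi\|_{L^2(\mu_\alpha)}.
\]
Hence $\int_\T|\varphi|^2\,d\nu\le\|\cV\|^2\int_\T|\varphi|^2\,d\mu_\alpha$ for all $\varphi\in C(\T)$, which (approximating indicators of open sets and using outer regularity) forces $\nu\le\|\cV\|^2\mu_\alpha$ as measures; write $d\nu=g\,d\mu_\alpha$ with $g\in L^\infty(\mu_\alpha)$, $0\le g\le\|\cV\|^2$, so in particular $L^2(\mu_\alpha)\hookrightarrow L^2(\nu)$ boundedly.

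Then I would use injectivity. Let $\cV_\alpha\colon L^2(\mu)\to L^2(\mu_\alpha)$ be the canonical spectral representation unitary ($\cV_\alpha b=\1$, $\cV_\alpha U_\alpha=M_z\cV_\alpha$). The composition $\cV\cV_\alpha^{-1}\colon L^2(\mu_\alpha)\to L^2(\nu)$ is bounded and injective, and for $\varphi\in C(\T)$ one computes $\cV\cV_\alpha^{-1}\varphi=\cV\,\varphi(U_\alpha)b=M_\varphi\cV b=\varphi$; by density of $C(\T)$ in $L^2(\mu_\alpha)$ and the bounded inclusion above, $\cV\cV_\alpha^{-1}$ must equal the canonical inclusion $\iota\colon\psi\mapsto\psi$ of $L^2(\mu_\alpha)$ into $L^2(\nu)$. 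But $\iota$ is injective precisely when $g>0$ $\mu_\alpha$-a.e., so $\Ker\cV=\{0\}$ forces $g>0$ $\mu_\alpha$-a.e. Now put $h:=g^{-1/2}$; then $1/h=g^{1/2}\in L^\infty(\nu)$ and $|h|^2\nu=\mu_\alpha$, and $\psi\mapsto h\psi$ is isometric from $L^2(\mu_\alpha)$ onto $L^2(\nu)$ since $\int|h\psi|^2\,d\nu=\int g^{-1}|\psi|^2 g\,d\mu_\alpha=\int|\psi|^2\,d\mu_\alpha$. Composing with $\cV_\alpha$ shows $M_h\cV=M_h\,\iota\,\cV_\alpha$ is unitary $L^2(\mu)\to L^2(\nu)$, equivalently $\cV\colon L^2(\mu)\to L^2(|h|^2\nu)=L^2(\mu_\alpha)$ is unitary. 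Finally, $\cV$ and $\cV_\alpha$ are both the (unique) bounded extension to $L^2(\mu)$ of the formula \eqref{repr-V} on the dense set $C^1(\T)$, so they coincide; this identifies $|h|^2\nu$ with the Clark measure $\mu_\alpha$ of Proposition \ref{p:BV-02} and $\cV$ with $\cV_\alpha$.

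\textbf{Main obstacle.} The substance is entirely in the first step: recognizing that the singular integral operator $\cV$ intertwines $U_\alpha$ with $M_z$, and carrying the polynomial identity over to $L^2(\mu)$ using only the boundedness hypothesis. The later steps are routine, but one must be careful that $h=g^{-1/2}$ itself need not be bounded (only $1/h\in L^\infty(\nu)$): the multiplication $M_h$ is applied only to elements of the range of $\cV$, which lies in $L^2(\mu_\alpha)=L^2(|h|^2\nu)\subset L^2(\nu)$, and there it is genuinely unitary onto $L^2(\nu)$.
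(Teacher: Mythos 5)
Your proposal is correct, but after the first (shared) step it takes a genuinely different route from the paper. Both arguments begin the same way: verify from \eqref{repr-V}, on the monomials $\xi^n$, $n\in\Z$, the identity $M_z\cV-\cV M_\xi=(\alpha-1)\,\1\,b_1^*$, extend it to all of $L^2(\mu)$ by boundedness and density of trigonometric polynomials, and rewrite it as the intertwining $M_z\cV=\cV U_\alpha$. From there the paper stays inside operator theory on $L^2(\nu)$: it shows $\cV\cV^*$ commutes with $M_z$, proves $\Ker\cV^*=\{0\}$ by a geometric positivity argument with two well-separated arcs (this is precisely where the hypothesis that $\supp\mu$ contains at least two points is used, via $\re(1-\overline\xi z)\ge\delta>0$), and then takes the polar decomposition $\cV^*=\wt U\,|\cV^*|$, where the commutation gives $|\cV^*|=M_{1/h}$ and $\wt U$ is unitary because $\Ker\cV=\Ker\cV^*=\{0\}$; the identification of $|h|^2\nu$ with $\mu_\alpha$ comes only at the very end. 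You instead push the intertwining through the continuous functional calculus and, using $\cV\1=\1$ together with $\|\varphi(U_\alpha)b\|_{L^2(\mu)}=\|\varphi\|_{L^2(\mu_\alpha)}$, obtain the measure domination $\nu\le\|\cV\|^2\mu_\alpha$; this identifies $\cV=\iota\cV_\alpha$ with $\iota$ the bounded inclusion $L^2(\mu_\alpha)\hookrightarrow L^2(\nu)$ (whose range is automatically dense), so $\Ker\cV=\{0\}$ forces the density $g=d\nu/d\mu_\alpha$ to be positive a.e.\ and $h=g^{-1/2}$ does the job; note that your $h$ agrees with the paper's, since $\cV\cV^*=\iota\iota^*=M_g$. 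What each approach buys: yours makes $h$ explicit as a Radon--Nikodym density, front-loads the comparison with the Clark measure so that $|h|^2\nu=\mu_\alpha$ and $\cV=\cV_\alpha$ are immediate, and never needs dense range of $\cV$ --- in particular the two-point-support hypothesis is not used anywhere in your argument, whereas in the paper it enters only through the arc argument for $\Ker\cV^*=\{0\}$. The paper's route is more self-contained (commutant plus polar decomposition, no appeal to the functional calculus or to $\mu_\alpha$ until the end) and its kernel-positivity step has independent interest. The only places in your write-up that deserve a line of detail are routine: passing from $\int|\varphi|^2\,d\nu\le C\int|\varphi|^2\,d\mu_\alpha$ for $\varphi\in C(\T)$ to $\nu\le C\mu_\alpha$ via regularity of the measures, and the surjectivity of $\psi\mapsto h\psi$ from $L^2(\mu_\alpha)$ onto $L^2(\nu)$ (its inverse is multiplication by $g^{1/2}$).
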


\begin{proof}
The proof of Theorem \ref{rigTM} follows the proof of Theorem 2.2 in \cite{mypaper}. 

Let $\xi$ and $z$ denote the independent variables in $L^2(\mu)$ and  $L^2(\nu)$ respectively, and let $M_\xi:L^2(\mu)\to L^2(\nu)$ and  $M_z:L^2(\nu)\to L^2(\nu)$ be the multiplication operators by the independent variable in the corresponding spaces. We will also use symbols $\1_\xi$ and $\1_z$ to indicate that we consider the function $\1$ as an element of $L^2(\mu)$ or $L^2(\nu)$. 

The first step is to show that operator $\cV\cV^*$ commutes with $M_z$, i.e.
\begin{align}
\label{commutes}
\cV\cV^* M_z = M_z \cV\cV^*.
\end{align}

For $f=b\equiv\ID_\xi$, formula \eqref{repr-V} yields $\cV b= \cV\ID_\xi=\ID_z$.  Denote $b_1 = M^*_\xi$, $b_1(\xi)\equiv \overline \xi$. Applying \eqref{repr-V} to $f(\xi)= \xi^n$ and $f(\xi)=\xi^{n+1}$ and subtracting we get  for  $f(\xi)=\xi^n$, $n\in\Z$:
\begin{align*}
(M_z\cV-\cV M_\xi)f(z)
&=(1-\alpha)\int \frac{z\xi^n-\xi^{n+1}}{1-\bar\xi z} \dd\mu(\xi)
 =(\alpha-1)\int \xi^{n+1}\dd\mu(\xi)
 \\&=(\alpha-1) (\xi^n,\bar\xi)\ci{L^2(\mu)}\ID_z
= (\alpha-1) (f,b_1)\ci{L^2(\mu)}\cV b;
\end{align*}
here in the last equality we used the fact that $\cV b=\1_z$. 

By linearity we can extend this formula to polynomials and then using standard approximation reasoning extend it to all of $L^2(\mu)$. 
Moving $\cV M_\xi$ to the right hand side we rewrite the identity as
\begin{align}
\label{intertwining}
M_z\cV = \cV U_\alpha\qquad \text{where}\qquad U_\alpha:= M_\xi +(\alpha-1)b b_1^*
\quad \text{ on }L^2(\mu).
\end{align}
Taking the adjoint gives $\cV^*M_z=U_\alpha \cV^*$ and so we have:
$$
M_z\cV\cV^*=\cV U_\alpha \cV^* = \cV\cV^*M_z
$$
and obtained the desired commutation relation \eqref{commutes}.

Next we want to prove that 
\begin{align}
\label{kernel}
\Ker \cV^*=\{0\}.
\end{align}
Since $\Ker \cV^* =\Ker \cV\cV^*$, the commutation relation \eqref{commutes} implies that
the kernel $\Ker \cV^*$ is a spectral subspace of $M_z$. Namely, there exists a Borel subset $E\subset \T$ such that
\[
\Ker \cV^* = \{f \in L^2(\nu)\,:\, \1\ci{\T\setminus E} f =0\}.
\]

Assuming that $\Ker \cV^*\neq \{0\}$, i.e.~that $\nu(E)>0$, let us obtain a contradiction by constructing a function $f=\1\ci{E_1}$, $E_1\subset E$, $\nu(E_1)>0$ such that $f\notin\Ker \cV^*$.

By assumption $\supp\mu$ consists of at least two points. Therefore, there exists $\tau\in[0,\pi)$ such that for the open arcs $A_1:= \{e^{it}: t\in(\tau,\tau+\pi)\}$, $A_2:= \{e^{it}: t\in(\tau-\pi,\tau)\}$, we have $\mu(A_{1,2})>0$.

By the regularity of the measure $\mu$ there exist closed arcs $I_{1,2}\subset A_{1,2}$ such that $\mu(I_{1,2})>0$.

Since $\clos A_1\cup\clos A_2 =\T$, at least one of the conditions $\nu(E\cap\clos A_1)>0$ or $\nu(E\cap\clos A_2)>0$ holds. 

Assume for the definiteness that for $E_1:=E\cap\clos A_1$ we have $\nu(E_1)>0$. Then for $z\in E_1$, $\xi\in I_2$ we have 
\[
\re  (1-\overline\xi z )\ge \delta > 0
\]
(the worst case is when $\xi$ and $z$ are closest). Since $|1-\overline\xi z|\le 2$ we conclude that 
\[
\re \left( \frac{1}{1-\overline \xi z} \right)\ge \frac{\delta}{2^2}. 
\]
Then it follows from \eqref{repr-V} that for $f=\1\ci{E_1}$, $g=\1\ci{I_2}$ we have 
\begin{align*}
 |(g, \cV^* f )| &= |1-\alpha| \cdot \left| \int_{I_2}\int_{E_1}  \frac{1}{1-\overline\xi z} d\mu(\xi)d\nu(z) \right|
\\ & \ge |1-\alpha| \re \int_{I_2}\int_{E_1}  \frac{1}{1-\overline\xi z} d\mu(\xi)d\nu(z)
\ge |1-\alpha| \frac{\delta}{4} \mu(I_2)\nu(E_1) > 0, 
\end{align*}
so $f\notin \ker \cV^*$. We got a contradiction. The case $\nu(E\cap\clos A_2)>0$ is treated absolutely the same way. 

%
%
%

Thus, we have proved that $\Ker \cV^*= \{0\}$.

For the remainder of the proof we use standard operator theoretic tools.

Let us focus on the first part of the rigidity theorem. Writing polar decomposition, we have $\cV ^* = \widetilde U |\cV ^*|$, where, recall $|\cV^*|=(\cV\cV^*)^{1/2}$ and $\widetilde U$ is a partial isometry (meaning that $\wt U$ restricted to $\ran |\cV^*|=(\ker\cV^*)^\perp $ is an  
isometry). 
In virtue of the commutation relation \eqref{commutes} we obtain that $|\cV^*|M_z =M_z|\cV^*|$, so  $|\cV ^*|=M_{\psi}$ for some $\psi\in L^\infty(\nu)$. 

We proved that $\ker |\cV^*|=\ker \cV^* =\{0\}$, so $\psi\ne 0 $ $\nu$-a.e.  
Let $1/h=\psi\in L^\infty(\nu)$. Since $\widetilde U^* = M_{h}\cV $, we need to show that $\widetilde U$ is a unitary operator.

We proved that $\ker \cV^* =\{0\}$. 
We also know that $\wt U$ restricted to $(\Ker \cV)^\perp$ is an isometry, so $\wt U$ is an isometry. 
We assumed that $\ker\cV=\{0\}$, and since $h\ne 0$ $\nu$-a.e.~we conclude that $\ker \wt U^*=\{0\}$. Therefore $\wt U$ is a unitary operator.

The second part of the theorem is now almost trivial. The fact that $\wt U^*=M_h \cV:L^2(\mu)\to L^2(\nu)$ is unitary means that $\cV$ is a unitary operator $L^2(\mu) \to L^2(|h|^2\nu)$. 
Together with the intertwining relationship \eqref{intertwining} and  the fact that  $\cV b =\1$ it means mean that $|h|^2\nu =\mu_\alpha$ and $\cV:L^2(\mu)\to L^2(\mu_\alpha)$ is the spectral representation of $U_\alpha$ with respect to the cyclic vector $b$, i.e.~that $\cV=\cV_\alpha$.
\end{proof}

\begin{rem*}
The assumption $\ker \cV=\{0\}$ is essential in the above rigidity theorem. To see that 
consider a measure $\mu$ which is a finite linear combination of atoms, 
$\mu= \sum_{k=1}^n w_k \delta_{\xi_k}$, and let $U_1$ be the multiplication by $\xi$ in $L^2(\mu)$. Let $U_\alpha= U + (\alpha-1)b b_1^*$, $|\alpha|=1$ be the rank $1$ perturbation considered in the beginning of Section \ref{s:V_alpha}, and let $\mu_\alpha$ be the spectral measure of $U_\alpha$ with respect to the cyclic vector $b$. Then by Theorem \ref{t-repr-V} the operator $\cV_\alpha:L^2(\mu)\to L^2(\mu_\alpha)$ satisfying \eqref{ComRel-02} is given by \eqref{repr-V}. 

Note that $\mu_\alpha$ is also a linear combination of atoms, $\mu_\alpha= \sum_{k=1}^n v_k \delta_{z_k}$. Note also that the points $z_k$ are zeroes of the function $1-\theta_0(z)$, and if we perturb even one of these point the formula \eqref{repr-V} would not give us a bounded operator. 

However, if we just remove one (or more) of the points $z_k$ to get a measure $\nu$, then \eqref{repr-V} still defines a bounded operator $L^2(\mu)\to L^2(\nu)$. It is also clear from comparing the dimensions that this operator has a non-trivial kernel. Therefore, it cannot be made into a unitary operator by composing it with any operator.

It is also clear that $M_z$ in $L^2(\nu)$ cannot be unitarily equivalent to the unitary operator $U_\alpha$ because these operators have different ranks. 
\end{rem*}

\providecommand{\bysame}{\leavevmode\hbox to3em{\hrulefill}\thinspace}
\providecommand{\MR}{\relax\ifhmode\unskip\space\fi MR }
\providecommand{\MRhref}[2]{%
  \href{http://www.ams.org/mathscinet-getitem?mr=#1}{#2}
}
\providecommand{\href}[2]{#2}

\end{document}